\numberwithin{equation}{section}
\numberwithin{figure}{section}
\theoremstyle{plain}
\newtheorem{thm}{\protect\theoremname}[section]
  \theoremstyle{plain}
  \newtheorem{cor}[thm]{\protect\corollaryname}
  \theoremstyle{plain}
  \newtheorem{lem}[thm]{\protect\lemmaname}
  \theoremstyle{definition}
  \newtheorem{defn}[thm]{\protect\definitionname}
  \newtheorem{example}[thm]{\protect\examplename}
  \theoremstyle{remark}
  \newtheorem*{rem*}{\protect\remarkname}
  \theoremstyle{plain}
  \newtheorem{prop}[thm]{\protect\propositionname}
  \theoremstyle{remark}
  \newtheorem{rem}[thm]{\protect\remarkname}
\newcommand{\e}{\mathrm{e}}
\newcommand{\1}{1}
\newcommand{\N}{\mathbb{N}}
\newcommand{\R}{\mathbb{R}}
\newcommand{\C}{\mathbb{C}}
\renewcommand{\Pi}{\pi}
\renewcommand{\emptyset}{\varnothing}
\renewcommand{\hat}{\widehat}
\newcommand\diam{\mathrm{diam}}
\newcommand\id{\mathrm{id}}
\newcommand\Chat{\hat{\mathbb{C}}}
\newcommand\T{\mathrm{\mathcal{C}}}
\DeclareMathOperator*{\Span}{span}
\DeclareMathOperator*{\card}{card}
\renewcommand{\vec}[1]{\mathbf{#1}}
\DeclareMathOperator*{\Aut}{Aut}
\DeclareMathOperator*{\Hol}{\textnormal{H\"ol}}
\newcommand{\M}{\mathit{Q}}
\DeclareMathOperator*{\Rat}{Rat}
  \providecommand{\corollaryname}{Corollary}
  \providecommand{\definitionname}{Definition}
  \providecommand{\lemmaname}{Lemma}
  \providecommand{\propositionname}{Proposition}
  \providecommand{\remarkname}{Remark}
\providecommand{\theoremname}{Theorem}
  \providecommand{\examplename}{Example}
\begin{document}

\title{Pointwise H\"older Exponents of the Complex Analogues \\ of the Takagi Function in Random Complex Dynamics}

\author{Johannes Jaerisch and Hiroki Sumi}

\date{17th May 2017. Published in Adv. Math. 313 (2017) 839-874. \ {\em MSC2010}: 37H10, 37F15.}

\thanks{\ \newline \noindent Johannes Jaerisch \newline  Department of Mathematics, 
Faculty of Science and Engineering, Shimane University, Nishikawatsu 1060 
 Matsue, Shimane
 690-8504, Japan E-mail: jaerisch@riko.shimane-u.ac.jp\ \ Web: http://www.math.shimane-u.ac.jp/$\sim$jaerisch/ 
 \newline \newline   \noindent Hiroki Sumi  (corresponding author)  \newline 
 Course of Mathematical Science, 
 Department of Human Coexistence, Graduate School of Human and Environmental 
 Studies, Kyoto University, Yoshida-nihonmatsu-cho, Sakyo-ku, Kyoto,  
 606-8501, Japan\newline E-mail: sumi@math.h.kyoto-u.ac.jp\ \ Web: 
http://www.math.h.kyoto-u.ac.jp/$\sim$sumi/index.html
 }

\keywords{Complex dynamical systems, rational semigroups, random complex dynamics,
multifractal formalism, Julia set, random iteration.}
\vspace{-7mm} 
\begin{abstract}
We consider hyperbolic random complex dynamical systems on the Riemann sphere 
with separating condition and multiple minimal sets. 
We investigate the H\"older
regularity of the function $T$ of the probability of tending to one minimal set, 
the partial derivatives of $T$ with respect to the probability parameters, which can be regarded as complex analogues of the Takagi function, and the 
higher partial derivatives $C$ of $T.$   
Our main result gives a dynamical description of the pointwise
H\"older exponents of $T$ and $C$, which allows us to determine the spectrum of pointwise
H\"older exponents by employing the multifractal formalism in ergodic
theory. Also, we prove that the bottom 
of the spectrum $\alpha _{-}$ is strictly less than $1$, 
which allows us to show that the averaged system acts chaotically on the Banach space 
$C^{\alpha }$ of 
$\alpha $-$\Hol$der continuous functions for every $\alpha \in (\alpha _{-},1)$, 
though the averaged system behaves very mildly (e.g. we have spectral gaps)  on 
$C^{\beta }$ for small $\beta >0.$   
\end{abstract}

\maketitle
\vspace{-8mm} 
\section{Introduction and Statement of Results}
\label{Introduction}

In this paper, we consider random dynamical systems of  rational maps on the Riemann sphere 
$\Chat $. The study of random complex dynamics was initiated by J.E. Fornaess and N. Sibony 
(\cite{MR1145616}). 
There  are many new interesting phenomena in random dynamical systems, so called 
randomness-induced phenomena or noise-induced phenomena, which cannot hold in 
the deterministic iteration dynamics. For the motivations and recent research of random complex dynamical systems 
focused on the randomness-induced phenomena, see the second author's works 
\cite{s11random, S13Coop, rcddc, twogenerator}. 
In these papers it was shown that for a generic i.i.d. random dynamical system of complex polynomials 
of degree two or more,  
the system acts very mildly on the space of continuous functions on $\Chat$ 
and on the space $C^{\alpha }(\Chat )$ for small $\alpha \in (0,1)$,  
 where $C^{\alpha }(\Chat )$ denotes the 
Banach space of $\alpha $-H\"older continuous functions on $\Chat $ 
endowed with $\alpha $-H\"older norm, 
but under certain conditions the system still acts chaotically  
on the space $C^{\beta }(\Chat )$ for some $\beta \in (0,1)$ close to $1$.    
Thus, we investigate the gradation between chaos and order in random (complex) dynamical systems.   

In order to show the main ideas of the paper, let Rat denote the set of all non-constant rational maps on 
$\Chat. $ This is a semigroup whose semigroup operation is the composition of maps. 
Throughout the paper, 
let $s\ge1$ and let $(f_{1},\ldots, f_{s+1})\in (\mbox{Rat})^{s+1}$ with $\deg(f_{i})\geq 2,i=1,\ldots, s+1.$   
Let  $\vec{p}=(p_{1},\dots,p_{s})\in(0,1)^{s}$
with $\sum_{i=1}^{s}p_{i}<1$ and let  $p_{s+1}:= 1-\sum_{i=1}^{s}p_{i}$. 
We consider the (i.i.d.) random dynamical system on $\Chat$ such that 
at every step we choose $f_{i}$ with probability $p_{i}.$ 
This defines a Markov chain with state space $\Chat $ such that 
 for each $x\in \Chat $ and for each Borel measurable subset $A$ of $\Chat$, 
the transition probability $p(x,A)$ from $x$ to $A$  is equal to $\sum _{i=1}^{s+1}p_{i}1_{A}(f_{i}(x))$, where 
$1_{A}$ denotes the characteristic function of $A$.  
Let $G=\left\langle f_{1},\dots,f_{s},f_{s+1}\right\rangle $
be the rational semigroup (i.e., subsemigroup of Rat) generated by 
$\{f_{1},\ldots, f_{s+1}\}.$ More precisely, 
$G=\{ f_{\omega _{n}}\circ \cdots \circ f_{\omega _{1}}: 
n\in \Bbb{N}, \omega _{1},\ldots, \omega _{n}\in \{ 1,\ldots, s+1\} \}.$ 
We denote by $F(G)$ the 
maximal open subset of $\Chat $ 
on which $G$ is equicontinuous with respect to the spherical distance on $\Chat .$ 
The set $F(G)$ is called the Fatou set of $G$, and the set $J(G):=\Chat \setminus F(G)$ is called the Julia set of $G$. 
We remark that in order to investigate random complex dynamical systems, 
it is very important to investigate the dynamics of associated rational semigroups. 
The first study of dynamics of rational semigroups was conducted by A. Hinkkanen and G. J. 
Martin (\cite{MR1397693}), who were interested in the role of polynomial semigroups (i.e., 
semigroups of non-constant polynomial maps) while studying various one-complex-dimensional 
moduli spaces for discrete groups, and by F. Ren's group (\cite{MR1435167}), 
who studied such semigroups from the perspective of random dynamical systems. 
For the interplay of random complex dynamics and dynamics of rational semigroups, 
see \cite{MR1767945}--\cite{twogenerator}, \cite{sspb, su09, su13, JS13b, JStams}. \\ 
\vspace{-1.2mm} 
Throughout the paper, we assume the following. 
\vspace{-2.5mm} 
\begin{itemize}
\item[(1)] $G=\langle f_{1},\ldots, f_{s+1}\rangle $ is hyperbolic, i.e., 
we have $P(G)\subset F(G)$, where 
$$P(G):=\overline{\bigcup _{g\in G\cup \{ \id\} }g(\cup _{i=1}^{s+1}\{ \mbox{critical values of }
f_{i}:\Chat \rightarrow \Chat \} )}.
\mbox{ Here, the closure is taken in }\Chat .$$ 
\item[(2)] $(f_{1},\ldots, f_{s+1})$ satisfies the separating condition, i.e., 
$f_{i}^{-1}(J(G))\cap f_{j}^{-1}(J(G))=\emptyset $ whenever $i,j\in \{1,\ldots, s+1\}, i\neq j.$ 
\item[(3)] There exist at least two minimal sets of $G$. Here, 
a non-empty compact subset $K$ of $\Chat $ is called a minimal set of $G$ 
if $K=\overline{\cup _{g\in G}\{ g(z)\} }$ for each $z\in K.$   
\end{itemize}  
\vspace{-2.3mm} 
Note that by assumption (2), \cite[Lemma 1.1.4]{sumihyp} and \cite[Theorem 3.15]{s11random}, 
we have that 
there exist at most finitely many 
minimal sets of $G$.  Moreover, 
denoting by $S_{G}$ the union of minimal sets of $G$ and setting $I:=\{1,\dots, s+1\}$, we have that 
for each $z\in \Chat$ there exists a Borel subset $A_{z}$ of 
$I^\N$ with $\tilde{\rho }_{\vec{p}}(A_{z})=1$  such that $d(f_{\omega _{n}}\cdots f_{\omega _{1}}(z),S_{G})\rightarrow 0$ as $n\rightarrow \infty$ 
for all $\omega =(\omega _{i})_{i=1}^{\infty }\in A_{z}$, where  $\tilde{\rho }_{\vec{p}}:=\otimes _{n=1}^{\infty }\rho _{\vec{p}}$ denotes the product measure on  $I^\N$ given by  $\rho _{\vec{p}}:=\sum _{i=1}^{s+1}p_{i}\delta _{i}$ with  $\delta _{i}$ denoting  the 
Dirac measure concentrated at $i\in I$.

Throughout,  we fix a minimal set $L$ of $G$ 
(e.g. $L=\{ \infty \} $ when $G$ is a polynomial semigroup). 
Denote by $T_{\vec{p}}(z)$ the probability of tending to $L$ of
the process on $\Chat$ which starts in $z\in\Chat$ and which is
given by drawing independently with probability $p_{i}$ the map $f_{i}$. 
More precisely, 
$T_{\vec{p}}(z):= 
\tilde{\rho }_{\vec{p}}(\{ 
\omega =(\omega _{i})_{i=1}^{\infty }\in I^\N : 
d(f_{\omega _{n}}\circ \cdots \circ f_{\omega _{1}}(z), L)
\rightarrow 0 \mbox{ as }n\rightarrow \infty \} )$. 
It was shown by the second author in \cite{S13Coop} that, for each
$\vec{p}=(p_{1},\dots,p_{s})$ there exists $\alpha \in\left(0,1\right)$
such that $\vec{x}=(x_{1},\dots,x_{s})\mapsto T_{(x_{1},\dots,x_{s},1-\sum_{i=1}^{s}x_{i})}\in C^{\alpha}(\Chat)$
is real-analytic in a neighbourhood of $\vec{p}$, where $C^{\alpha}(\Chat)$
denotes the $\C$-Banach space of $\alpha$-H\"older continuous $\Bbb{C}$-valued functions 
on $\Chat$ endowed with $\alpha$-H\"older norm $\| \cdot \| _{\alpha}$ (Remark~\ref{rem:motivation}).  
Thus it is very natural and important  to consider the following. 
For $\N_{0}:=\N \cup \{ 0\} $ and $\vec{n}=\left(n_{1},\dots,n_{s}\right)\in\N_{0}^{s}$ we denote
by $C_{\vec{n}}\in C^{\alpha}(\Chat)$ the higher order partial derivative
of $T_{\vec{p}}$ of order $\left|\vec{n}\right|:=\sum_{i=1}^{s}n_{i}$
with respect to the probability parameters given by 
\[
C_{\vec{n}}(z):=\frac{\partial^{\left|\vec{n}\right|}T_{(x_{1},\dots,x_{s},1-\sum_{i=1}^{s}x_{i})}(z)}{\partial x_{1}^{n_{1}}\partial x_{2}^{n_{2}}\cdots\partial x_{s}^{n_{s}}}\Big|_{\vec{x}=\vec{p}},\quad z\in\Chat. 
\]
\foreignlanguage{english}{
These functions are introduced in \cite{S13Coop} by the second author.   
We introduce the $\C$-vector space 
\[
\T:=\Span\left\{ C_{\vec{n}}\mid\vec{n}\in\N_{0}^{s}\right\} \subset C^{a}(\Chat),
\]
which consists of all the finite complex linear combinations of elements from
$\left\{ C_{\vec{n}}\mid\vec{n}\in\N_{0}^{s}\right\} $. }The first
order derivatives are called complex analogues of the Takagi function
in \cite{S13Coop}. Note that $C_{0}=T_{\vec{p}}$. \\ 
\vspace{-2mm} 
For an element $C\in\T$ and $z\in\Chat$ the H\"older exponent \foreignlanguage{english}{$\Hol\left(C,z\right)$}
is given by 
\[
\Hol\left(C,z\right):=\sup\left\{ \alpha\in[0,\infty):\limsup_{y\rightarrow z,y\neq z}\frac{\left|C\left(y\right)-C\left(z\right)\right|}{d\left(y,z\right)^{\alpha}}<\infty\right\} \in\left[0,\infty\right],
\]
where $d$ denotes the spherical distance on $\Chat$. 
It was shown
in \cite{JS13b} that the level sets 
\vspace{-2mm} 
\[
H(C_{0},\alpha):=\{ z\in\Chat:\Hol(C_{0},z)=\alpha \} ,\quad\alpha\in\R,
\]
satisfy the multifractal formalism. In particular, 
there exists an interval of parameters $\left(\alpha_{-},\alpha_{+}\right)$
such that the Hausdorff dimension of $H(C_{0},\alpha)$ is positive
and varies real analytically on $(\alpha _{-}, \alpha _{+})$ (see Theorem \ref{thm:multifractal}
below). 

The first main result of this paper gives a dynamical description of the
pointwise H\"older exponents for an arbitrary $C\in\T$. We say that $C=\sum_{\vec{n}\in\N_{0}^{s}}\beta_{\vec{n}}C_{\vec{n}}\in {\mathcal C}$
is non-trivial if there exists $\vec{n}\in\N_{0}^{s}$ with $\beta_{\vec{n}}\neq0$.
It turns out in Theorem \ref{main-thm} below that every non-trivial
$C\in\T$ has the same pointwise H\"older exponents.
To state the result, we define
the skew product map (associated with $(f_{i})_{i\in I}$) 
(see \cite{MR1767945}) 
\[
\tilde{f}:I^{\N}\times\hat{\C}\rightarrow I^{\N}\times\hat{\C},\quad\tilde{f}\left(\omega,z\right):=\left(\sigma\left(\omega\right),f_{\omega_{1}}\left(z\right)\right),
\]
where $\sigma:I^{\N}\rightarrow I^{\N}$ denotes the shift map
given by $\sigma\left(\omega_{1},\omega_{2},\dots\right):=\left(\omega_{2},\omega_{3},\dots\right)$,
for $\omega=\left(\omega_{1},\omega_{2},\dots\right)\in I^{\N}$. 
For every $\omega =(\omega _{j})_{j\in \N }\in I^{\N}$ and $n\in \N $,  
let $f_{\omega |_{n}}:=f_{\omega _{n}}\circ \cdots \circ f_{\omega _{1}}$ and 
we denote by $F_{\omega }$ the maximal open subset of $\Chat$ 
on which $\{ f_{\omega |_{n}}\}_{n\in \N}$ is equicontinuous with respect to $d.$ Let $J_{\omega }:=\Chat \setminus F_{\omega }.$
The Julia set of $\tilde{f}$ is given by 
$J\left(\tilde{f}\right)=\overline{\cup _{\omega \in I^{\N}}\{ \omega \} \times J_{\omega }}$ 
where the closure is taken in the product space $I^{\N }\times \Chat $. 
Note that denoting by 
$\pi :I^{\N} \times \Chat \rightarrow \Chat $ the canonical projection, 
$\pi :J(\tilde{f})\rightarrow J(G)$ is a homeomorphism (\cite[Lemma 4.5]{s11random}, 
\cite[Lemma 1.1.4]{sumihyp} and 
assumption (2)) and $\pi \circ \tilde{f}=\sigma \circ \pi.$   
We introduce the potentials $\tilde{\varphi}, \tilde{\psi}:J(\tilde{f})\rightarrow\R$
given by 
\[
\tilde{\varphi}\left(\omega,z\right):=-\log\left\Vert f_{\omega_{1}}'\left(z\right)\right\Vert ,\quad\quad\tilde{\psi}(\omega,z):=\log p_{\omega_{1}}, 
\]
where $\| \cdot \| $ denotes the norm of the derivative with respect to the spherical metric on $\Chat.$ 
Note that $\tilde{f}^{-1}(J(\tilde{f}))=J(\tilde{f})=\tilde{f}(J(\tilde{f}))$ 
(\cite{MR1767945}). 
We denote by $S_{n}\tilde{u}$ the $n$-th ergodic sum 
$\sum _{j=0}^{n-1}\tilde{u}\circ \tilde{f}^{j}$
of the dynamical system $(J(\tilde{f}), \tilde{f})$ 
with respect to a function $\tilde{u}$ on $J(\tilde{f})$.
\selectlanguage{english}%
\begin{thm}
\label{main-thm} For every non-trivial 
$C=\sum_{\vec{n}\in\N_{0}^{s}}\beta_{\vec{n}}C_{\vec{n}}\in\T$ we
have 
\begin{equation}
\Hol\left(C,z\right)=\liminf_{k\rightarrow\infty}\frac{S_{k}\tilde{\psi}\left(\omega,z\right)}{S_{k}\tilde{\varphi}\left(\omega,z\right)},\quad\mbox{for all }(\omega,z)\in J(\tilde{f}).\label{eq:hoelder-is-quotientofbirkhoff}
\end{equation}

\end{thm}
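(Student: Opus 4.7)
The plan is to compare values of $C$ at two nearby points by iteratively applying the natural functional equation satisfied by every element of $\T$, and then to exploit hyperbolicity and the separating condition to pull the iterated identity back along the orbit of $(\omega,z)$. Write $\mathcal{M}_{\vec{p}}(h):=\sum_{i=1}^{s+1} p_i\, h\circ f_i$ for the transfer-type averaging operator on $C(\Chat)$. Differentiating the identity $T_{\vec{p}}=\mathcal{M}_{\vec{p}}(T_{\vec{p}})$ repeatedly in the probability parameters (using $p_{s+1}=1-\sum_{i\leq s}p_i$) yields, for each $\vec{n}\in\N_0^s$, a functional equation of the form
\begin{equation*}
(I-\mathcal{M}_{\vec{p}})C_{\vec{n}}=\Phi_{\vec{n}},
\end{equation*}
where $\Phi_{\vec{n}}$ is an explicit finite $\C$-linear combination of terms $C_{\vec{m}}\circ f_j-C_{\vec{m}}\circ f_{s+1}$ with $|\vec{m}|<|\vec{n}|$ and $\Phi_{\vec{0}}=0$. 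Iterating $k$ times along the orbit $\omega$ and reorganising, one obtains, for $(\omega,z)\in J(\tilde{f})$ and $y$ close to $z$, a local expansion
\begin{equation*}
C(y)-C(z)=p_{\omega|_k}\bigl(C(f_{\omega|_k}(y))-C(f_{\omega|_k}(z))\bigr)+E_k(y,z),
\end{equation*}
in which the error $E_k$ involves only elements of $\T$ of strictly lower order than $\max\{|\vec{n}|:\beta_{\vec{n}}\neq 0\}$, composed with various $f_i$'s. Here the separating condition guarantees that, among the $(s+1)^k$ symbolic branches, only the branch $\omega|_k$ contributes to the leading term with weight $p_{\omega|_k}$; all other branches are pushed off $J(G)$ and absorbed into $E_k$.

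\textbf{Bounds.} By hyperbolicity and separating condition, the inverse branch $g_{\omega|_k}$ of $f_{\omega|_k}$ with $g_{\omega|_k}(f_{\omega|_k}(z))=z$ is defined on a disk $D_k$ around $f_{\omega|_k}(z)$ of radius bounded below by some $\delta_0>0$, with bounded distortion $\|g_{\omega|_k}'\|\asymp \|(f_{\omega|_k})'(z)\|^{-1}$ on $D_k$. For the lower bound $\Hol(C,z)\geq \liminf_k S_k\tilde{\psi}(\omega,z)/S_k\tilde{\varphi}(\omega,z)$, given $y\in J(G)$ near $z$ I pick $k=k(y)$ maximal with $f_{\omega|_k}(y)\in D_k$ and $d(f_{\omega|_k}(y),f_{\omega|_k}(z))\asymp \delta_0$; bounded distortion then gives $d(y,z)\asymp \|(f_{\omega|_k})'(z)\|^{-1}$, and the local expansion together with an induction on $\max\{|\vec n|:\beta_{\vec n}\neq 0\}$ to control $E_k$ yields $|C(y)-C(z)|\lesssim p_{\omega|_k}$; taking logarithms produces the lower bound. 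For the upper bound $\Hol(C,z)\leq \liminf_k S_k\tilde{\psi}(\omega,z)/S_k\tilde{\varphi}(\omega,z)$, non-triviality of $C$ is used to produce a constant $c>0$, independent of $w\in J(G)$, and points $\xi(w)\in J(G)\cap B(w,\delta_0)$ with $|C(\xi(w))-C(w)|\geq c$. Applying this at $w=f_{\omega|_k}(z)$ and pulling $\xi$ back through $g_{\omega|_k}$ produces $y_k=g_{\omega|_k}(\xi)\in J(G)$ with $d(y_k,z)\asymp \|(f_{\omega|_k})'(z)\|^{-1}$ and, using the local expansion and the inductive control of $E_k$, $|C(y_k)-C(z)|\gtrsim p_{\omega|_k}$, giving the desired upper bound.

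\textbf{Main obstacle.} The hard step is the uniform oscillation lower bound on $\delta_0$-balls in $J(G)$ that non-triviality is supposed to yield; it is a rigidity assertion to the effect that no non-trivial element of $\T$ can fail to oscillate on $\delta_0$-balls at infinitely many points of $J(G)$. I plan to handle this by combining (i) the functional equations $(I-\mathcal{M}_{\vec{p}})C_{\vec{n}}=\Phi_{\vec{n}}$, (ii) the real-analyticity of $\vec{x}\mapsto T_{\vec{x}}\in C^\alpha(\Chat)$ at $\vec{p}$ from \cite{S13Coop}, and (iii) a minimality argument exploiting that the forward semigroup action on $J(G)$ is dense from every point, so that an oscillation present anywhere propagates to every $\delta_0$-ball. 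The careful bookkeeping of the error $E_k$ under induction on $|\vec{n}|$ is routine in principle but subtle in practice, because different basis vectors $\beta_{\vec{n}}C_{\vec{n}}$ could conceivably cancel at leading order; non-triviality is precisely what rules such cancellations out.
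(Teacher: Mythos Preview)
Your lower-bound argument is essentially the paper's (Lemma~\ref{lem:lowerbound}), with one correction: the bound one actually gets is $|C(y)-C(z)|\lesssim p_{\omega|_k}\cdot k^{|\vec n_{\max}|}$ rather than $\lesssim p_{\omega|_k}$, because the off-diagonal entries of the transfer cocycle grow polynomially in $k$. This extra factor is harmless for the $\liminf$ since $S_k\tilde\varphi\to-\infty$ linearly.

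The upper bound, however, has a genuine gap. In matrix form your local expansion reads
\[
C(y_k)-C(z)=p_{\omega|_k}\sum_{\vec m\le\vec n_{\max}}\lambda_{\vec m}(k)\,u_{\vec m}(a,b),\qquad \lambda_{\vec m}(k)=\sum_{\vec n}\beta_{\vec n}\,A(\omega,k)_{\vec n,\vec m},
\]
where $a=f_{\omega|_k}(y_k)$, $b=f_{\omega|_k}(z)$, and $A(\omega,k)$ is triangular with unit diagonal but off-diagonal entries of order up to $k^{|\vec n_{\max}|}$ (Lemma~\ref{lem:matrix-growth}). Your splitting into a ``main term'' $p_{\omega|_k}(C(a)-C(b))$ plus ``error'' $E_k$ isolates only the diagonal contribution; $E_k$ is of size $p_{\omega|_k}\cdot O(k^{|\vec n_{\max}|})$ and therefore typically \emph{dominates} the main term rather than being subleading. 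Hence a uniform oscillation bound $|C(\xi(w))-C(w)|\ge c$, even if it were available, does not yield $|C(y_k)-C(z)|\gtrsim p_{\omega|_k}$: the $k$-dependent coefficients $\lambda_{\vec m}(k)$ can rearrange the $u_{\vec m}(a,b)$ into something arbitrarily small for any \emph{single} choice of $(a,b)$. (Moreover, your uniform oscillation claim is essentially Corollary~\ref{cor:varies}, which in the paper is a \emph{consequence} of Theorem~\ref{main-thm}, not an input; that $C$ varies on $J(G)$ was the open problem the theorem settles.) The paper's route is different: near any accumulation point $x_0$ of $f_{\omega|_{j(k)}}(z)$ it constructs a \emph{family} of pairs $(a'_{\vec r},b'_{\vec r})$, using orbits with prescribed visit counts $m_i\asymp k^{(n+1)^{i-1}}$, so that the matrix $\bigl(u_{\vec q}(a'_{\vec r},b'_{\vec r})\bigr)_{\vec q,\vec r\le\vec n_{\max}}$ is invertible (Lemmas~\ref{lem:growth of solutions}, \ref{detlemma}, Proposition~\ref{prop:fullrank}). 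This full-rank property forces any subsequential limit of $\lambda(k)/\|\lambda(k)\|$ to be non-orthogonal to some $U(a,b)$, giving $\limsup_k\bigl|\sum_{\vec m}\lambda_{\vec m}(k)u_{\vec m}(a,b)\bigr|>0$ (Lemma~\ref{lem:eta-keylemma}) and hence the upper bound.
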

Combining Theorem \ref{main-thm} with our results from \cite[Theorem 1.2]{JS13b}
on the multifractal formalism, we establish the multifractal formalism
for the pointwise H\"older exponents of an arbitrary non-trivial $C\in\T $.
To state the results, 
for any non-trivial $C\in \T $ and $\alpha\in\R$ we denote
by 
\[
H\left(C,\alpha\right):=\{ y\in\Chat:\Hol\left(C,y\right)=\alpha\} 
\]
the level set of prescribed \foreignlanguage{british}{H\"older exponent
$\alpha$. \foreignlanguage{british}{The range of the multifractal spectrum is
given by 
\vspace{-1.4mm} 
\[
\alpha_{-}:=\inf\left\{ \alpha\in\R:H\left(C,\alpha\right)\neq\emptyset\right\} \in \R \quad\mbox{and }\quad\alpha_{+}:=\sup\left\{ \alpha\in\R:H\left(C,\alpha\right)\neq\emptyset\right\} \in \R.
\]
}
By Theorem }\ref{main-thm}, 
the sets $H\left(C,\alpha\right)$ coincide for all non-trivial $C\in\T$. Thus,  $\alpha _{-}$ and $\alpha _{+}$ do not depend on the choice of a non-trivial $C\in {\mathcal C}$. Also,   
 $\alpha_->0$ (\cite[Theorem 2.6]{MR1625124}, see also 
 Corollary~\ref{cor:alpha}).  
\begin{thm}[For the detailed statements, see Theorem \ref{thm:mf-for-hoelderexponent}]
\label{thm:multifractal}
All of the following hold. 
\vspace{-2.1mm} 
\begin{itemize}
\item[{\em (1)}]  Let $C\in {\mathcal C}$ be non-trivial. If $\alpha_{-}<\alpha_{+}$ then the Hausdorff dimension function
$\alpha\mapsto\dim_{H}\left(H\left(C,\alpha\right)\right)$, $\alpha\in\left(\alpha_{-},\alpha_{+}\right)$,
defines a real analytic and strictly concave positive function on
$\left(\alpha_{-},\alpha_{+}\right)$ with maximum value $\dim_{H}\left(J(G)\right)$.
If $\alpha_{-}=\alpha_{+}$, then we have 
$H\left(C,\alpha_{-}\right)=J(G).$  
\item[{\em (2)}]  We have $\alpha_{-}=\alpha_{+}$ if and only if there exist an automorphism
$\theta\in\Aut\bigl(\Chat\bigr)$, complex numbers $\left(a_{i}\right)_{i\in I}$
and $\lambda\in\R$ such that for all $i\in I$ and $z\in\Chat$,
\[
\theta\circ f_{i}\circ\theta^{-1}\left(z\right)=a_{i}z^{\pm\deg\left(f_{i}\right)}\quad\mbox{and}\quad\log\deg\left(f_{i}\right)=\lambda\log p_{i}.
\]
\end{itemize} 
\end{thm}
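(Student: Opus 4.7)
The plan is to reduce Theorem \ref{thm:multifractal} to a multifractal analysis of Birkhoff averages on the hyperbolic skew product $\tilde{f}:J(\tilde{f})\to J(\tilde{f})$, using Theorem \ref{main-thm} as the bridge between pointwise H\"older exponents and dynamics, and then invoke \cite[Theorem 1.2]{JS13b}.

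First, I would observe that by Theorem \ref{main-thm}, the level set $H(C,\alpha)$ does not depend on the choice of non-trivial $C\in\T$, so it suffices to treat $C=C_{0}=T_{\vec{p}}$. Via the canonical projection $\pi:J(\tilde{f})\to J(G)$, which is a homeomorphism satisfying $\pi\circ\tilde{f}=\sigma\circ\pi$, the identity \eqref{eq:hoelder-is-quotientofbirkhoff} translates the pointwise H\"older exponent $\Hol(C,\pi(\omega,z))$ into the Birkhoff quotient $\liminf_{k\to\infty} S_{k}\tilde{\psi}/S_{k}\tilde{\varphi}$ on $J(\tilde{f})$. Thus $\pi$ maps the level sets for the Birkhoff quotient of the two H\"older potentials $(\tilde{\psi},\tilde{\varphi})$ bijectively onto $H(C,\alpha)$; in particular Hausdorff dimensions coincide because $\pi$ is bi-H\"older on $J(\tilde{f})$ under hyperbolicity plus the separating condition.

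For part (1), the assumed hyperbolicity of $G$ together with the separating condition makes $\tilde{f}|_{J(\tilde{f})}$ a conformal expanding repeller in the fibre direction with H\"older potentials $\tilde{\varphi}<0$ and $\tilde{\psi}<0$. I would then quote \cite[Theorem 1.2]{JS13b} applied to the quotient of Birkhoff averages of $\tilde{\psi}$ over $\tilde{\varphi}$: the thermodynamic formalism gives a pressure function $t\mapsto P(t\tilde{\varphi}+q(t)(\tilde{\psi}-\alpha\tilde{\varphi}))=0$ whose Legendre transform yields the dimension spectrum. The standard machinery (Bowen's formula, real-analyticity of pressure in the hyperbolic setting) then delivers real analyticity, strict concavity, positivity, and the identity $\max_{\alpha}\dim_{H}H(C,\alpha)=\dim_{H}J(G)$ for $\alpha$ equal to the dimension-maximizing Birkhoff quotient. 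The case $\alpha_{-}=\alpha_{+}$ is handled by noting that then every $z\in J(G)$ has the same H\"older exponent, so $H(C,\alpha_{-})=J(G)$.

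For part (2), the degeneracy $\alpha_{-}=\alpha_{+}$ is equivalent, by the rigidity part of the thermodynamic formalism, to the existence of $\lambda\in\R$ such that $\tilde{\psi}-\lambda\tilde{\varphi}$ is cohomologous to a constant on $(J(\tilde{f}),\tilde{f})$. Because $\tilde{\psi}(\omega,z)=\log p_{\omega_{1}}$ depends only on $\omega_{1}$ while $\tilde{\varphi}(\omega,z)=-\log\|f_{\omega_{1}}'(z)\|$, a Liv\v{s}ic-type argument forces, on each fibre Julia set, $\|f_{i}'(z)\|$ to be constant along closed orbits with constant $p_{i}^{1/\lambda}$, i.e.\ $\log\deg(f_{i})=\lambda\log p_{i}$. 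The remaining step, which is the main obstacle, is to upgrade the cohomology condition on $\tilde\varphi$ alone to the geometric rigidity statement that each $f_i$ is conjugate by a single $\theta\in\Aut(\Chat)$ to $a_{i}z^{\pm\deg(f_{i})}$. Here I would use the classical result (e.g.\ Zdunik, Makarov--Smirnov, as adapted in \cite[Theorem 2.6]{MR1625124} and the rational-semigroup versions cited above) that a rational map whose Julia set carries a measure of maximal dimension with the "linear" property that $\log\|f'\|$ is cohomologous to a constant must be conformally conjugate to a power map or Chebyshev-type map; the separating condition plus hyperbolicity ensures that the conjugating M\"obius maps for the individual $f_{i}$ can be taken simultaneously, yielding the single $\theta$. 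Conversely, if the $f_{i}$ have the stated normal form then $\tilde{\varphi}$ is (up to a coboundary) constant on each symbol and the relation $\log\deg(f_{i})=\lambda\log p_{i}$ makes $\tilde{\psi}-\lambda\tilde{\varphi}$ cohomologous to a constant, forcing $\alpha_{-}=\alpha_{+}$.
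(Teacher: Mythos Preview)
Your approach is essentially the same as the paper's: reduce via Theorem~\ref{main-thm} to the multifractal analysis of the Birkhoff quotient $S_k\tilde\psi/S_k\tilde\varphi$ on the expanding skew product, and then invoke the results of \cite{JS13b}. The paper's own proof is in fact a one-line deferral to \cite[Theorem~5.3, Proposition~3.9, Lemma~5.1]{JS13b} (rather than Theorem~1.2), where the rigidity characterisation in part~(2) is already established in full; your additional sketch of the Liv\v{s}ic/Zdunik argument is therefore not needed, and contains a few imprecisions (the reference \cite[Theorem~2.6]{MR1625124} concerns expansion, not rigidity, and Chebyshev maps do not arise here since the conclusion is $a_i z^{\pm\deg f_i}$).
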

\vspace{-0.0mm} 
In the next theorem we determine the actual H\"older class of every non-trivial $C\in \mathcal C$.
\begin{thm}
\label{thm:c0holder}
For every non-trivial $C\in {\mathcal C}$ and for every $\alpha<\alpha _{-}$, 
the function $C$ is $\alpha$-H\"older continuous on $\Chat. $ Moreover, $C_{0}$ 
is $\alpha _{-}$-H\"older continuous on $\Chat .$ 
\end{thm}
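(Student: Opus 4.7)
The plan is to upgrade the pointwise H\"older-exponent characterisation of Theorem \ref{main-thm} to a uniform $\alpha$-H\"older estimate on $\Chat$. By linearity of $\mathcal{C}$, it suffices to prove the claim for each $C_{\vec{n}}$, and I would proceed by induction on $|\vec{n}|$, with base case $C_0 = T_{\vec{p}}$.

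Fix $\alpha < \alpha_{-}$. By Theorem \ref{main-thm}, $\liminf_k S_k\tilde{\psi}/S_k\tilde{\varphi} \ge \alpha_{-}$ pointwise on $J(\tilde f)$, and the uniform divergence $S_k\tilde{\varphi} \to -\infty$ from hyperbolicity then yields $\int(\tilde{\psi} - \alpha\tilde{\varphi})\,d\tilde{\mu} < 0$ for every $\tilde f$-invariant Borel probability measure $\tilde{\mu}$ on the compact set $J(\tilde f)$. A uniform Birkhoff sum bound (Ma\~n\'e's lemma) then produces $N_0 \in \N$ and $c>0$ with
\[
p_{\omega_1}\cdots p_{\omega_n} \le \|(f_{\omega|_n})'(z)\|^{-\alpha}\,e^{-cn}\quad\mbox{for all }n \ge N_0,\ (\omega,z) \in J(\tilde f).
\]
For $T_{\vec{p}}$, iterate the fixed-point equation $T_{\vec{p}} = \sum_{i} p_i T_{\vec{p}} \circ f_i$ to write $T_{\vec{p}}(z)-T_{\vec{p}}(w) = \sum_{|\omega|=n}p_{\omega_1}\cdots p_{\omega_n}(T_{\vec{p}}(f_{\omega|_n}(z)) - T_{\vec{p}}(f_{\omega|_n}(w)))$, and replace the fixed length $n$ by a stopping-time antichain: for each infinite word $\omega$, let $\tau_\omega$ be the first time at which $\|(f_{\omega|_{\tau}})'(z)\|\cdot d(z,w)$ crosses a fixed small threshold. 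Koebe distortion (valid on the compact hyperbolic Julia set with separating condition) bounds $d(f_{\omega|_{\tau_\omega}}(z), f_{\omega|_{\tau_\omega}}(w))$ uniformly, and the uniform bound above yields $p_{\omega|_{\tau_\omega}} \le K\,d(z,w)^{\alpha}\,e^{-c\tau_\omega}$. Summing over the antichain, with the factor $e^{-c\tau_\omega}$ absorbing the combinatorial growth of stopping positions (noting $\tau_\omega \gtrsim \log(1/d(z,w))$), yields $|T_{\vec{p}}(z) - T_{\vec{p}}(w)| \le K' d(z,w)^{\alpha}$ for $z,w \in J(G)$. Points of $F(G)$ pose no difficulty: hyperbolicity together with the separating condition makes $T_{\vec{p}}$ real-analytic in a neighbourhood of every $z\in F(G)$, so the global $\alpha$-H\"older estimate follows.

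For the inductive step, $C_{\vec{n}}$ satisfies $(I - M_{\vec{p}})C_{\vec{n}} = H_{\vec{n}}$ (obtained by differentiating $T_{\vec{p}} = M_{\vec{p}}T_{\vec{p}}$ with respect to the probability parameters, where $M_{\vec{p}}g := \sum_{i}p_i g \circ f_i$), with $H_{\vec{n}}$ depending only on $C_{\vec{m}}$ for $|\vec{m}| < |\vec{n}|$ and on $T_{\vec{p}} \circ f_i$. The induction hypothesis gives $H_{\vec{n}} \in C^{\alpha}(\Chat)$, and iterating $C_{\vec{n}} = M_{\vec{p}}C_{\vec{n}} + H_{\vec{n}}$ supplies a telescope to which the same stopping-time/Ma\~n\'e estimate applies, yielding $C_{\vec{n}} \in C^{\alpha}(\Chat)$.

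The principal obstacle is the boundary case $\alpha = \alpha_{-}$ for $C_0$. Here the strict inequality in Ma\~n\'e's lemma degenerates (some invariant measure achieves $\int(\tilde{\psi} - \alpha_{-}\tilde{\varphi})\,d\tilde{\mu} = 0$), so the safety factor $e^{-cn}$ is lost and the previous summation collapses. One must replace it by a finer analysis exploiting the exact identity $\sum_{v \in A} p_v = 1$ over any stopping antichain together with the genuine fixed-point property $T_{\vec{p}} = M_{\vec{p}}T_{\vec{p}}$ (no forcing term), arguing directly with the critical pressure $P(\tilde{\psi} - \alpha_{-}\tilde{\varphi}) = 0$. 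This fixed-point structure is unavailable for $C_{\vec{n}}$ with $|\vec{n}| \ge 1$, which is precisely why the theorem asserts $\alpha_{-}$-H\"older continuity only for $C_0$.
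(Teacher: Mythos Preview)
Your strategy for the range $\alpha<\alpha_-$ is sound and genuinely different from the paper's. You deduce the uniform bound $S_n(\tilde\psi-\alpha\tilde\varphi)\le -cn+O(1)$ from Ma\~n\'e's lemma, whereas the paper obtains the sharper inequality $S_n(\tilde\psi-\alpha_-\tilde\varphi)\le O(1)$ by a \emph{periodic-orbit closing} argument: it codes $J(\tilde f)$ by an irreducible subshift of finite type, closes any length-$km$ block to a periodic word $\overline{\omega\tau}$, uses that on a periodic point the Birkhoff ratio equals its (existing) limit and hence lies in $[\alpha_-,\alpha_+]$, and transfers this back via bounded distortion. One remark on your execution: the ``stopping-time antichain'' and ``combinatorial growth of stopping positions'' are misleading. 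For $z\in J(G)$ and $w$ close, the separating condition (Lemma~\ref{lem:e0}, Lemma~\ref{lem:matrix-formulas}) kills every branch except the unique $\omega$ with $z\in J_\omega$, so the identity is $u_{\vec 0}(z,w)=p_{\omega|_k}\,u_{\vec 0}(f_{\omega|_k}(z),f_{\omega|_k}(w))$ along a single path; there is no antichain to sum over, and your Ma\~n\'e bound (which is stated only for $(\omega,z)\in J(\tilde f)$) can then be applied directly. Your inductive organisation for higher $C_{\vec n}$ is fine and is equivalent to the paper's one-shot matrix-cocycle bound $|A(\omega,k)_{\vec n,\vec m}|\le Kk^{|\vec n|}$ from Lemma~\ref{lem:matrix-growth}.

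The genuine gap is the endpoint case $\alpha=\alpha_-$ for $C_0$. Here you correctly note that Ma\~n\'e's lemma degenerates: from $\sup_\mu\int(\tilde\psi-\alpha_-\tilde\varphi)\,d\mu=0$ one gets only $\tfrac{1}{n}S_n(\tilde\psi-\alpha_-\tilde\varphi)\to 0$ uniformly, not the $O(1)$ bound $S_n(\tilde\psi-\alpha_-\tilde\varphi)\le K$ that is actually needed. Your proposed repair via the identity $\sum_{v\in A}p_v=1$ and ``critical pressure'' does not produce that bound: after the separating-condition reduction above, the estimate $|C_0(z)-C_0(w)|\le 2\,p_{\omega|_k}$ along the single path $\omega$ requires precisely $p_{\omega|_k}\le K\,\|(f_{\omega|_k})'(z)\|^{-\alpha_-}$, i.e.\ the uniform $O(1)$ bound on $S_k(\tilde\psi-\alpha_-\tilde\varphi)$, and no probability-summation identity is available on a single path. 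The paper's periodic-closing argument supplies exactly this missing bound (equation~(\ref{equ:birkhoffquotients})), after which the $C_0$ case drops out because the polynomial prefactor $k^{|\vec n_{\max}|}$ in estimate~(\ref{eq:upperestimate}) is absent when $\vec n_{\max}=\vec 0$.
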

\selectlanguage{british}%

To prove  Theorem~\ref{thm:c0holder} 
we develop some ideas from 
\cite{kessenondifferentiabilityminkowski, MR2576266} for interval maps.  The relation between the H{\"o}lder continuity of singular  measures and their multifractal  spectra has  been first observed in \cite{kessenondifferentiabilityminkowski}, where it was shown that the H{\"o}lder continuity of the  Minkowski's question mark function coincides with the bottom of the Lyapunov spectrum of the Farey map. In \cite{MR2576266} a similar result has been obtained for expanding  interval maps.

In the following Theorem \ref{thm:less-than-one} we prove that $\alpha _{-}<1$. This result allows us to give a complete answer to two important problems raised in \cite{S13Coop}, which greatly improves the previous partial results in \cite{s11random, S13Coop, JS13b}.  The first implication is that,   under the assumptions of our paper, every non-trivial $C\in {\mathcal C}$ is not differentiable at every point of a Borel dense subset $A$ of $J(G)$ with $\dim _{H}(A)>0$. Secondly, we obtain in  Theorem~\ref{thm:alphachaos}  that the averaged system still acts chaotically  
on the space $C^{\alpha }(\Chat )$ for any $\alpha \in (\alpha _{-},1)$, although the averaged system acts very mildly on the Banach space $C(\Chat )$ 
of $\C $-valued continuous functions on $\Chat $ endowed with 
the supremum norm and on the Banach space $C^{\alpha }(\Chat )$ for small $\alpha >0$ (see \cite[Lemma 1.1.4]{sumihyp}, 
\cite[Theorem 3.15]{s11random} and \cite[Theorem 1.10]{S13Coop}). 
We recall  that if $\Hol(C,z)<1$ then $C$ is not differentiable at $z$. If $\Hol(C,z)>1$ then $C$ is differentiable at $z$ and 
the derivative of $C$ at $z$ is zero.
\begin{thm}\label{thm:less-than-one}
We  have $\alpha _{-}<1$.
Moreover, for every $\alpha \in (\alpha _{-}, \min\{\alpha_+,1\})$
there exists a Borel dense subset $A$ of $J(G)$
with $\dim _{H}(A)>0$ such that
for every non-trivial $C\in \T$ and for every $z\in A$,
we have $\Hol(C,z)=\alpha <1$ and $C$ is not differentiable at $z.$
\end{thm}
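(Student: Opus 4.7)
The plan splits into two parts: first establish $\alpha _{-}<1$, then use Theorems~\ref{main-thm} and~\ref{thm:multifractal} to produce the dense set $A$.

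\emph{Step 1: $\alpha _{-}<1$.} By Theorem~\ref{main-thm}, together with density of periodic orbits of $\tilde{f}$ in $J(\tilde{f})$ and the standard ergodic-theoretic characterisation of the infimum of Birkhoff-sum ratios, $\alpha _{-}=\inf _{\mu }\int \tilde{\psi }\,d\mu /\int \tilde{\varphi }\,d\mu $, where the infimum runs over $\tilde{f}$-invariant ergodic probability measures $\mu $ on $J(\tilde{f})$. Both integrals are strictly negative by uniform hyperbolicity (positive Lyapunov exponent on $J(\tilde{f})$), so $\alpha (\mu )<1$ is equivalent to $\int \log (p_{\omega _{1}}\|f'_{\omega _{1}}(z)\|)\,d\mu >0$. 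The plan is to exhibit such a $\mu $.

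The degenerate case $\alpha _{-}=\alpha _{+}$ of Theorem~\ref{thm:multifractal}(2) is first ruled out via the separating condition: simultaneous conjugacy to power maps $a_{i}z^{\pm \deg f_{i}}$ would force $J(G)$ to be a round circle with $f_{i}^{-1}(J(G))=J(G)$ for every $i$, violating assumption (2). Hence $\alpha _{-}<\alpha _{+}$. To upgrade this to $\alpha _{-}<1$, I would analyse the pressure function $t\mapsto P_{\tilde{f}}(t(\tilde{\psi }-\tilde{\varphi }))$, which is real analytic and strictly convex on $\R $ by standard thermodynamic formalism for hyperbolic skew products. Combining positivity of its value at $t=0$ (the topological entropy $h_{\mathrm{top}}(\tilde{f}|_{J(\tilde{f})})>0$) with the Bowen-sum representation summing $\prod _{k}p_{\omega _{k}}^{t}\|f'_{\omega _{k}}(z_{k})\|^{t}$ over periodic orbits, together with the rigidity exclusion above, yields an invariant measure $\mu $ with $\int (\tilde{\psi }-\tilde{\varphi })\,d\mu >0$, hence $\alpha (\mu )<1$ and so $\alpha _{-}<1$.

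\emph{Step 2: Construction of $A$.} Fix $\alpha \in (\alpha _{-},\min \{\alpha _{+},1\})$, a non-empty interval by Step~1. Set $A:=H(C,\alpha )$; by Theorem~\ref{main-thm} this set is independent of the choice of non-trivial $C\in \mathcal{C}$. Theorem~\ref{thm:multifractal}(1) gives $\dim _{H}A>0$. For Borel density in $J(G)$: since $S_{k}\tilde{\psi }$ and $S_{k}\tilde{\varphi }$ both diverge to $-\infty $ uniformly on $J(\tilde{f})$ by hyperbolicity, adding a finite additive correction to both numerator and denominator of the $\liminf$ does not change its value. Consequently, for $z\in A$ and $i\in I$, every $w\in f_{i}^{-1}(z)\cap J(G)$ satisfies $\Hol (C,w)=\Hol (C,z)=\alpha $, so $w\in A$. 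Iterating shows $\bigcup _{g\in G}g^{-1}(z)\cap J(G)\subset A$ for every $z\in A$, and since backward orbits of the hyperbolic semigroup $G$ are dense in $J(G)$, density of $A$ follows. Finally, $\Hol (C,z)=\alpha <1$ rules out differentiability of $C$ at $z$, since differentiability would yield $|C(y)-C(z)|=O(d(y,z))$, contradicting $\Hol (C,z)<1$.

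\emph{Main obstacle.} The strict inequality $\alpha _{-}<1$ in Step~1 is the heart of the theorem: Theorem~\ref{thm:multifractal}(2) rules out only $\alpha _{-}=\alpha _{+}$, and upgrading to strict separation from $1$ requires the pressure-theoretic machinery outlined above. Step~2 is then a straightforward combination of the multifractal formalism with the standard density of backward orbits.
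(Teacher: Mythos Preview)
Your Step 2 is fine and matches the paper's argument closely: take the level set $H(C,\alpha)$ (or a positive-dimensional subset of it supplied by Theorem~\ref{thm:multifractal}), observe it is backward $G$-invariant by Theorem~\ref{main-thm}, and use density of backward orbits in $J(G)$.

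Step 1, however, has a genuine gap. First, your rigidity argument is incorrect: the separating condition does \emph{not} rule out the degenerate case $\alpha_-=\alpha_+$. For instance, take $f_1(z)=z^{3}$ and $f_2(z)=e^{-2}z^{3}$ with $p_1=p_2=1/2$. In the coordinate $u=\log|z|$ the inverse branches form the IFS $u\mapsto u/3$, $u\mapsto (u+2)/3$, whose attractor is the middle-thirds Cantor set in $[0,1]$; the images $[0,1/3]$ and $[2/3,1]$ are disjoint, so assumption~(2) holds, and assumptions (1),(3) are immediate (critical values $\{0,\infty\}$ lie in $F(G)$; both $\{0\}$ and $\{\infty\}$ are minimal). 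Yet the hypotheses of Theorem~\ref{thm:multifractal}(2) are satisfied, so $\alpha_-=\alpha_+$. Thus $J(G)$ is not a single circle here and $f_i^{-1}(J(G))\neq J(G)$.

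Second, and more importantly, even granting $\alpha_-<\alpha_+$ your pressure argument does not deliver $\alpha_-<1$. You need to produce a measure with $\int(\tilde\psi-\tilde\varphi)\,d\mu>0$, but knowing only that $P(0)=h_{\mathrm{top}}(\tilde f)>0$ and that $t\mapsto P(t(\tilde\psi-\tilde\varphi))$ is convex says nothing about the sign of $\sup_\mu\int(\tilde\psi-\tilde\varphi)\,d\mu=\lim_{t\to\infty}P'(t)$. No mechanism is given; the conclusion is simply asserted.

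The paper's proof of $\alpha_-<1$ takes a completely different, analytic route and crucially uses Theorem~\ref{thm:c0holder}. One argues by contradiction: if $\alpha_-\ge 1$ then $C_0=T_{\vec p}$ is Lipschitz on $\hat{\C}$. Since $\dim_H J(G)<2$ under (1)(2), $J(G)$ has zero planar Lebesgue measure; by Fubini there is a straight segment $S$ joining a Fatou component meeting $L$ (where $C_0\equiv 1$) to one meeting another minimal set (where $C_0\equiv 0$) with $\mathrm{Leb}_1(S\cap J(G))=0$. On $S$ the Lipschitz function $C_0$ is absolutely continuous, but its derivative vanishes a.e.\ (locally constant on $F(G)$, and $S\cap J(G)$ is null), forcing $C_0$ to be constant on $S$ --- contradicting $C_0=1$ at one end and $C_0=0$ at the other. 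This argument uses the specific interpretation of $C_0$ as a hitting probability and the geometric fact $\dim_H J(G)<2$; it is not a purely thermodynamic statement about $\tilde\psi,\tilde\varphi$.
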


The proof of Theorem \ref{thm:less-than-one} will be postponed to Section \ref{section:pfl-t-o}.  
In the proof, we combine the result that $C_{0}$ is $\alpha _{-}$-$\Hol$der continuous on $\Chat $
 (Theorem~\ref{thm:c0holder}),  
 the multifractal analysis on the pointwise $\Hol$der exponents of $C_{0}$ 
(Theorems~\ref{thm:multifractal} and \ref{thm:mf-for-hoelderexponent}), 
an argument on  Lipschitz functions on $\C $  
and the fact that  $\dim _{H}(J(G))<2$, which follows from our assumptions (1) and (2) (\cite{MR1625124}).  
 
 To state Theorem~\ref{thm:alphachaos}, 
 let $M:C(\Chat )\rightarrow C(\Chat)$ be the transition operator 
 of the system which is defined by 
 $M(\phi )(z)=\sum _{j=1}^{s+1}p_{j}\phi (f_{j}(z))$, 
 where $\phi \in C(\Chat ), z\in \Chat.$ Note that 
 $M(C^{\alpha }(\Chat ))\subset C^{\alpha }(\Chat )$ for any $\alpha \in (0,1].$ 
\begin{thm}
\label{thm:alphachaos}
Let $\alpha \in (\alpha _{-},1)$ and let 
 $\phi \in C^{\alpha }(\Chat )$ such that
$\phi |_{L}=1$ and $\phi|_{L'}=0$ for every  minimal set 
$L' $ of $G$ with $L'\neq L$. Then $\| M^{n}(\phi )\| _{\alpha }
\rightarrow \infty $ as $n\rightarrow \infty .$ In particular, 
for every $\xi \in C^{\alpha }(\Chat )$ and for every $a\in \C \setminus \{  0\} $, 
we have $\| M^{n}(\xi +a\phi )-M^{n}(\xi )\| _{\alpha }\rightarrow \infty $ as 
$n\rightarrow \infty .$ 
\end{thm}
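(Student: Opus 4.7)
The plan is to show that $M^{n}(\phi )$ converges pointwise on $\Chat $ to $T_{\vec{p}}=C_{0}$, and then to derive a contradiction from the assumption that $\|M^{n}(\phi )\|_{\alpha }$ stays bounded along a subsequence, by exploiting that $C_{0}$ fails to be globally $\alpha $-H\"older when $\alpha >\alpha _{-}$.

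First I would establish pointwise convergence $M^{n}(\phi )(z)\to T_{\vec{p}}(z)$ for every $z\in \Chat $. By definition,
\[
M^{n}(\phi )(z)=\int _{I^{\N }}\phi (f_{\omega |_{n}}(z))\,d\tilde{\rho }_{\vec{p}}(\omega ).
\]
Our hyperbolicity and separating assumptions together with \cite[Lemma~1.1.4]{sumihyp} and \cite[Theorem~3.15]{s11random} ensure that for each $z\in \Chat $ and $\tilde{\rho }_{\vec{p}}$-a.e.\ $\omega $, the orbit $f_{\omega |_{n}}(z)$ accumulates on $S_{G}$. Since the minimal sets are finitely many disjoint compacta and, under hyperbolicity, each minimal set $L'$ admits a forward-$G$-invariant neighbourhood disjoint from the other minimal sets (using $g(L')\subset L'$ for every $g\in G$), almost every orbit is eventually trapped in one such neighbourhood and converges to the corresponding minimal set. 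Combined with continuity of $\phi $ and the hypotheses $\phi |_{L}=1$, $\phi |_{L'}=0$ for $L'\neq L$, this yields $\phi (f_{\omega |_{n}}(z))\to \mathbf{1}\{\text{orbit approaches }L\}$ almost surely. Bounded convergence together with the definition of $T_{\vec{p}}$ then gives $M^{n}(\phi )(z)\to T_{\vec{p}}(z)=C_{0}(z)$.

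Next I would argue by contradiction. If $\|M^{n}(\phi )\|_{\alpha }$ does not tend to $\infty $, then along some subsequence $(n_{k})$ there is $K<\infty $ with $\|M^{n_{k}}(\phi )\|_{\alpha }\leq K$, so
\[
|M^{n_{k}}(\phi )(x)-M^{n_{k}}(\phi )(y)|\leq K\, d(x,y)^{\alpha }\quad \text{for all }x,y\in \Chat .
\]
Letting $k\to \infty $ and using pointwise convergence yields $|C_{0}(x)-C_{0}(y)|\leq K\, d(x,y)^{\alpha }$ for all $x,y\in \Chat $, i.e., $C_{0}\in C^{\alpha }(\Chat )$. On the other hand, since $\alpha >\alpha _{-}$, the definition of $\alpha _{-}$ supplies some $\alpha '\in [\alpha _{-},\alpha )$ and a point $z\in H(C_{0},\alpha ')$; at this $z$, $\Hol(C_{0},z)=\alpha '<\alpha $, which forces $\limsup _{y\to z}|C_{0}(y)-C_{0}(z)|/d(y,z)^{\alpha }=\infty $ and contradicts the global $\alpha $-H\"older bound on $C_{0}$. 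Hence $\|M^{n}(\phi )\|_{\alpha }\to \infty $.

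The second assertion follows from linearity of $M$: $M^{n}(\xi +a\phi )-M^{n}(\xi )=aM^{n}(\phi )$, hence $\|M^{n}(\xi +a\phi )-M^{n}(\xi )\|_{\alpha }=|a|\cdot \|M^{n}(\phi )\|_{\alpha }\to \infty $ for every $\xi \in C^{\alpha }(\Chat )$ and every $a\in \C \setminus \{0\} $. The main subtlety in this outline is the first step, where a.s.\ accumulation on $S_{G}$ must be upgraded to a.s.\ convergence to a single minimal set; this rests on the forward-invariance under every $f_{j}$ of suitable neighbourhoods of each minimal set, which follows from hyperbolicity and the relation $g(L')\subset L'$ for all $g\in G$ and every minimal set $L'$.
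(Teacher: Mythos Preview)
Your argument is correct and follows essentially the same strategy as the paper: show that $M^{n}(\phi)\to C_{0}$, suppose for a contradiction that the $\alpha$-H\"older norms stay bounded along a subsequence, pass to the limit to conclude $C_{0}\in C^{\alpha}(\Chat)$, and derive a contradiction from $\alpha>\alpha_{-}$. The only difference is that the paper simply cites \cite{s11random} for the (uniform) convergence $C_{0}=\lim_{n\to\infty}M^{n}(\phi)$ in $C(\Chat)$, whereas you reprove pointwise convergence directly via almost-sure convergence of orbits to minimal sets and dominated convergence; pointwise convergence is already enough for the contradiction step, so your version works just as well.
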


\begin{proof} 
Recall from \cite{s11random} that $C_{0}=\lim_{n\rightarrow \infty}M^{n}(\phi )$ in $C(\Chat )$. Suppose for a contradiction that  there exist a subsequence $(n_{j})$ and a constant $K>0$ such that
$| M^{n_{j}}(\phi )(x)-M^{n_{j}}(\phi )(y)|\leq Kd(x,y)^{\alpha }$ for all $j,x,y.$ 
Letting $j\rightarrow \infty $ we have  $C_{0}\in C^{\alpha }(\Chat )$. But, this would imply that $\alpha_-\ge \alpha$ which is a contradiction.
\end{proof}
We now present the corollaries of our main results. The first one establishes that every non-trivial $C\in \T$ varies precisely on the Julia set $J(G)$. This follows immediately from Theorem \ref{main-thm} because  the
right-hand side of (\ref{eq:hoelder-is-quotientofbirkhoff}) is  
always finite (\cite[Theorem 2.6]{MR1625124}, see also 
Corollary~\ref{cor:alpha}). This generalizes a previous result from  \cite{s11random} for $C_{0}=T_{\vec{p}}$ and a partial result for the higher order partial derivatives from \cite{S13Coop}.  
\begin{cor}
\label{cor:varies}
Every non-trivial $C\in\T$ varies precisely on $J(G)$, i.e., $J(G)$ is equal to the set of points $z_{0}\in \Chat$ 
such that $C$ is not constant in any neighborhood of $z_{0}$ 
in $\Chat .$ In particular, 
the functions $C_{\vec{n}}, \vec{n}\in \N_{0}^{s}$, are linearly independent over $\C $ and 
$\T$ has a representation as a direct
sum of vector spaces given by 
\[
\T=\bigoplus_{\vec{n}\in\N_{0}^{s}}\C C_{\vec{n}}.
\]
\end{cor}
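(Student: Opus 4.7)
My plan is to deduce the corollary immediately from Theorem~\ref{main-thm}, combined with the well-known local behaviour of $T_{\vec{p}}$ on the Fatou set and the nonemptiness of $J(G)$.

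For the Julia-set inclusion, I would fix any $z\in J(G)$, lift it through the homeomorphism $\pi:J(\tilde{f})\to J(G)$ to some $(\omega,z)\in J(\tilde{f})$, and apply Theorem~\ref{main-thm} to obtain
\[
\Hol(C,z)=\liminf_{k\to\infty}\frac{S_{k}\tilde{\psi}(\omega,z)}{S_{k}\tilde{\varphi}(\omega,z)}.
\]
The finiteness of this quotient, recalled in the statement of the corollary, follows from hyperbolicity via \cite[Theorem~2.6]{MR1625124} (see also Corollary~\ref{cor:alpha}): hyperbolicity gives a uniform lower bound on $|S_{k}\tilde{\varphi}|/k$ and a trivial uniform upper bound on $|S_{k}\tilde{\psi}|/k$ by $\max_{i}|\log p_{i}|$. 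Since a function that is constant on a neighbourhood of $z$ has H\"older exponent $+\infty$ at $z$, this forces $C$ to be non-constant in every neighbourhood of every $z\in J(G)$.

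For the Fatou-set inclusion, I would invoke that $T_{\vec{p}}$ is locally constant on each connected component of $F(G)$ (\cite{s11random}), a standard consequence of hyperbolicity and assumption~(2). Since $\vec{x}\mapsto T_{(x_{1},\dots,x_{s},1-\sum_{i=1}^{s}x_{i})}$ is real-analytic as a map into $C^{\alpha}(\Chat)$ near $\vec{p}$ (\cite{S13Coop}), local constancy in the $z$-variable is preserved by taking partial derivatives with respect to the parameters, so every $C_{\vec{n}}$, and hence every finite complex linear combination $C\in\T$, is locally constant on $F(G)$. Combining the two inclusions gives the precise characterisation of the set on which $C$ varies.

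The linear independence then falls out: if $\sum_{\vec{n}}\beta_{\vec{n}}C_{\vec{n}}\equiv 0$ with some $\beta_{\vec{n}}\neq 0$, this would be a non-trivial element of $\T$ that is globally constant, contradicting the Julia-set part applied to any $z\in J(G)$, which is nonempty because $\deg(f_{i})\ge 2$. The direct-sum decomposition $\T=\bigoplus_{\vec{n}\in\N_{0}^{s}}\C\,C_{\vec{n}}$ is then immediate. No step is a serious obstacle; the only mildly delicate point is the transfer of Fatou-component constancy from $T_{\vec{p}}$ to every $C_{\vec{n}}$, but this is immediate from the real-analytic parameter dependence that is already built into the definition of the $C_{\vec{n}}$.
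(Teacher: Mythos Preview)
Your proposal is correct and follows essentially the same approach as the paper, which simply asserts that the corollary is immediate from Theorem~\ref{main-thm} together with the finiteness of the right-hand side of \eqref{eq:hoelder-is-quotientofbirkhoff}. You have merely spelled out in more detail the Fatou-set half (local constancy of $T_{\vec p}$ and its parameter derivatives on $F(G)$, which the paper also uses in the proof of Lemma~\ref{lem:matrix-formulas}) and the linear-independence deduction, both of which the paper leaves implicit.
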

We remark again that  $0<\dim _{H}(J(G))<2$  (\cite{MR1625124}). 
  
By combining  Theorem \ref{main-thm} with Birkhoff's ergodic theorem
we obtain the following extension of \cite[Theorem 3.40 (2)]{S13Coop}. Recall that a Borel probability measure $\nu$ on $J(\tilde{f})$ is called $\tilde{f}$-invariant if $\nu(\tilde{f}^{-1}(A))=\nu(A)$ for every Borel set  $A\subset J(\tilde{f})$. 
\begin{cor}\label{cor:ergodictheorem}
Let $\nu$ be an $\tilde{f}$-invariant ergodic Borel probability  measure on
$J\left(\tilde{f}\right)$. 
Let $\pi:I^{\N}\times \Chat \rightarrow \Chat$ denote the canonical projection onto $\Chat$. 
Then there exists a Borel subset $A$ of $J(G)$ with 
$(\pi _{\ast }(\nu ))(A)=1$ such that 
for every non-trivial $C\in\T$ and for every $z\in A$, we
have 
\[
\Hol\left(C,z\right)=\frac{-\int\log p_{\omega_{1}}d\nu(\omega,x)}{\int\log\Vert f_{\omega_{1}}'(x)\Vert d\nu(\omega,x)},  
\mbox{\ \ where }
\omega=\left(\omega_{1},\omega_{2},\dots\right)\in I^{\N}. 
\] 

\end{cor}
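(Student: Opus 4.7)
The plan is to combine Theorem~\ref{main-thm} with the Birkhoff ergodic theorem for the ergodic $\tilde{f}$-invariant measure $\nu$, applied to the two potentials $\tilde{\varphi}$ and $\tilde{\psi}$ on the compact set $J(\tilde{f})$. As a preliminary step I would check that both potentials are continuous, and in particular $\nu$-integrable. For $\tilde{\psi}(\omega,z)=\log p_{\omega_{1}}$ this is immediate since the function depends only on $\omega_{1}$, which takes values in the finite set $I$. For $\tilde{\varphi}(\omega,z)=-\log\|f_{\omega_{1}}'(z)\|$ this uses hyperbolicity: assumption~(1) together with the separating condition (2) yields the standard uniform expansion on $J(\tilde{f})$, namely constants $c>0$ and $\lambda>1$ with $\|(f_{\omega|_{n}})'(z)\|\ge c\lambda^{n}$ for every $n\in\N$ and every $(\omega,z)\in J(\tilde{f})$ (cf.~\cite{MR1625124, sumihyp}). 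In particular, $\|f_{\omega_{1}}'(z)\|$ stays bounded above and away from $0$ on $J(\tilde{f})$, so $\tilde{\varphi}$ is continuous and bounded.

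Next I would apply Birkhoff's ergodic theorem to obtain a Borel set $\tilde{A}\subset J(\tilde{f})$ with $\nu(\tilde{A})=1$ on which $\tfrac{1}{k}S_{k}\tilde{\psi}\to\bar{\psi}:=\int\tilde{\psi}\,d\nu$ and $\tfrac{1}{k}S_{k}\tilde{\varphi}\to\bar{\varphi}:=\int\tilde{\varphi}\,d\nu$ as $k\to\infty$. Using the uniform expansion above together with $\tilde{f}$-invariance of $\nu$, one obtains $\bar{\varphi}\le-\log\lambda<0$, so the ratios $S_{k}\tilde{\psi}(\omega,z)/S_{k}\tilde{\varphi}(\omega,z)$ converge to $\bar{\psi}/\bar{\varphi}$ for every $(\omega,z)\in\tilde{A}$. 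Consequently the $\liminf$ in~(\ref{eq:hoelder-is-quotientofbirkhoff}) equals this common limit, and Theorem~\ref{main-thm} then yields, for every non-trivial $C\in\T$ and every $(\omega,z)\in\tilde{A}$,
\[
\Hol(C,z)=\frac{\bar{\psi}}{\bar{\varphi}}=\frac{-\int\log p_{\omega_{1}}\,d\nu(\omega,x)}{\int\log\|f_{\omega_{1}}'(x)\|\,d\nu(\omega,x)}.
\]

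Finally, since $\pi:J(\tilde{f})\to J(G)$ is a homeomorphism under assumption~(2), the set $A:=\pi(\tilde{A})$ is a Borel subset of $J(G)$ satisfying $(\pi_{*}\nu)(A)=\nu(\pi^{-1}(A))=\nu(\tilde{A})=1$, and every $z\in A$ admits a unique lift $(\omega,z)\in\tilde{A}$ at which the identity above applies. The only genuine obstacle is verifying $\bar{\varphi}\ne 0$, i.e.\ the positivity of the Lyapunov exponent of $\nu$; once this is in hand via hyperbolicity and the separating condition, the rest is a direct application of Birkhoff's theorem and the dynamical identification afforded by Theorem~\ref{main-thm}.
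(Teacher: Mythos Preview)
Your proof is correct and follows exactly the approach indicated in the paper, which simply states that the corollary is obtained ``by combining Theorem~\ref{main-thm} with Birkhoff's ergodic theorem.'' You have supplied the details the paper omits---integrability of the potentials, positivity of the Lyapunov exponent via hyperbolicity, and the passage from $J(\tilde{f})$ to $J(G)$ via the homeomorphism $\pi$---all of which are standard and correctly handled.
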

By combining Corollary~\ref{cor:ergodictheorem} with 
\cite[Theorem 3.82]{s11random} in which the potential theory was used, 
we obtain the following result (Corollary~\ref{cor:nondiff}) on 
the pointwise $\Hol$der exponents and the non-differentiability of elements
of $\T$. To state the result, when $G$ is a polynomial semigroup, 
we denote by $\tilde{\mu}_{\vec{p}}$ the maximal relative
entropy measure on $J(\tilde{f})$ for $\tilde{f}$ with respect to $(\sigma,\tilde{\rho}_{\vec{p}}$)  (see \cite{MR1767945}, \cite[Remark 3.79]{s11random}).
Note that $\tilde{\mu }_{\vec{p}}$ is $\tilde{f}$-invariant and ergodic (\cite{MR1767945}).  
 Let $\mu_{\vec{p}}=\pi_{*}(\tilde{\mu}_{\vec{p}})$.
For any $(\omega ,z)\in I^{\Bbb{N}}\times \Chat$, 
let ${\mathcal G}_{\omega }(z):=
\lim_{n\rightarrow \infty }(1/\deg (f_{\omega |_{n}}))\log 
^{+}|f_{\omega |_{n}}(z)|$, where 
$\log ^{+}(a):=\max \{ \log a, 0\} $ for every $a>0.$ 
  By the argument in \cite{Sesskew}, we have that 
  ${\mathcal G}_{\omega }(y)$
   exists for every $(\omega ,z)\in I^{\Bbb{N} }\times \C $,
   $(\omega ,z)\in I^{\Bbb{N}}\times \C \mapsto {\mathcal G}_{\omega }(z)$ is continuous on $I^{\Bbb{N}}\times \C $,  
   ${\mathcal G}_{\omega }$ is  subharmonic  on $\C $ 
   and $ {\mathcal G}_{\omega }$ restricted to 
   the intersection of $\C $ and the basin $A_{\infty ,\omega }$ of $\infty $ for 
   $\{ f_{\omega |_{n}}\} _{n=1}^{\infty }$ is the Green's function 
   on $A_{\infty ,\omega }$ with pole at $\infty .$  
   Let $\Lambda (\omega )=\sum _{c}{\mathcal G}_{\omega }(c)$, 
   where $c$ runs over all critical points of $f_{\omega _{1}}$ in 
   $A_{\infty, \omega } $, counting multiplicities.  
   Note that $\mu _{\vec{p}}=\int _{I^{\N }}dd^{c}{\mathcal G}_{\omega }
   d\tilde{\rho }_{\vec{p}}(\omega )$ where $d^{c}=(\sqrt{-1}/2\pi )
   (\overline{\partial}-\partial)$ (\cite[Lemma 5.51]{s11random}), 
      supp$\,\mu _{\vec{p}}=J(G)$ and $\mu _{\vec{p}}$ is 
non-atomic (\cite{MR1767945}). Also, 
we have 
$\dim _{H}(\mu _{\vec{p}})=$ 
$(\sum _{i\in I}p_{i}\log \deg f_{i}-\sum _{i\in I}p_{i}\log p_{i})
/(\sum _{i\in I}p_{i}\log \deg f_{i}+\int _{I^{\Bbb{N}}}\Lambda (
\omega )d\tilde{\rho }_{\vec{p}}(\omega ))>0$ (\cite[Proof of Theorem 3.82]{s11random}). Here,  
$\dim _{H}(\mu _{\vec{p}}):=
\inf \{ \dim _{H}(A)\} $ where the infimum is taken over all 
 Borel subsets $A$ of  $J(G)$ with $ \mu _{\vec{p}}(A)=1.$    
 \begin{cor}
\label{cor:nondiff}
{\em (1) } Suppose that $f_{1},\ldots, f_{s+1}$ are polynomials. 
Then 
there exists a Borel dense subset $A$ of 
$J(G)$ with $\mu _{\vec{p}}(A)=1$ and 
$\dim _{H}(A)\geq (\sum _{i\in I}p_{i}\log \deg f_{i}-\sum _{i\in I}p_{i}\log p_{i})
/(\sum _{i\in I}p_{i}\log \deg f_{i}+\int _{I^{\Bbb{N}}}\Lambda (
\omega )d\tilde{\rho }_{\vec{p}}(\omega ))$ $>0$ 
  such that 
for every non-trivial $C\in \T$ and for every $z\in A$, we have 
$$ \Hol (C,z)= 
\frac{-\sum _{i\in I}p_{i}\log p_{i}}
{\sum _{i\in I}p_{i}\log \deg f_{i}+\int _{I^{\Bbb{N}}}
\Lambda (\omega )d\tilde{\rho }_{\vec{p}}(\omega )}. 
$$ 
{\em (2)} Suppose that $f_{1},\dots,f_{s+1}$ are polynomials satisfying
at least one of the following conditions: 
\begin{enumerate}
\item[\foreignlanguage{english}{(a)}] 
$\sum_{i\in I}p_{i}\log\left(p_{i}\log f_{i}\right)>0$. 
\item[\foreignlanguage{english}{(b)}] 
$G=\left\langle f_{1},\dots,f_{s+1}\right\rangle $
is postcritically bounded, i.e. $P(G)\setminus\left\{ \infty\right\} $
is bounded in $\C$. 
\item[\foreignlanguage{english}{(c)}] \textup{$s=1$. }
\end{enumerate}
Then there exists a Borel dense subset $A$ of $J(G)$ with $\mu _{\vec{p}}(A)=1$ such that for every non-trivial $C\in\T$ and for every 
$z\in A$, we have $\Hol\left(C,z\right)<1.$
In particular, every non-trivial $C\in\T$
is non-differentiable $\mu_{\vec{p}}$-almost everywhere on $J(G)$. 
\end{cor}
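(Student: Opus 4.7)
The overall plan is to apply Corollary~\ref{cor:ergodictheorem} to the ergodic $\tilde{f}$-invariant measure $\nu=\tilde{\mu}_{\vec{p}}$, to evaluate the two integrals appearing in the resulting formula for $\Hol(C,z)$ explicitly, and then, for part (2), to check in each of the three listed sub-cases that the resulting constant value is strictly less than one.

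For part (1) I first use that the projection of $\tilde{\mu}_{\vec{p}}$ onto $I^{\N}$ equals $\tilde{\rho}_{\vec{p}}$, which is built into the construction of $\tilde{\mu}_{\vec{p}}$ as the maximal relative entropy measure over $(\sigma,\tilde{\rho}_{\vec{p}})$; since $\tilde{\psi}(\omega,z)=\log p_{\omega_{1}}$ depends only on $\omega_{1}$, this immediately gives $-\int\log p_{\omega_{1}}\,d\tilde{\mu}_{\vec{p}}=-\sum_{i\in I}p_{i}\log p_{i}$. For the Lyapunov integral $\int\log\|f_{\omega_{1}}'(x)\|\,d\tilde{\mu}_{\vec{p}}$ I invoke the potential-theoretic computation in the proof of \cite[Theorem~3.82]{s11random}: converting the spherical derivative to the Euclidean one on $\C$, using the identity $\mu_{\vec{p}}=\int dd^{c}\mathcal{G}_{\omega}\,d\tilde{\rho}_{\vec{p}}(\omega)$, and applying Jensen's formula to each $\mathcal{G}_{\omega}$ on $A_{\infty,\omega}$, one identifies this Lyapunov integral with $\sum_{i\in I}p_{i}\log\deg f_{i}+\int\Lambda(\omega)\,d\tilde{\rho}_{\vec{p}}(\omega)$. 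Plugging both identities into Corollary~\ref{cor:ergodictheorem} produces the stated value of $\Hol(C,z)$ on the set $A$ provided by ergodicity. The lower bound $\dim_{H}(A)\geq\dim_{H}(\mu_{\vec{p}})$ is immediate from $\mu_{\vec{p}}(A)=1$ and the definition of $\dim_{H}(\mu_{\vec{p}})$, while density of $A$ in $J(G)$ follows from $\supp\mu_{\vec{p}}=J(G)$.

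For part (2) write $H_{p}:=-\sum_{i\in I}p_{i}\log p_{i}$, $\chi_{0}:=\sum_{i\in I}p_{i}\log\deg f_{i}$, and $\chi:=\chi_{0}+\int\Lambda\,d\tilde{\rho}_{\vec{p}}$, so the constant value produced by (1) is $v=H_{p}/\chi$, and the task is to show $v<1$. Case (a) is immediate: $\sum p_{i}\log(p_{i}\deg f_{i})>0$ rearranges to $H_{p}<\chi_{0}\leq\chi$. For case (b) I use the forward-invariance $f_{j}(P(G))\subset P(G)$ together with the boundedness of $P(G)\setminus\{\infty\}$ in $\C$: for every finite critical point $c$ of $f_{\omega_{1}}$ one obtains by induction $f_{\omega|_{n}}(c)\in P(G)\setminus\{\infty\}$ for every $n\geq 1$, so $c\notin A_{\infty,\omega}$; hence $\Lambda\equiv 0$, $\chi=\chi_{0}$, the dimension formula collapses to $\dim_{H}(\mu_{\vec{p}})=1+v$, and $\dim_{H}(\mu_{\vec{p}})\leq\dim_{H}(J(G))<2$ (\cite{MR1625124}) yields $v<1$. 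For case (c) with $s=1$ the bounds $H_{p}\leq\log 2\leq\chi_{0}\leq\chi$ give $v\leq 1$; equality would force $p_{1}=p_{2}=1/2$, $\deg f_{1}=\deg f_{2}=2$, and $\int\Lambda\,d\tilde{\rho}_{\vec{p}}=0$, whence $\dim_{H}(\mu_{\vec{p}})=2$, again contradicting $\dim_{H}(J(G))<2$. The final non-differentiability assertion is then the standard fact, recalled just before Theorem~\ref{thm:less-than-one}, that $\Hol(C,z)<1$ implies $C$ is not differentiable at $z$.

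The main obstacle I anticipate is the Lyapunov identification $\int\log\|f_{\omega_{1}}'\|\,d\tilde{\mu}_{\vec{p}}=\chi_{0}+\int\Lambda\,d\tilde{\rho}_{\vec{p}}$, which is the reason the statement is restricted to polynomial semigroups: the correction term $\int\Lambda\,d\tilde{\rho}_{\vec{p}}$ encodes precisely the contributions of the random Green functions at those finite critical points of each $f_{\omega_{1}}$ that escape to $\infty$ under random iteration, and it has to be extracted from the potential-theoretic computation of \cite{s11random}. Once this identity is available, the case analysis in (2) is a short book-keeping exercise, the only delicate point being the use of $\dim_{H}(J(G))<2$ in the boundary cases (b) and (c).
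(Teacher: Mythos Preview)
Your approach is correct and is essentially the one the paper takes: the paper simply states that the corollary follows by combining Corollary~\ref{cor:ergodictheorem} with \cite[Theorem~3.82]{s11random}, and you have spelled out precisely that combination, including the identification of the Lyapunov integral with $\chi_{0}+\int\Lambda\,d\tilde{\rho}_{\vec{p}}$ and the case-by-case verification for (2). Your handling of cases (b) and (c) via the dimension bound $\dim_{H}(\mu_{\vec{p}})\leq\dim_{H}(J(G))<2$ is exactly the intended mechanism.
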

Note that if we assume that every $f_{i}$ is a polynomial and $P(G)\setminus \{ \infty \} $ 
is bounded in $\C $, then 
$\Lambda (\omega )=0$ for every $\omega \in I^{\Bbb{N}}$, 
thus Corollary~\ref{cor:nondiff} 
implies that 
there exists a Borel dense subset $A$ of $J(G)$ with 
$$\mu _{\vec{p}}(A)=1, \ \dim _{H}(A)\geq 1+\frac{-\sum _{i\in I}p_{i}\log p_{i}}{\sum _{i\in I}
p_{i}\log \deg (f_{i})}>1$$ 
such that for every non-trivial $C\in \T$ and 
for every point $z\in A$, we have 
$$\Hol(C,z)= \frac{-\sum _{i\in I}p_{i}\log p_{i}}{\sum _{i\in I}
p_{i}\log \deg (f_{i})}<1.$$ 
The following is one of the other important applications of 
Corollary~\ref{cor:ergodictheorem}. 
In order to state the result, let $\delta :=\dim _{H}(J(G))$ and 
let $H^{\delta }$ denote the $\delta $-dimensional Hausdorff measure 
on $\Chat .$ Note that by \cite{MR2153926}, we have 
$0<H^{\delta }(J(G))<\infty .$ 
Let $C(J(G))$ be the space of all continuous $\C $-valued 
functions on $\Chat $ endowed with supremum norm. 
Let $L:C(J(G))\rightarrow C(J(G))$ be the operator 
defined by 
$L(\varphi )(z)=\sum _{i\in I}\sum _{f_{i}(y)=z}\phi (y)
\| f_{i}'(y)\| ^{-\delta }$ where $\phi \in C(J(G)), z\in J(G).$ 
By \cite{MR2153926} again,  we have that 
$\gamma =\lim _{n\rightarrow \infty }L^{n}(1)$ $\in C(J(G))$ exists, 
where $1$ denotes the constant function on $J(G)$ 
taking its value $1$, the function 
$\gamma $ is positive on $J(G)$, and 
there exists an $\tilde{f}$-invariant ergodic probability measure 
$\tilde{\nu }$ on $J(\tilde{f})$ such that 
$\pi _{\ast }(\tilde{\nu })=\gamma H^{\delta }/H^{\delta }(J(G))$ 
and supp$\,\pi _{\ast }(\nu )=J(G).$  
 By Corollary~\ref{cor:ergodictheorem} and 
 \cite[Theorem 3.84 (5)]{s11random}, we obtain the following.
\begin{cor} 
\label{cor:Hmeas}
Under the above notations, there exists a Borel dense subset 
$A$ of $J(G)$ with $H^{\delta }(A)=H^{\delta }(J(G))>0$ 
such that for every non-trivial $C\in {\mathcal C}$ and 
for every $z\in A$, we have 
$$ \Hol (C,z)=\frac{-\sum _{i\in I}\log p_{i}
\int _{f_{i}^{-1}(J(G))}\gamma (y) dH^{\delta }(y)}
{\sum _{i\in I}\int _{f_{i}^{-1}(J(G))}\gamma (y)\log \| f_{i}'(y)\| dH^{\delta }(y)}. 
$$  
 \end{cor}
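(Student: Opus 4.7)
The plan is to apply Corollary~\ref{cor:ergodictheorem} to the special $\tilde f$-invariant ergodic measure $\tilde\nu$ supplied by the setup, and then rewrite the two integrals appearing in the Hölder formula as sums of integrals over the pieces $f_i^{-1}(J(G))$, using that $\pi\colon J(\tilde f)\to J(G)$ is a homeomorphism and $\pi_*(\tilde\nu)=\gamma H^\delta/H^\delta(J(G))$.

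First I would invoke Corollary~\ref{cor:ergodictheorem} with $\nu=\tilde\nu$. This yields a Borel set $A\subset J(G)$ with $\pi_*(\tilde\nu)(A)=1$ on which the pointwise Hölder exponent of every non-trivial $C\in\mathcal C$ equals
\[
\frac{-\int\log p_{\omega_{1}}\,d\tilde\nu(\omega,x)}{\int\log\lVert f_{\omega_{1}}'(x)\rVert\,d\tilde\nu(\omega,x)}.
\]
The heart of the computation is to recognize that, by assumption~(2) and the homeomorphism $\pi\colon J(\tilde f)\to J(G)$, the sets $\pi(\{\omega\in I^{\N}:\omega_{1}=i\}\times\hat{\C}\cap J(\tilde f))$, $i\in I$, form a Borel partition of $J(G)$ that coincides with $\{f_i^{-1}(J(G))\}_{i\in I}$. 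Therefore for any bounded Borel function $u\colon I\times\hat{\C}\to\R$,
\[
\int_{J(\tilde f)}u(\omega_{1},x)\,d\tilde\nu(\omega,x)=\sum_{i\in I}\int_{f_i^{-1}(J(G))}u(i,z)\,d\pi_*(\tilde\nu)(z)=\frac{1}{H^{\delta}(J(G))}\sum_{i\in I}\int_{f_i^{-1}(J(G))}u(i,z)\gamma(z)\,dH^{\delta}(z).
\]
Applying this with $u(i,z)=-\log p_{i}$ in the numerator and $u(i,z)=\log\lVert f_i'(z)\rVert$ in the denominator, the common factor $1/H^\delta(J(G))$ cancels and we obtain precisely the ratio in the statement.

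For the measure-theoretic assertions on $A$: since $\gamma$ is strictly positive and continuous on the compact set $J(G)$ with $0<H^\delta(J(G))<\infty$, the measure $\pi_*(\tilde\nu)$ is equivalent to $H^{\delta}|_{J(G)}$; consequently $\pi_*(\tilde\nu)(A)=1$ forces $H^{\delta}(J(G)\setminus A)=0$, hence $H^{\delta}(A)=H^{\delta}(J(G))$. Since $\mathrm{supp}\,\pi_*(\tilde\nu)=J(G)$, any non-empty relatively open subset of $J(G)$ has positive $\pi_*(\tilde\nu)$-measure and therefore meets $A$, so $A$ is dense in $J(G)$.

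There is no serious obstacle here: the construction of $\tilde\nu$ and the formula $\pi_*(\tilde\nu)=\gamma H^\delta/H^\delta(J(G))$ are provided by \cite[Theorem 3.84 (5)]{s11random}, and the only point requiring care is the identification $\pi(\{\omega_{1}=i\}\cap J(\tilde f))=f_i^{-1}(J(G))$, which relies on the separating condition~(2) to guarantee disjointness of the sets $f_i^{-1}(J(G))$ and on $\pi|_{J(\tilde f)}$ being a bijection onto $J(G)$. Once this disintegration is in place, the corollary is a purely formal consequence of Corollary~\ref{cor:ergodictheorem}.
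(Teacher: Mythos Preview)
Your proposal is correct and follows exactly the route the paper indicates: the paper simply states that the corollary follows from Corollary~\ref{cor:ergodictheorem} together with \cite[Theorem 3.84 (5)]{s11random}, and you have filled in precisely those details (the disintegration over $f_i^{-1}(J(G))$ via the homeomorphism $\pi|_{J(\tilde f)}$, the substitution $\pi_*(\tilde\nu)=\gamma H^\delta/H^\delta(J(G))$, and the equivalence of $\pi_*(\tilde\nu)$ with $H^\delta|_{J(G)}$ to get full measure and density of $A$).
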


\begin{rem}We remark that a non-trivial $C\in\T$ may possess points of differentiability. In fact, by choosing one of the probability parameters sufficiently small, we can deduce from Corollary  \ref{cor:Hmeas} 
that for every non-trivial $C\in {\mathcal C}$ 
and for $H^{\delta }$-almost every $z\in J(G)$,  we have  $\Hol\left(C,z\right)>1$, $C$ is differentiable at $z$ and 
the derivative of $C$ at $z$ is zero.  
Note that even under the above condition, 
Theorem~\ref{thm:less-than-one} implies that 
there exist an $\alpha <1$ and a dense subset $A$ of 
$J(G)$ with $\dim _{H}(A)>0$ such that 
for every non-trivial $C\in \C $ and for every $z\in A$, 
we have $\Hol (C,z)=\alpha <1$ and $C$ is not differentiable at $z.$ 
In particular, in this case,  we have $\alpha _{-}<1<\alpha _{+}$ 
and we have a different kind of phenomenon 
regarding the (complex) analogues of the Takagi function, 
whereas the original Takagi function does not have this property. 

\end{rem}
We also have the following corollary of Theorem~\ref{main-thm}. 
To state the result, by \cite[Theorem 2.6]{MR1625124}
there exists $k_{0}\in \N $ such that 
for every $k\geq k_{0}$ and for every 
$\omega =(\omega _{i})_{i=1}^{k}\in I^{k}$, 
we have $\min _{z\in f_{\omega }^{-1}(J(G))}\| 
f_{\omega }'(z)\| >1$, where 
$f_{\omega }=f_{\omega _{k}}\circ \cdots \circ f_{\omega _{1}}.$ 
Let $p_{\omega }:=p_{\omega _{k}}\cdots p_{\omega _{1}}$ 
for $\omega =(\omega _{i})_{i=1}^{k}\in I^{k}.$  
\begin{cor}
\label{cor:alpha}
For every $k\geq k_{0}$, 
we have 
$$0<\min _{\omega \in I^{k}}\frac{-\log p_{\omega }}
{\log \max _{z\in f_{\omega }^{-1}(J(G))}\| f_{\omega }'(z)\|}
\leq \alpha _{-}\leq \alpha _{+}\leq 
\max _{\omega \in I^{k}}\frac{-\log p_{\omega }}
{\log \min _{z\in f_{\omega }^{-1}(J(G))}
\| f_{\omega }'(z)\| }<\infty .$$ 
In particular, if 
$p_{i}\min _{z\in f_{i}^{-1}(J(G))}\| f_{i}'(z)\| >1$ for every $i\in I$, then 
 for every non-trivial $C\in \T$ and for every $z\in J(G)$, 
 we have that $\Hol(C,z)\leq \alpha _{+}<1$ and $C$ is not differentiable at $z.$  

\end{cor}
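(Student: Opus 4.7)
My plan is to combine Theorem~\ref{main-thm} with a blockwise sandwich estimate on the ratio of Birkhoff sums. By the chain rule the sums telescope,
\[
S_n\tilde\psi(\omega,z)=\log p_{\omega|_n},\qquad S_n\tilde\varphi(\omega,z)=-\log\|f_{\omega|_n}'(z)\|,
\]
so Theorem~\ref{main-thm} identifies $\Hol(C,z)$ with $\liminf_{n\to\infty}(-\log p_{\omega|_n})/\log\|f_{\omega|_n}'(z)\|$ for every $(\omega,z)\in J(\tilde f)$. Denoting the lower and upper quantities in the corollary by $m_k$ and $M_k$, I would fix $k\geq k_0$, write $n=mk+r$ with $0\leq r<k$, cut $\omega|_{mk}$ into $m$ consecutive blocks $\omega_j^{*}\in I^k$, and apply multiplicativity of $p_{\omega|_{mk}}$ together with the chain rule at the intermediate points $y_j:=f_{\omega|_{(j-1)k}}(z)$.

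The key step is to argue that each $y_j$ lies in $f_{\omega_j^{*}}^{-1}(J(G))$. This is where assumption~(2) enters: via \cite[Lemma 4.5]{s11random} the separating condition gives $J(\tilde f)\subset I^{\N}\times J(G)$, and $\tilde f$-invariance of $J(\tilde f)$ then places both $y_j$ and $f_{\omega_j^{*}}(y_j)=y_{j+1}$ in $J(G)$. Consequently each block ratio $(-\log p_{\omega_j^{*}})/\log\|f_{\omega_j^{*}}'(y_j)\|$ lies in $[m_k,M_k]$, with both endpoints strictly positive and finite since $k\geq k_0$ forces $\min_z\|f_\omega'(z)\|>1$ for all $\omega\in I^k$. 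An elementary weighted-mean inequality (if all $a_j/b_j\in[m_k,M_k]$ with $b_j>0$, the same is true of $\sum a_j/\sum b_j$) produces the sandwich along the subsequence $n=mk$. The trailing $r<k$ letters contribute only an $O(1)$ perturbation to numerator and denominator while the $mk$-denominator tends to $+\infty$, so the bracketing survives in the full liminf, yielding $m_k\leq\Hol(C,z)\leq M_k$ for every $z\in J(G)$ and hence $m_k\leq\alpha_-\leq\alpha_+\leq M_k$.

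For the \emph{in particular} statement the hypothesis immediately gives $\min_z\|f_i'(z)\|>1/p_i>1$ for every $i\in I$, so $k=1\geq k_0$ is admissible in the main inequality. Taking logarithms of the hypothesis yields $(-\log p_i)/\log\min_z\|f_i'(z)\|<1$ for every $i$, hence $M_1<1$, so $\alpha_+\leq M_1<1$. Non-differentiability of every non-trivial $C\in\T$ at every $z\in J(G)$ then follows from the standard fact, recalled in the paper just before Theorem~\ref{thm:less-than-one}, that $\Hol(C,z)<1$ precludes differentiability. I do not anticipate any serious difficulty beyond the brief verification that the iterates $y_j$ remain in $J(G)$, which is where the separating assumption plays its role; everything else is routine manipulation of logarithms and block sums.
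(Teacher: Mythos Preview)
Your argument is correct and is exactly the routine verification the paper has in mind when it states this as an unproved corollary of Theorem~\ref{main-thm}. Two small cosmetic remarks: the inclusion $J(\tilde f)\subset I^{\N}\times J(G)$ (equivalently $J_\omega\subset J(G)$) is elementary and does not need the separating condition---a subfamily of an equicontinuous family is equicontinuous---so your appeal to \cite[Lemma~4.5]{s11random} is stronger than necessary; and for the ``in particular'' clause, your claim that $k_0=1$ is admissible is justified, but note that it requires checking the expansion property for \emph{all} $k\ge 1$, which follows from the $k=1$ case by the chain rule together with backward invariance $f_{\omega_j}^{-1}(J(G))\subset J(G)$ to ensure each intermediate point $y_j$ lies in $f_{\omega_j}^{-1}(J(G))$.
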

\begin{rem}
\label{rem:norm}
Under assumptions (1)(2)(3), suppose that the maps $f_{i},i\in I, $ are polynomials. 
Then $J(G)\subset \C $. Since the spherical metric and the Euclidian metric are 
equivalent on $J(G)$, it follows that we can replace $\| \cdot \| $ in 
the definition of $\varphi $, Corollaries~\ref{cor:ergodictheorem}, \ref{cor:Hmeas}, 
\ref{cor:alpha} by the modulus $| \cdot |$. 
\end{rem}

\begin{rem}
The function $C_{0}=T_{\vec{p}}$ is continuous (in fact, it is H\"older continuous) on $\Chat$ and 
varies precisely on the Julia set $J(G)$. 
Note that by assumptions (1)(2) and \cite{MR1625124}, we have 
that $J(G)$ is a fractal set with 
$0<\dim _{H}(J(G))<2$.  
The function $C_{0}$ can be interpreted as a complex analogue of the devil's staircase and Lebesgue's singular functions (\cite{s11random}). In fact, 
the devil's staircase is equal to the restriction to $[0,1]$ 
of the function of probability of tending to $+\infty $ when 
we consider random dynamical system on $\Bbb{R}$ such that at every step we choose 
$f_{1}(x)=3x$ with probability $1/2$ and we choose $f_{2}(x)=3x-2$ with probability $1/2.$  
Similarly, Lebesgue's singular function $L_{p}$ 
with respect to the parameter $p\in (0,1), p\neq 1/2$ is equal to the restriction to $[0,1]$ of 
the function of probability of tending to $+\infty $ when we consider random dynamical system 
on $\Bbb{R}$ such that at every step we choose $g_{1}(x)=2x$ with probability $p$ and 
we choose $g_{2}(x)=2x-1$ with probability $1-p.$ 
Note that these are new interpretations of 
the devil's staircase and Lebesgue's singular functions 
obtained in \cite{s11random} by the second author of this paper. 
Similarly, it was pointed out by him that 
the distributional functions of self-similar measures of IFSs of orientation-preserving 
contracting diffeomorphisms $h_{i}$ on $\R $ can be interpreted as the 
functions of probability of tending to $+\infty $ regarding the random dynamical  
systems generated by $(h_{i}^{-1})$ (\cite{s11random}). 
From the above point of view, 
when $G$ is a polynomial semigroup and $L=\{ \infty \} $, 
we call $C_{0}=T_{\vec{p}}$ a devil's coliseum (\cite{s11random}). 
It is well-known (\cite{YHK}) that the function 
$\frac{1}{2}\frac{\partial L_{p}(x)}{\partial p}|_{p=1/2}$ on $[0,1]$ is
equal to the Takagi function 
$\Phi (x)=\sum _{n=0}^{\infty }\frac{1}{2^{n}}\min _{m\in \Bbb{Z}}|2^{n}x-m|$ (also referred to as the Blancmange function), which is a 
famous example of a continuous but nowhere differentiable function on $[0,1]$. 
From this point of view, the first derivatives $C\in {\mathcal C}$ can be interpreted as 
complex analogues of the Takagi function. 
The devil's staircase, Lebesgue's singular functions, the Takagi function and the 
similar functions have been investigated so long in fractal geometry and the related fields. 
In fact, the graphs of these functions have certain kind of self-similarities and 
these functions have many interesting and deep properties.  
There are many interesting studies about 
the original Takagi function and its related topics (\cite{AKsurvey}).   
In \cite{AKTakagi}, many interesting results (e.g. continuity and non-differentiability, $\Hol$der order, the Hausdorff dimension of the graph, 
the set of points where the functions take on their absolute maximum and minimum values)  
of the higher order partial derivatives 
$\frac{\partial ^{n}L_{p}(x)}{\partial p^{n}}|_{p=1/2}$ 
of $L_{p}(x)$ 
with respect to $p$ are obtained. 
The first study of  the complex analogues of the Takagi function was given by the second author 
in \cite{S13Coop}. In particular, some partial results on the pointwise $\Hol$der exponents 
of them were obtained (\cite[Theorem 3.40]{S13Coop}).  
However, it had been an open problem whether the complex analogues of the Takagi function 
vary precisely on the Julia set or not, until this paper was written.   
The results of this paper greatly improve the above results from \cite{S13Coop}. 
In the proofs of the results of this paper,  we use completely new 
ideas and 
systematic approaches which are explained below.    
For the figures of the Julia set $J(G)$ and the graphs 
of $C_0$ and $C_{1}$ which we deal with in this paper when $s=1$, $G$ is a polynomial semigroup and 
$L=\{ \infty \}$, see 
\cite{s11random, S13Coop}. 
\end{rem}
\begin{rem}
The results on the classical Takagi function on  $[0,1]$ give some evidence that the    results  stated in   
Theorem~\ref{thm:c0holder} are sharp. 
Indeed, let us consider the function $L_{1/2}$ and 
$\phi _{n}(x)=\frac{\partial ^{n}L_{p}(x)}{\partial p^{n}}|_{p=1/2}$ for $n\ge 1$. Note that 
$\frac{1}{2}\phi _{1}$ is equal to the original Takagi function. 
Since 
we have $L_{1/2}|_{[0,1]}(x)=x$, $L_{1/2}|_{(-\infty ,0)}(x)=0 $ and 
$L_{1/2}|_{(1,\infty )}(x)=1$, the function $L_{1/2}$ is $1$-$\Hol$der (Lipschitz). 
However, in \cite{AKTakagi} it is shown that the functions $\phi _{n}$ on $[0,1]$  are $a$-H\"older for every $a<1$, but not $1$-$\Hol$der continuous. It would be interesting to further investigate this phenomenon for the complex analogues of the Takagi function.  
\end{rem}

\begin{rem}
\label{rem:assump}
We  endow Rat with the topology induced from the 
distance dist$_{\Rat}$ which is defined by 
dist$_{\Rat}(f,g):=\sup _{z\in \Chat}d(f(z),g(z))$. 
Then by \cite[Theorem 2.4.1]{sumihyp}, 
the fact $J(G)=\cup _{i\in I}f_{i}^{-1}(J(G))$
(\cite[Lemma 1.1.4]{sumihyp},    
\cite[Remark 3.64]{s11random}, and  
\cite[Theorem 3.24]{S13Coop}), we have that   
 the set 
$$\{ (f_{i})_{i\in I}\in (\mbox{Rat})^{I}:  
\deg (f_{i})\geq 2 \ (i\in I) \mbox{ and 
the conditions (1)(2)(3) hold for } (f_{i})_{i\in I}\} 
$$ is open in $(\mbox{Rat})^{I}.$ 
Also, we have plenty of examples to which we can 
apply the main results of this paper. See Section~\ref{Examples}.

\end{rem}
\begin{rem}
\label{rem:real}
We remark that by using the method in this paper, 
we can show similar results to those of this paper for random dynamical systems of diffeomorphisms  
on $\R $ (or $\R \cup \{ \pm \infty\}$). 
Note that the case of the classical Takagi function $\Phi $ corresponds to the degenerated case 
$\alpha _{-}=\alpha _{+}$ in Theorem~\ref{thm:multifractal}, though in the case of $\Phi $ 
we have the open set condition but do not have the separating condition.  
We emphasize that in this paper we also deal with the non-degenerated case, which seems generic.  
\end{rem}
\vspace{-4mm} 
\begin{rem}
\label{rem:motivation}
We remark that under assumptions (1)(2)(3), 
the iteration of the transition operator $M$ on some $C^{a}(\Chat)$  
  is 
well-behaved (e.g., there exists an 
$M$-invariant finite-dimensional subspace $U$ of $C^{a}(\Chat )$ such that 
for every $h\in C^{a}(\Chat)$, $M^{n}(h)$ tends to $U$ as $n\rightarrow \infty $ exponentially fast) 
and $M$ has a spectral gap on $C^{a}(\Chat )$ 
(\cite[Lemma 1.1.4(2)]{sumihyp}, \cite[Propositions 3.63, 3.65]{s11random}, \cite[Theorems 3.30, 3.31]{S13Coop}). Note that this is a randomness-induced phenomenon (new phenomenon) in random dynamical systems  
which cannot hold in the deterministic iteration dynamics of rational maps of degree two or more, since 
for every $f\in \mbox{Rat}$ with $\deg (f)\geq 2$, the dynamics of $f$ on $J(f)$ is chaotic.   
Combining the above spectral gap property of $M$ on $C^{a}(\Chat )$ and 
the perturbation theory for linear operators (\cite{Katopert}) implies   
that the  map $\vec{x}=(x_{1},\dots,x_{s})\mapsto T_{(x_{1},\dots,x_{s},1-\sum_{i=1}^{s}x_{i})}\in C^{a}(\Chat)$
is real-analytic in a neighborhood of $\vec{p}$ 
in the space $W:=\{ (q_{i})_{i=1}^{s}\in (0,1)^{s}: \sum _{i=1}^{s}q_{i}<1\}$ 
(\cite[Theorem 3.32]{S13Coop}). 
Thus  
it is very natural and important for the study of 
the random dynamical system to consider the 
higher order partial derivatives of $T_{\vec{p}}$ with respect 
to the probability vectors. Moreover, it is very interesting 
that $C_{\vec{n}}$ is a solution of the functional equation 
$(Id-M)(C_{\vec{n}})=F$, where $F$ is a function associated with lower order 
partial derivatives of $T_{\vec{p}}$ 
(Lemma~\ref{lem:cn-functionalequation}). In fact, by using the spectral gap properties of $M$ on $C^{a}(\Chat)$ 
and the arguments in the proof of \cite[Theorem 3.32]{S13Coop}, 
for any $\vec{n}\in \N_{0}^{s}\setminus \{ 0\}$, 
we can show that (I) 
$C_{\vec{n}}$ is the unique continuous solution of the above 
functional equation 
under the boundary condition  $C_{\vec{n}}|_{S_{G}}=0$ 
and (II) $C_{\vec{n}}=\sum _{j=0}^{\infty }M^{j}(F)$ in $C(\Chat )$ and in 
$C^{\alpha }(\Chat )$ for small $\alpha >0.$ 
Thus, we   have a system of functional equations for elements 
$C_{\vec{n}}$  
(see  Lemma~\ref{lem:cn-functionalequation}). 
Note that this is the first paper to investigate the pointwise $\Hol$der exponents and other properties 
of the  
higher order partial derivatives 
$C_{\vec{n}}$ 
of the functions $T_{\vec{p}}$ of probability of tending to 
minimal sets with respect to the probability parameters regarding random dynamical systems which have several variables of probability parameters.  This is a completely new concept. 
In fact, even in the real line, there has been 
no study regarding the objects similar to the above. 
Even more, in this paper we deal with the 
complex linear combinations of partial derivatives $C_{\vec{n}}$, 
which are of course completely new objects in mathematics 
coming naturally from the study of random dynamical 
systems and fractal geometry.  
We also remark that the original Takagi function is  associated 
with Lebesgue's singular functions, but there has been no study about 
the higher order partial derivatives of the distribution functions of 
singular measures with respect to the parameters.  
\end{rem}
The key in the proof of the main results 
of this paper is to consider the system of  functional equations satisfied by the elements of $\T$ 
(Lemma~\ref{lem:cn-functionalequation}). The composition of these equations along orbits is best described in terms of an associated matrix cocycle $A(\omega ,k)$. By using  combinatorial arguments, 
we show a formula for the components of the matrix $A(\omega ,k)$, and 
we carefully estimate the polynomial growth order of these components, as $k$ tends to infinity 
(Lemma~\ref{lem:matrix-growth}). Combining this 
with some calculations of the determinants of matrices 
which are similar to the Vandermonde determinant 
(Lemma~\ref{detlemma}), 
we deduce the linear independence of the vectors $(C_{\vec{r}}(a)-C_{\vec{r}}(b))_{\vec{r}\leq \vec{n}}$ for certain points 
$a,b\in J(G)$ which are close to a given point $x_{0}\in J(G)$ 
(Proposition~\ref{prop:fullrank}). Here, 
$\vec{r}\leq \vec{n}$ means that $r_{i}\leq n_{i}$ for each $i.$ 
 From the linear independence of these vectors 
 we deduce that a certain linear combination of  vectors $(C_{\vec{r}}(a)-C_{\vec{r}}(b))_{\vec{r}\leq \vec{n}}$ 
 is bounded away from zero (Lemma~\ref{lem:eta-keylemma}). This gives us the upper bound of 
 the pointwise $\Hol$der exponents of $C\in \T.$  
 Note that this argument is the key to prove 
 Theorem~\ref{main-thm} and it is 
 the crucial point to derive that the elements $C\in \T$ are not locally constant in any point of the  Julia set (Corollary~\ref{cor:varies}). 
   We emphasize that those ideas are  very new and 
 they give us strong and systematic tools  to analyze random dynamical systems, 
 singular functions, fractal functions and  
 other related topics.  

In Section~\ref{Examples}, 
we give plenty of examples which illustrate the main results of this paper. 
In Section~\ref{Preliminaries} we give some fundamental tools of rational semigroups 
and random complex dynamics. 
In Section~\ref{Functional} we describe the system of functional equations for the elements of 
${\mathcal C}$ and we estimate the growth order of components of associated matrix cocycles. 
In Section~\ref{Proof}, we give the proof of Theorem~\ref{main-thm}, by using the results 
from Section~\ref{Functional}.  
In Section~\ref{section:pfmulti},  
we present the detailed version Theorem~\ref{thm:mf-for-hoelderexponent} 
 of Theorem~\ref{thm:multifractal} and we give the proof of it  
 by using Theorem~\ref{main-thm} and 
some results from \cite[Theorem 1.2]{JS13b}. 
Also, we  give the proof of Theorem~\ref{thm:c0holder}
by using the argument in the proof of Theorem~\ref{main-thm} and by 
developing some ideas from \cite{kessenondifferentiabilityminkowski, MR2576266}. 
In Section~\ref{section:pfl-t-o}, we give the proof of 
Theorem~\ref{thm:less-than-one} by combining that $C_{0}$ is $\alpha _{-}$-$\Hol$der continuous on $\Chat $
 (Theorem~\ref{thm:c0holder}),  
 the multifractal analysis on the pointwise $\Hol$der exponents of $C_{0}$ 
(Theorems~\ref{thm:multifractal} and \ref{thm:mf-for-hoelderexponent}), 
an  argument on the Lipschitz functions on $\C $  
and the result $0<\dim _{H}(J(G))<2$, which follows from the assumptions (1) and  (2) (\cite{MR1625124}).  
\selectlanguage{english}%
\section{Examples}
\label{Examples}
In this section, we give some examples which illustrate 
the main results of this paper. 

For $f\in $Rat, we set $F(f):=F(\langle f\rangle ), J(f):=J(\langle f\rangle )$, 
and $P(f)=P(\langle f\rangle ).$  
We denote by ${\mathcal P}$ the set of polynomials of degree two or  more. 
For $g\in {\mathcal P}$, we denote by $K(g)$ the filled-in Julia set. 
If $G$ is a rational semigroup and if $K$ is a non-empty 
compact subset of $\Chat $ such that $g(K)\subset K$ for each 
$g\in G$, then Zorn's lemma implies that there exists a minimal 
set $L$ of $G$ with $L\subset K$ (\cite[Remark 3.9]{s11random}).  

The following propositions show us  several methods to produce many examples of 
$(f_{1},\ldots, f_{s+1})\in (\mbox{Rat})^{s+1}$ which satisfy assumptions (1)(2)(3) of this paper. 
For such elements $(f_{1},\ldots, f_{s+1}) $ and for every $\vec{p}=(p_{i})_{i=1}^{s}\in (0,1)^{s}$
 with $\sum _{i=1}^{s}p_{i}<1$, we can apply the results Theorems~\ref{main-thm}, 
\ref{thm:multifractal}, \ref{thm:c0holder},  
\ref{thm:less-than-one}, \ref{thm:alphachaos} and   
Corollaries~\ref{cor:varies}, \ref{cor:ergodictheorem}, \ref{cor:Hmeas} and \ref{cor:alpha} in Section~\ref{Introduction}. 
\begin{prop}
\label{prop:hypsub}
Let $(g_{1},\ldots ,g_{s+1})\in (\mbox{{\em Rat}})^{s+1}$ with 
$\deg (g_{i})\geq 2, i=1\ldots, s+1.$ 
Suppose that $\langle g_{1},\ldots, g_{s+1}\rangle $ is hyperbolic,  
$J(g_{i})\cap J(g_{j})=\emptyset $ for every 
$(i,j)$ with $i\neq j$,  
and that
there exist at least two distinct minimal sets of $\langle g_{1},\ldots, g_{s+1}\rangle .$ 
Then there exists $m\in \Bbb{N}$ such that for every $n\in \Bbb{N}$ with 
$n\geq m$, setting $f_{i}=g_{i}^{n}, i=1,\ldots, s+1$, 
the element $(f_{1},\ldots, f_{s+1})$ satisfies assumptions (1)(2)(3) of this paper. 
\end{prop}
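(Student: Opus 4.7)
The plan is to verify assumptions (1), (2), (3) of the paper for $G:=\langle g_1^n,\ldots,g_{s+1}^n\rangle$, exploiting the semigroup inclusion $G\subset H:=\langle g_1,\ldots,g_{s+1}\rangle$ and the resulting inclusions $J(G)\subset J(H)$ and $F(H)\subset F(G)$. I expect conditions (1) and (3) to hold for every $n\geq 1$, while condition (2) will require $n$ to be sufficiently large.

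First I would verify (1) for all $n$ by noting that every critical value of $f_i=g_i^n$ lies in the forward $\langle g_i\rangle$-orbit of the critical values of $g_i$, whence $P(G)\subset P(H)$; hyperbolicity of $H$ then gives $P(G)\subset F(H)\subset F(G)$. For (3), I would take two distinct minimal sets $L_1,L_2$ of $H$; these are automatically disjoint, each is $G$-invariant, and Zorn's lemma applied to the non-empty compact $G$-invariant subsets of $L_k$ yields a minimal set $M_k$ of $G$ inside $L_k$, so $M_1$ and $M_2$ are two distinct minimal sets of $G$.

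The main work is condition (2). Since the sets $J(g_i)$ are pairwise disjoint and compact, I can choose pairwise disjoint open neighborhoods $U_i\supset J(g_i)$. As $J(G)\subset J(H)$, it suffices to show that $g_i^{-n}(J(H))\subset U_i$ for each $i$ and all sufficiently large $n$. The key observation is that each $g_i$ is individually hyperbolic as a rational map, because $P(g_i)\subset P(H)\subset F(H)\subset F(g_i)$; in particular $F(g_i)$ is the union of basins of the (finitely many) attracting cycles of $g_i$, all of which are contained in $P(g_i)\subset F(H)$. Supposing the inclusion fails along a subsequence $n_k\to\infty$, one extracts $z_k\in g_i^{-n_k}(J(H))\setminus U_i$ converging to some $z^*\notin U_i$, so $z^*\in F(g_i)$ and hence $z^*$ lies in the basin of some attracting cycle $A\subset F(H)$. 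On a small compact neighborhood $W$ of $z^*$ the iterates $g_i^n$ converge uniformly to $A$, so $g_i^n(W)\subset F(H)$ for all large $n$; for $k$ large this forces $g_i^{n_k}(z_k)\in F(H)$, contradicting $g_i^{n_k}(z_k)\in J(H)$.

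I expect the main obstacle to be this final normal-family argument, which crucially combines the individual hyperbolicity of each $g_i$ (inherited from $H$) with the stronger \emph{semigroup} hyperbolicity of $H$, to guarantee that the attractors of $g_i$ lie inside $F(H)$ rather than merely inside $F(g_i)$. Everything else is routine bookkeeping with the definitions, Zorn's lemma, and the basic invariance properties of Fatou and Julia sets of rational semigroups.
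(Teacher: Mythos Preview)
Your proposal is correct and follows essentially the same approach as the paper's proof: both use the inclusions $G\subset H$, $J(G)\subset J(H)$, $P(G)\subset P(H)\subset F(H)\subset F(G)$ for (1), Zorn's lemma inside the $H$-minimal sets for (3), and the fact that the attracting cycles of each $g_i$ lie in $P(H)\subset F(H)$ to force $g_i^{-n}(J(H))$ into a prescribed neighborhood of $J(g_i)$ for (2). The paper states the last step in one sentence, while you spell out the compactness/contradiction argument and the individual hyperbolicity of $g_i$; this is exactly the justification the paper leaves implicit.
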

\begin{proof}
Let $H=\langle g_{1},\ldots, g_{s+1}\rangle .$ 
Since $J(g_{i}), i=1,\ldots, s+1$ are mutually disjoint and since 
attracting cycles of $g_{i}$ are included in $F(H)=\Chat \setminus J(H)$, 
there exists $m\in \Bbb{N} $ such that  for every $n\geq m$, 
setting  $f_{i}=g_{i}^{n}, i=1,\ldots,$ the sets 
$f_{i}^{-1}(J(H)), i=1,\ldots, s+1$, are mutually disjoint. 
Let $G=\langle f_{1},\ldots, f_{s+1}\rangle .$ 
Then $G$ is a subsemigroup of $H.$ Thus 
$F(H)\subset F(G)$ and $P(G)\subset P(H).$ 
Hence $P(G)\subset P(H)\subset F(H)\subset F(G).$ Therefore 
$G$ is hyperbolic. Moreover, since $J(G)\subset J(H)$, 
the sets   $f_{i}^{-1}(J(G)), i=1,\ldots, s+1$, are mutually disjoint. 
Let $L_{1}$ and $L_{2}$ be two distinct minimal sets of $H.$ 
Then for every $g\in H$ and for every $i=1,2$, we have $g(L_{i})\subset L_{i}.$ 
In particular, for every $f\in G$ and for every $i=1,2,$ $f(L_{i})\subset L_{i}.$ 
By \cite[Remark 3.9]{s11random}
it follows that for every $i=1,2,$ there exists a minimal set
$L_{i}'$ of $G$ with $L_{i}'\subset L_{i}.$ 
Hence $(f_{1},\ldots, f_{s+1})$ satisfies assumptions (1)(2)(3) of this paper. 
\end{proof}

\begin{prop}
\label{p:ratex}
Let $(g_{1},\ldots, g_{s+1})\in (\mbox{{\em Rat}})^{s+1}$ with $\deg (g_{i})\geq 2, i=1,\ldots ,s+1.$ 
Suppose that $\cup _{i=1}^{s+1}P(g_{i})\subset \cap _{i=1}^{s+1}
F(g_{i})$, that $J(g_{i})\cap J(g_{j})=\emptyset $ for every 
$(i,j)$ with $i\neq j$, and that there exist two non-empty compact subsets
$K_{1}, K_{2}$ of $\Chat $ with $K_{1}\cap K_{2}=\emptyset $ 
such that $g_{i}(K_{j})\subset K_{j}$ for every $i=1,\ldots, s+1$ and 
for $j=1,2.$ Then there exists $m\in \Bbb{N}$ such that for every $n\in \Bbb{N}$ with 
$n\geq m$, setting $f_{i}=g_{i}^{n}, i=1,\ldots, s+1$, 
the element $(f_{1},\ldots, f_{s+1})$ satisfies assumptions (1)(2)(3) of this paper. 
\end{prop}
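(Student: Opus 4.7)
The plan is to show directly that, for all sufficiently large $n$, the tuple $(g_{1}^{n},\ldots,g_{s+1}^{n})$ satisfies conditions~(1), (2), (3), rather than reducing to Proposition~\ref{prop:hypsub}. Set $G_{n}:=\langle g_{1}^{n},\ldots,g_{s+1}^{n}\rangle$ and $f_{k}:=g_{k}^{n}$. The existence of at least two distinct minimal sets of $G_{n}$ is immediate: since $g_{i}(K_{j})\subset K_{j}$ for all $i,j$, the sets $K_{1}$ and $K_{2}$ are invariant under every element of $G_{n}$, so by \cite[Remark 3.9]{s11random} each contains a minimal set of $G_{n}$, and these are distinct because $K_{1}\cap K_{2}=\emptyset$.

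For hyperbolicity and the separating condition I would set up two systems of open neighborhoods, chosen independently of $n$. Each $g_{i}$ is individually hyperbolic by hypothesis, so $J(g_{i})$ is a hyperbolic set for $g_{i}$; pick pairwise disjoint open neighborhoods $V_{i}$ of $J(g_{i})$. Let $A_{i}$ be the union of the attracting cycles of $g_{i}$. Since $g_{i}$ is hyperbolic, every attracting cycle of $g_{i}$ absorbs at least one critical value, hence $A_{i}\subset P(g_{i})$ and $A:=\bigcup_{i}A_{i}\subset\bigcup_{j}P(g_{j})\subset\bigcap_{j}F(g_{j})$. By shrinking the $V_{i}$ if necessary, arrange that $\overline{V_{i}}\cap\bigcup_{j}P(g_{j})=\emptyset$, and then pick an open neighborhood $W$ of $A$ with $\overline{W}\cap\bigcup_{i}V_{i}=\emptyset$.

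The key dynamical input is the uniform convergence $g_{k}^{n}\to A_{k}$ on compact subsets of $F(g_{k})$, valid because $g_{k}$ is hyperbolic. Applied to the compact set $\Chat\setminus V_{k}\subset F(g_{k})$, this yields a threshold $m$ such that for every $n\ge m$ and every $k\in I$ one has $g_{k}^{n}(\Chat\setminus V_{k})\subset W$. In particular $f_{k}(W)\subset W$ (since $\overline{W}\subset\Chat\setminus V_{k}$), and for any $z\in\Chat\setminus\bigcup_{i}V_{i}$, iterating these inclusions shows that every $h\in G_{n}$ maps a sufficiently small neighborhood $N$ of $z$ into $W$. Since $\Chat\setminus W$ contains the infinite set $\bigcup_{i}J(g_{i})$, Montel's theorem gives normality of $\{h|_{N}:h\in G_{n}\}$, so $\Chat\setminus\bigcup_{i}V_{i}\subset F(G_{n})$, and hence $J(G_{n})\subset\bigcup_{i}V_{i}$. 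The separating condition follows immediately: for any $z\in\Chat\setminus V_{i}$ one has $g_{i}^{n}(z)\in W$, which is disjoint from every $V_{k}$, so $z\notin g_{i}^{-n}(V_{k})$; hence $g_{i}^{-n}\bigl(\bigcup_{k}V_{k}\bigr)\subset V_{i}$ and $f_{i}^{-1}(J(G_{n}))\subset V_{i}$, while the $V_{i}$'s are pairwise disjoint.

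Finally, $\CV(f_{i})=\CV(g_{i}^{n})\subset P(g_{i})\subset\bigcup_{j}P(g_{j})\subset\Chat\setminus\bigcup_{i}V_{i}\subset F(G_{n})$ handles the critical values themselves, and for any non-identity $h=f_{k_{\ell}}\circ\cdots\circ f_{k_{1}}\in G_{n}$ and $v\in\bigcup_{j}P(g_{j})$, applying $f_{k_{1}}$ first lands in $W$ and subsequent applications stay in $W$, so $h(v)\in W\subset F(G_{n})$. Taking closures, $P(G_{n})\subset\bigl(\bigcup_{j}P(g_{j})\bigr)\cup\overline{W}\subset F(G_{n})$, which proves hyperbolicity. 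The main technical obstacle is arranging the simultaneous constraints on $V_{i}$ and $W$ (disjointness, avoidance of $\bigcup_{j}P(g_{j})$, compatibility with the attraction estimate) so that a single threshold $m$ works for all $k\in I$; this is a straightforward compactness argument combined with the uniform convergence property.
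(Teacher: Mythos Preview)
Your argument is correct and establishes all three conditions directly. The underlying dynamical input is the same one the paper uses---uniform convergence of $g_{i}^{n}$ to the attracting cycles on compact subsets of $F(g_{i})$---but you organize the proof differently. The paper chooses nested neighborhoods $A_{1}\subset A_{2}$ of $\bigcup_{i}P(g_{i})$, shows $f_{i}(A_{2})\subset A_{1}$ for large $n$, deduces $P(G_{n})\subset\overline{A_{1}}\subset A_{2}\subset F(G_{n})$ (hence hyperbolicity), obtains the two minimal sets from $K_{1},K_{2}$ exactly as you do, and then simply invokes Proposition~\ref{prop:hypsub} to get the separating condition. You instead work with disjoint neighborhoods $V_{i}$ of the individual Julia sets $J(g_{i})$ together with a neighborhood $W$ of the attracting cycles, and you verify hyperbolicity and the separating condition by hand from the inclusion $g_{k}^{n}(\Chat\setminus V_{k})\subset W$. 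Your route is more self-contained (no appeal to Proposition~\ref{prop:hypsub}) and makes the mechanism behind the separating condition completely explicit; the paper's route is shorter because the separation step has already been packaged. One small technical point: your Montel argument literally shows $\Chat\setminus\overline{\bigcup_{i}V_{i}}\subset F(G_{n})$, so you should choose the $V_{i}$ with pairwise disjoint closures (or run the argument with slightly enlarged $V_{i}'\supset\overline{V_{i}}$) to conclude $f_{i}^{-1}(J(G_{n}))\subset V_{i}$; this is straightforward given the disjointness hypotheses and does not affect the structure of your proof.
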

\begin{proof}
Let $\epsilon >0$ be so small that 
$B(\cup _{i=1}^{s+1}P(g_{i}), 2\epsilon )\subset \cap _{i=1}^{s+1}F(g_{i})$ 
and $B(J(g_{i}),2\epsilon )\cap B(J(g_{j}),2\epsilon )=\emptyset $ 
for every $(i,j)$ with $i\neq j.$ 
Let $m\in \Bbb{N}$ be a sufficiently large number. 
Let $n\geq m$ and let $f_{i}=g_{i}^{n}.$ 
Let $G=\langle f_{1},\ldots, f_{s+1}\rangle .$  
Let $A_{k}:=B(\cup _{i=1}^{s+1}P(g_{i}), k\epsilon )$ for each $k=1,2.$ 
Then taking $m$ so large, we have 
$f_{i}(A_{2})\subset A_{1}$ for every $i=1,\ldots, s+1.$ 
It implies $P(G)\subset \overline{A_{1}}\subset A_{2}\subset F(G).$ Hence $G$ is hyperbolic. 
Moreover, by \cite[Remark 3.9]{s11random}, there exists a minimal set $L_{j}$ of $G$ 
with $L_{j}\subset K_{j}$, for every $j=1,2.$ 
By Proposition~\ref{prop:hypsub}, the statement of our proposition holds. 
\end{proof}

Combining \cite[Remark 3.9]{s11random} with \cite[Proposition 6.1]{s11random}, we 
also obtain the following.  

\begin{prop}
\label{prop:s1ex}
Let $f_{1}\in {\mathcal P}$  
be hyperbolic,  i.e., $P(f_{1})
\subset F(f_{1}).$ 
Suppose that $\mbox{{\em Int}}(K(f_{1}))\neq \emptyset $, where 
{\em Int} denotes the set of interior points. 
Let 
$b\in \mbox{{\em Int}}(K(f_{1}))$ be a point.  
Let $d\in \Bbb{N}$ with $d\geq 2.$ 
Suppose that $(\deg (f_{1}),d)\neq (2,2)$. 
Then there exists a number $c>0$ such that 
for each $\lambda \in \{ \lambda \in \C: 0<|\lambda |<c\} $, 
setting $f_{2,\lambda }(z):=\lambda (z-b)^{d}+b$, 
we have the following. 
\begin{itemize}
\item[{\em (1)}]
$(f_{1},f_{2,\lambda })$ satisfies assumptions {\em (1)(2)(3)}  
of this paper with $s=1.$   

\item[{\em (2)}] 
If $J(f_{1})$ is connected, then  
$P(\langle f_{1},f_{2,\lambda }\rangle )\setminus \{ \infty\} $ 
is bounded in $\C .$

\end{itemize}
  
\end{prop}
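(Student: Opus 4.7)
The plan is to exploit the fact that for small $|\lambda|$ the map $f_{2,\lambda}(z) = \lambda(z-b)^d + b$ is a strong contraction toward $b$ on any fixed bounded set, while preserving $\infty $ as a superattracting fixed point. Via the conjugation $w = z-b$, $f_{2,\lambda}$ is conjugate to $w \mapsto \lambda w^d$, so $J(f_{2,\lambda})$ is the round circle centered at $b$ of radius $|\lambda|^{-1/(d-1)}$, which escapes to $\infty$ as $\lambda \to 0$. In particular $J(f_1) \cap J(f_{2,\lambda}) = \emptyset$ for small $|\lambda|$, and $f_{2,\lambda}$ sends any fixed compact subset of $\C$ into an arbitrarily small neighborhood of $b$ once $|\lambda|$ is small enough.

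For condition (1), since $f_1$ is hyperbolic and $b \in \Int(K(f_1)) \subset F(f_1)$, I would construct a compact $f_1$-forward-invariant neighborhood $W \subset F(f_1)$ of $(P(f_1) \setminus \{\infty\}) \cup \{b\}$, and then choose $c$ small enough that $f_{2,\lambda}(W) \subset \Int(W)$; then $\widetilde{W} := W \cup V_\infty$, with $V_\infty$ a common attracting neighborhood of $\infty$ for both generators, is $G$-forward-invariant with $G = \langle f_1, f_{2,\lambda}\rangle$ and contains all critical values, yielding $P(G) \subset \widetilde{W} \subset F(G)$ (equicontinuity of $G$ on $\widetilde{W}$ follows by factoring any $g \in G$ through its last occurrence of $f_{2,\lambda}$, which collapses orbits into a tiny ball around $b$, followed by $f_1$-iterates that are equicontinuous on the basin of $b$). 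Condition (3) follows from \cite[Remark 3.9]{s11random} applied to the two disjoint $G$-forward-invariant compact sets $W$ and $V_\infty$, each of which contains a minimal set. For condition (2), observe that for small $|\lambda|$, $J(G)$ lies close to $J(f_1) \cup J(f_{2,\lambda})$, so $f_1^{-1}(J(G))$ concentrates near $J(f_1) \subset K(f_1)$, whereas $f_{2,\lambda}^{-1}(J(G))$ concentrates near the large circle $J(f_{2,\lambda})$; these are disjoint for $|\lambda|$ small.

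For part (2), if $J(f_1)$ is connected then $P(f_1) \setminus \{\infty\} \subset K(f_1)$ is bounded in $\C$, and the set $W$ above can be chosen bounded in $\C$; since $P(G) \setminus \{\infty\}$ is the closure of the $G$-orbit of $(P(f_1) \setminus \{\infty\}) \cup \{b\}$ in the $G$-forward-invariant $W$, it follows that $P(G) \setminus \{\infty\} \subset W$ is bounded. The main obstacle is the uniform-in-$\lambda$ verification of the separating condition (2), since $J(G)$ itself depends on $\lambda$; this is most cleanly handled by invoking \cite[Proposition 6.1]{s11random}, where precisely this perturbation argument is carried out for pairs of polynomials of the stated form, and the numerical exclusion $(\deg(f_1), d) \neq (2,2)$ is what is required there to avoid the one degenerate conformal coincidence. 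The minimal-set count is then layered on via \cite[Remark 3.9]{s11random} as indicated above.
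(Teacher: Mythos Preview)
Your proposal is correct and follows essentially the same route as the paper: the paper simply states that the proposition follows by combining \cite[Proposition 6.1]{s11random} (which furnishes hyperbolicity and the separating condition for the pair $(f_{1},f_{2,\lambda})$ under the hypothesis $(\deg f_{1},d)\neq(2,2)$, and the postcritical boundedness when $J(f_{1})$ is connected) with \cite[Remark 3.9]{s11random} (which yields a minimal set inside each of the two disjoint $G$-forward-invariant compacta $K(f_{1})$ and a neighbourhood of $\infty$). Your write-up supplies the geometric picture behind these citations---the round-circle description of $J(f_{2,\lambda})$, the contraction of $f_{2,\lambda}$ toward $b$ on bounded sets, and the construction of the forward-invariant neighbourhood $W$---but ultimately rests on the same two references, as you yourself note when you defer the delicate uniform-in-$\lambda$ separating condition to \cite[Proposition 6.1]{s11random}.
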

Thus combining the above with Remark~\ref{rem:assump}, 
we obtain that for any $(f_{1},f_{2,\lambda })$ in the above, 
there exists a neighborhood $V$ of $(f_{1},f_{2,\lambda })$ 
in $(\mbox{Rat})^{2}$ such that for  every $(g_{1},g_{2})\in V$, 
assumptions (1)(2)(3) of this paper are satisfied and  
Theorems~\ref{main-thm}, \ref{thm:multifractal}, \ref{thm:c0holder}, 
\ref{thm:less-than-one},  \ref{thm:alphachaos} and   
Corollaries~\ref{cor:varies}, \ref{cor:ergodictheorem},\ref{cor:Hmeas} and \ref{cor:alpha} in  Section~\ref{Introduction} hold. 
Also, 
endowing ${\mathcal P}$ with the relative topology from Rat, we have that 
there exists an open neighborhood $W$ of $(f_{1},f_{2,\lambda })$ in ${\mathcal P}^{2}$ 
such that for every $(g_{1},g_{2})\in W$ and for every $\vec{p}=p_{1}\in (0,1)$, Corollary~\ref{cor:nondiff} holds.   

\begin{example}
Let $(f_{1},f_{2})\in {\mathcal P}^{2}$ be an element such that 
$\langle f_{1},f_{2}\rangle $ is hyperbolic, 
$P(\langle f_{1},f_{2}\rangle )\setminus \{ \infty \} $ is bounded in $\C $ and 
$J(\langle f_{1}, f_{2}\rangle )$ is disconnected. Note that 
there are plenty of examples of such elements $(f_{1},f_{2})$ (Proposition~\ref{prop:s1ex}, 
\cite{s11pb1, twogenerator}). 
Then by  \cite[Theorems 1.5, 1.7]{s09cohom}, we have that 
$f_{1}^{-1}(J(G))\cap f_{2}^{-1}(J(G))=\emptyset $ where $G=\langle f_{1},f_{2}\rangle. $ 
Thus $(f_{1},f_{2})$ satisfies assumptions (1)(2)(3) of this paper with $s=1$ and 
all results in Section~\ref{Introduction} hold for $(f_{1},f_{2})$ and for every 
$\vec{p}=p_{1}\in (0,1).$    
 \end{example}  
\begin{example} 
Let $g_{1}(z)=z^{2}-1, g_{2}(z)=z^{2}/4,$ and  let $f_{i}=g_{i}\circ g_{i}$, $i=1,2.$ 
Let $\vec{p}=p_{1}=1/2.$ Let $G=\langle f_{1},f_{2}\rangle. $ 
Then $(f_{1},f_{2})$ satisfies the assumptions (1)(2)(3) of this paper with $s=1$ and 
$P(G)\setminus \{ \infty \}$ is bounded in $\C $ (\cite[Example 6.2]{s11random},\cite[Example 6.2]{S13Coop}). 
Thus for this $(f_{1},f_{2})$, all results of Section~\ref{Introduction} hold. 
In particular, every non-trivial $C\in \T$ is $\Hol$der continuous on $\Chat$ and 
varies precisely on the Julia set $J(G)$ (Corollary~\ref{cor:varies}).  Moreover, 
by Corollary~\ref{cor:nondiff}, 
there exists a Borel dense subset $A$ of $J(G)$ with $\mu _{\vec{p}}(A)=1, \ \dim _{H}(A)\geq \dim _{H}(\mu _{\vec{p}})=\frac{3}{2}$  
such that for every non-trivial $C\in \T$ and for every $z\in A$, we have
 $\alpha _{-}\leq \Hol(C,z)=\frac{1}{2}\leq \alpha _{+}$ and $C$ is not differentiable at $z.$   
  For the figures of $J(G)$ and the graphs of $C_{0}, C_{1}$ with $L=\{ \infty \}$, 
 see \cite[Figures 2,3,4]{S13Coop}.  
Note that Theorem~\ref{thm:multifractal} implies that $\alpha _{-}<\alpha _{+}$ 
for every probability vector (parameter) $\vec{p}' \in (0,1).$   
  \end{example} 
\begin{example} 
Let $\lambda \in \C $ with $0<|\lambda |\leq 0.01$ and let 
$f_{1}(z)=z^{2}-1, f_{2}(z)=\lambda z^{3}.$ 
Then by \cite[Example 5.4]{rcddc}, the element $(f_{1},f_{2})$ satisfies 
assumptions (1)(2)(3) of this paper with $s=1$ and 
$P(\langle f_{1},f_{2}\rangle )\setminus \{ \infty \} $ is bounded in $\C .$ Thus all results in Section~\ref{Introduction} 
hold for $(f_{1},f_{2})$ and for every probability vector (parameter) $\vec{p}=p_{1}\in (0,1).$ 
Thus, setting $p_{1}=\frac{1}{2}$, $G=\langle f_{1},f_{2}\rangle $ and $L=\{ \infty \}$, 
every non-trivial $C\in \T $ is $\Hol$der continuous on $\Chat $ and 
varies precisely on $J(G)$, and 
Corollary~\ref{cor:nondiff} 
implies that 
there exists a Borel dense subset $A$ of $J(G)$ with 
$\mu _{\vec{p}}(A)=1$ and $\dim _{H}(A)\geq 1+\frac{2\log 2}{\log 2+\log 3}\fallingdotseq 1.7737 
$ such that for every non-trivial $C\in \T $ and for every 
$z\in A$, we have 
$\alpha _{-}\leq \Hol(C,z)=\frac{2\log 2}{\log 2+\log 3} (\fallingdotseq 0.7737) \leq \alpha _{+}$ 
and $C$ is not differentiable at $z.$ 
Also, by Theorem~\ref{thm:multifractal}, we have 
$\alpha _{-}<\alpha _{+}$ for every $\vec{p}'\in (0,1).$ 
 \end{example}   
\begin{example}
Let $g_{1}, g_{2}\in {\mathcal P}$ be hyperbolic. 
Suppose that $(J(g_{1})\cup J(g_{2}))\cap (P(g_{1})\cup P(g_{2}))=\emptyset $, 
$K(g_{1})\subset \mbox{Int}(K(g_{2}))$, and the union of attracting cycles of $g_{2}$ in $\C $ is included in 
Int$(K(g_{1})).$ Then by \cite[Proposition 6.3]{s11random}, there exists an $m\in \Bbb{N}$ 
such that for each $n\in \Bbb{N}$ with $n\geq m$, setting $f_{1}=g_{1}^{n}, f_{2}=g_{2}^{n}$, 
we have that $(f_{1},f_{2})$ satisfies assumptions (1)(2)(3) of this paper with $s=1.$ 
Thus all statements of the results in Section~\ref{Introduction} hold for $(f_{1},f_{2})$ and for every 
$\vec{p}=p_{1}\in (0,1).$ 
 
 \end{example} 
The following proposition provides us a method to 
construct examples of $(f_{1},\ldots, f_{s+1})\in {\mathcal P}^{s+1}$ 
for which  (1)(2)(3) hold and $P(\langle f_{1},\ldots, f_{s+1}\rangle )\setminus 
\{ \infty \}   $ is bounded in $\C .$ 
For such elements $(f_{1},\ldots, f_{s+1})$ and for every 
$\vec{p}\in (0,1)^{s}$ with $\sum _{i=1}^{s}p_{i}<1$, we can apply 
all the results in Section~\ref{Introduction}. 
\begin{prop}
\label{prop:pbbsany}
Let $g_{1},\ldots, g_{s+1}\in {\mathcal P}$ be hyperbolic and suppose that 
$J(f_{i})$ is connected for every $i=1,\ldots, s+1.$ 
Suppose that $J(f_{i})\subset \mbox{Int}(K(f_{i+1}))$ for every $i=1,\ldots, s$. 
Suppose also that $\cup _{i=2}^{s+1}P(g_{i})\setminus \{ \infty \} \subset \mbox{Int}(K(f_{1})).$ 
Then there exists an $m\in \N $ such that for every $n\in \N $ with $n\geq m$, 
setting $f_{i}=g_{i}^{n}, i=1,\ldots, s+1$, 
the element $(f_{1},\ldots, f_{s+1})$ satisfies assumptions (1)(2)(3) and 
$P(\langle f_{1}, \ldots ,f_{s+1}\rangle )\setminus \{ \infty \} $ is bounded in $\C .$   
\end{prop}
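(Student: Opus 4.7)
My plan is to construct, for every sufficiently large $n$, a compact trapping region $\overline{W}\subset\mbox{Int}(K(g_{1}))$ that contains every post-critical set of the $g_{i}$'s and is mapped into its interior by every iterate $g_{i}^{n}$. Once this is in place, all of conditions (1)--(3) together with the boundedness of $P(G)\setminus\{\infty\}$ will follow.

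I would first establish the nesting $K(g_{1})\subset\mbox{Int}(K(g_{2}))\subset\cdots\subset\mbox{Int}(K(g_{s+1}))$ by induction on $i$. Since each $J(g_{i})$ is connected, $K(g_{i})$ is connected and the basin of infinity of $g_{i+1}$, intersected with $\C$, is a connected unbounded subset of $\C$. From $J(g_{i})\subset\mbox{Int}(K(g_{i+1}))$ I would deduce that this basin is disjoint from $J(g_{i})$, hence lies in the unbounded component of $\C\setminus J(g_{i})$, which forces the bounded components of $\C\setminus J(g_{i})$ (that is, $\mbox{Int}(K(g_{i}))$) to lie in $\mbox{Int}(K(g_{i+1}))$. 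In particular the sets $J(g_{i})$ are pairwise disjoint. Hyperbolicity of $g_{1}$ together with $J(g_{1})$ connected further gives $P(g_{1})\setminus\{\infty\}\subset\mbox{Int}(K(g_{1}))$, so that the compact set $Q:=\bigcup_{i}P(g_{i})\setminus\{\infty\}$ is contained in $\mbox{Int}(K(g_{1}))$; I then pick an open $W$ with $Q\subset W\subset\overline{W}\subset\mbox{Int}(K(g_{1}))$.

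Since $\overline{W}\subset K(g_{1})\subset\mbox{Int}(K(g_{i}))$ is a compact subset of $F(g_{i})$ for every $i$, and since each $g_{i}$ is hyperbolic, every orbit of $g_{i}$ starting in $\overline{W}$ converges uniformly to the attracting cycles of $g_{i}$, which lie in $P(g_{i})\setminus\{\infty\}\subset W$. Hence there exists $m\in\N$ such that $g_{i}^{n}(\overline{W})\subset W$ and $g_{i}^{n}(K(g_{1}))\subset\mbox{Int}(K(g_{1}))$ for every $i$ and every $n\geq m$. Fix such $n$, set $f_{i}:=g_{i}^{n}$ and $G:=\langle f_{1},\ldots,f_{s+1}\rangle$. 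Then $\overline{W}$ and $\{\infty\}$ are disjoint compact $G$-invariant sets, so the Zorn argument of \cite[Remark 3.9]{s11random} produces two distinct minimal sets of $G$, yielding (3). The finite critical values of $f_{i}$ lie in $P(g_{i})\setminus\{\infty\}\subset W$, and since $W$ is forward $G$-invariant we obtain $P(G)\setminus\{\infty\}\subset\overline{W}\subset K(g_{1})$, which is bounded in $\C$. Moreover $\mbox{Int}(K(g_{1}))$ is $G$-invariant and bounded in $\C$, so $G$ restricted to $\mbox{Int}(K(g_{1}))$ is a normal family by Montel's theorem, giving $\mbox{Int}(K(g_{1}))\subset F(G)$; together with $\{z\in\Chat:|z|>R\}\subset F(G)$ for $R$ large (as polynomial iterates escape to infinity uniformly near $\infty$), this proves $P(G)\subset F(G)$, so (1) holds.

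For the separating condition (2) I would observe that $J(G)$ avoids every exceptional point of $g_{i}$: any finite exceptional point of $g_{i}$ is a superattracting fixed point of $g_{i}$, hence lies in $P(g_{i})\setminus\{\infty\}\subset\mbox{Int}(K(g_{1}))\subset F(G)$. By hyperbolicity of $g_{i}$, the preimages $g_{i}^{-n}(J(G))$ then converge to $J(g_{i})$ in Hausdorff metric as $n\to\infty$, and since the $J(g_{i})$ are pairwise disjoint, a further enlargement of $m$ makes the sets $f_{i}^{-1}(J(G))=g_{i}^{-n}(J(G))$ pairwise disjoint, completing (2). The main obstacle will be the simultaneous trapping step: producing one compact set $\overline{W}\subset\mbox{Int}(K(g_{1}))$ into whose interior every $g_{i}^{n}$ maps $\overline{W}$. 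This crucially uses both the hyperbolicity of every $g_{i}$ and the hypothesis that the post-critical sets of $g_{i}$ for $i\geq2$ already lie in $\mbox{Int}(K(g_{1}))$, which is what places the attractors of all $g_{i}$ inside a common trap $W$.
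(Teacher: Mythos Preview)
Your overall strategy coincides with the paper's: trap the finite postcritical set in $\mathrm{Int}(K(g_1))$, use forward invariance of $\mathrm{Int}(K(g_1))$ under $G$ to get hyperbolicity and two minimal sets, and obtain the separating condition from the fact that $g_i^{-n}$ pulls sets toward $J(g_i)$. Two points need tightening.

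First, the claim $g_i^{n}(K(g_1))\subset\mathrm{Int}(K(g_1))$ for \emph{every} $i$ is false when $i=1$, since $g_1(K(g_1))=K(g_1)$. This is harmless: what you actually use is the forward invariance of $\mathrm{Int}(K(g_1))$ under each $f_i$, which for $i=1$ follows directly from $g_1^{-1}(\mathrm{Int}(K(g_1)))=\mathrm{Int}(K(g_1))$, and for $i\ge 2$ from your convergence-to-attractors argument applied to the compact set $K(g_1)\subset\mathrm{Int}(K(g_i))$.

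Second, and more substantively, your separation argument speaks of the Hausdorff limit of $g_i^{-n}(J(G))$ as $n\to\infty$, but $G=\langle g_1^n,\dots,g_{s+1}^n\rangle$ and hence $J(G)$ depend on $n$, so this limit is not well posed as stated. The fix is to trap all of the $J(G)$'s in a single compact set $B$ independent of $n$ that misses the attracting cycles (and hence the exceptional points) of every $g_i$; then uniform convergence to attractors on compact subsets of $F(g_i)$ gives $g_i^{-n}(B)\subset B(J(g_i),\epsilon)$ for large $n$, and the $J(g_i)$ being pairwise disjoint finishes the job. The paper does exactly this with the explicit choice $B:=K(g_{s+1})\setminus\mathrm{Int}(K(g_1))$, first checking $\bigcup_i f_i^{-1}(B)\subset B$ so that $J(G)\subset B$. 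Your own argument already supplies such a set as well---for instance $\overline{B(0,R)}\setminus\mathrm{Int}(K(g_1))$ with $R$ large enough that $|g_i(z)|>|z|$ for all $i$ and $|z|>R$---you just need to make this step explicit.
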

\begin{proof}
Let $n\in \N $ be large enough and let $f_{i}=g_{i}^{n}.$ 
Then there exists a compact subset $A$ of $\mbox{Int}(K(f_{1}))$ such that 
$\cup _{i=2}^{s+1}f_{i}(K(f_{1}))\subset A.$ Also, 
$\overline{\cup _{r=1}^{\infty }f_{1}^{r}(A\cup P(f_{1})\setminus \{ \infty \} )}
\subset \mbox{Int}(K(f_{1})).$ Hence $P(G)\setminus \{ \infty \} \subset 
\mbox{Int}(K(f_{1}))$, where 
$G=\langle f_{1},\ldots, f_{s+1}\rangle .$ 
In particular, $P(G)\setminus \{ \infty \} $ is bounded in $\C .$ 
Since $\cup _{i=1}^{s+1}f_{i}(K(f_{1}))\subset K(f_{1})$, 
we obtain that Int$(K(f_{1}))\subset F(G).$ 
Hence,  $P(G)\subset F(G)$ and $G$ is hyperbolic. 
Let $B=K(g_{s+1})\setminus \mbox{Int}(K(f_{1})).$ 
By taking $n$ large enough, we may assume that 
$\cup _{i=1}^{s+1}f_{i}^{-1}(B)\subset B$ and 
the sets $f_{i}^{-1}(B), i=1,\ldots, s+1$, are mutually disjoint. 
Since $\cup _{i=1}^{s+1}f_{i}^{-1}(B)\subset B$, \cite[Corollary 3.2]{MR1397693}
   implies that $J(G)\subset B.$ Hence,  
   the sets $f_{i}^{-1}(J(G)), i=1,\ldots, s+1$, are mutually disjoint. 
  Since $\cup _{i=1}^{s+1}f_{i}(K(f_{1}))\subset K(f_{1})$, 
  \cite[Remark 3.9]{s11random} 
  implies that there exists a minimal set $L$ of $G$ with $L\subset K(f_{1})$. 
  Thus,  there exist at least two minimal sets of $G$. Hence,  
  $(f_{1},\ldots, f_{s+1})$ satisfies assumptions (1)(2)(3) of our paper and 
  $P(G)\setminus \{ \infty \} $ is bounded in $\C .$ 
\end{proof}
\begin{example}
Let $g_{1}(z)=z^{2}-1$ and let $g_{i}(z)=\frac{1}{10i}z^{2}, i=2,\ldots, s+1.$ 
Then $(g_{1},\ldots, g_{s+1})$ satisfies the assumptions of Proposition~\ref{prop:pbbsany}. 
Note that $z^{2}-1$ can be replaced by any hyperbolic element $f\in {\mathcal P}$ 
with connected Julia set such that $J(f)\subset \{ z\in \C : |z|<10\}$ and 
$0\in \mbox{Int}(K(f)).$  
\end{example}

From one element $(g_{1},\ldots, g_{m})\in (\mbox{Rat})^{m}$ which 
satisfies assumptions (1)(2)(3) (with $s+1=m$), we obtain many elements 
which satisfy  assumptions (1)(2)(3) of our paper as follows. 
\begin{prop}
\label{prop:sg2}
Let $(g_{1},\ldots ,g_{m})\in (\mbox{{\em Rat}})^{m}$ with $\deg (g_{i})\geq 2, i=1,\ldots, m$, 
and suppose that $(g_{1},\ldots ,g_{m})$ satisfies assumptions (1)(2)(3) of this paper.  
Let $n\in \Bbb{N}$ with $n\geq 2$ and let $f_{1},\ldots, f_{s+1}$ be 
 mutually distinct elements of $\{ g_{\omega _{n}}\circ \cdots \circ g_{\omega _{1}}\mid 
(\omega _{1},\ldots, \omega _{n})\in \{ 1,\ldots, m\} ^{n}\} $ where $s\geq 1.$  
Then we have the following. 
\begin{itemize}
\item[{\em (I)}] 
$(f_{1},\ldots, f_{s+1})$ satisfies assumptions (1)(2)(3) of this paper. Thus  
all statements in Theorems~\ref{main-thm}, \ref{thm:multifractal}, 
\ref{thm:c0holder}, \ref{thm:less-than-one}, \ref{thm:alphachaos}  and   
Corollaries~\ref{cor:varies}, \ref{cor:ergodictheorem}, \ref{cor:Hmeas} and \ref{cor:alpha} in Section~\ref{Introduction} hold for $(f_{1},\ldots, f_{s+1})$, for every minimal set $L$  of  
$\langle f_{1},\ldots, f_{s+1}\rangle $ 
and 
for every $\vec{p}=(p_{1},\ldots ,p_{s})\in (0,1)^{s}$ with 
$\sum _{i=1}^{s}p_{i}<1.$ 
\item[{\em (II)}] 
If, in addition to the assumption, 
$(f_{1},\ldots, f_{s+1})$ $\in {\mathcal P}^{s+1}$, then statement (1) in 
Corollary~\ref{cor:nondiff} holds for 
$(f_{1},\ldots, f_{s+1})$ and for every $\vec{p}$,  and 
statement (2) in Corollary~\ref{cor:nondiff}  holds for 
$(f_{1},\ldots, f_{s+1})$ and for every $\vec{p}$ provided that 
one of  (a)(b)(c) in the assumption of Corollary~\ref{cor:nondiff} (2) holds.  

\item[{\em (III)}]
If, in addition to the assumption of our proposition, 
$(g_{1},\ldots, g_{m})\in {\mathcal P}^{m}$ and 
$P(\langle g_{1},\ldots, g_{m}\rangle )\setminus \{ \infty \} $ 
is bounded in $\C $, then 
$P(\langle f_{1},\ldots, f_{s+1}\rangle )\setminus \{ \infty \} $ is 
bounded in $\C  .$ Thus,  statement (2) in Corollary~\ref{cor:nondiff} holds 
for $(f_{1},\ldots ,f_{s+1})$ and for every $\vec{p}.$ 
\end{itemize} 
\end{prop}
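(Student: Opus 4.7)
The plan is to derive (I), (II), (III) from the semigroup containment $G \subset H$, where $G := \langle f_1, \ldots, f_{s+1}\rangle$ and $H := \langle g_1, \ldots, g_m\rangle$. Since every $f_i$ is an $n$-fold composition of the $g_j$, this containment is immediate and delivers the basic inclusions $F(H) \subset F(G)$, $J(G) \subset J(H)$, and $P(G) \subset P(H)$. Hyperbolicity of $G$ then follows at once from the chain $P(G) \subset P(H) \subset F(H) \subset F(G)$, where the middle inclusion uses the hyperbolicity of $H$.

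For the separating condition, for each $i \in \{1,\dots, s+1\}$ I fix a representing word $\omega^{(i)} = (\omega^{(i)}_1, \ldots, \omega^{(i)}_n) \in \{1, \ldots, m\}^n$ with $f_i = g_{\omega^{(i)}_n}\circ \cdots \circ g_{\omega^{(i)}_1}$; since the $f_i$ are mutually distinct as maps, any such choice yields mutually distinct words. I would then argue by contradiction: suppose $z \in f_i^{-1}(J(H)) \cap f_j^{-1}(J(H))$ for some $i \neq j$. Backward invariance of $J(H)$ under every $g \in H$ forces all intermediate iterates $g_{\omega^{(i)}_k}\circ \cdots \circ g_{\omega^{(i)}_1}(z)$ (and similarly for $j$) to lie in $J(H)$; in particular $z$ lies in both $g_{\omega^{(i)}_1}^{-1}(J(H))$ and $g_{\omega^{(j)}_1}^{-1}(J(H))$, so the separating condition for $H$ gives $\omega^{(i)}_1 = \omega^{(j)}_1$. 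Iterating one level at a time yields $\omega^{(i)}_k = \omega^{(j)}_k$ for every $k$, contradicting the distinctness of the representing words. Hence $f_i^{-1}(J(H)) \cap f_j^{-1}(J(H)) = \emptyset$, and the separating condition for $G$ follows from $J(G) \subset J(H)$. I expect this iterative peeling to be the main technical step, but it should be mechanical once set up correctly.

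For the existence of at least two minimal sets of $G$, let $L_1, L_2$ be two distinct (hence disjoint) minimal sets of $H$. Since each $L_k$ is forward-invariant under every $g \in H$ and $G \subset H$, the set $L_k$ is also forward-invariant under every $f \in G$, so \cite[Remark 3.9]{s11random} produces a non-empty minimal set $L_k'$ of $G$ contained in $L_k$. As $L_1 \cap L_2 = \emptyset$, the minimal sets $L_1'$ and $L_2'$ are distinct, completing the verification of assumptions (1)(2)(3) and hence (I).

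Claim (II) then follows immediately: with the added polynomial hypothesis and the three assumptions in hand, Corollary~\ref{cor:nondiff}(1) applies for every $\vec{p}$, and if one of (a)(b)(c) holds one may further invoke Corollary~\ref{cor:nondiff}(2). Claim (III) follows at once from $P(G) \subset P(H)$: the boundedness of $P(H) \setminus \{\infty\}$ in $\C$ transfers to $P(G) \setminus \{\infty\}$, which is precisely condition (b) of Corollary~\ref{cor:nondiff}(2), so that part of the corollary applies for every $\vec{p}$.
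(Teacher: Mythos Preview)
Your proof is correct and follows essentially the same route as the paper's: the containment $G\subset H$ gives $P(G)\subset P(H)\subset F(H)\subset F(G)$ for hyperbolicity, forward invariance of the minimal sets of $H$ together with \cite[Remark 3.9]{s11random} yields two minimal sets for $G$, and $P(G)\subset P(H)$ handles (III). For the separating condition the paper simply asserts that distinct length-$n$ words $\omega$ give mutually disjoint sets $(g_{\omega_n}\circ\cdots\circ g_{\omega_1})^{-1}(J(H))$; your iterative peeling argument is exactly the verification of this assertion, so the two proofs coincide in content.
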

\begin{proof}
Let $H=\langle g_{1},\ldots, g_{m}\rangle $ and let 
$G=\langle f_{1},\ldots, f_{s+1}\rangle .$ Then 
$G$ is a subsemigroup of $H.$ Hence,  
$F(H)\subset F(G)$ and $P(G)\subset P(H).$ 
Since $H$ is hyperbolic, we have 
$P(G)\subset P(H)\subset F(H)\subset F(G).$ Thus,  
$G$ is hyperbolic.  Hence,  $(f_{1},\ldots, f_{s+1})$ satisfies assumption (1) of our paper. 
Since the sets $ g_{i}^{-1}(J(H)): i=1,\ldots, m, $ are mutually disjoint, 
we have that the sets 
$ (g_{\omega _{n}}\circ \cdots \circ g_{\omega _{1}})^{-1}(J(H)),  
(\omega _{1},\ldots, \omega _{n})\in \{ 1,\ldots, m\} ^{n}, $ are mutually disjoint. 
Since $J(G)\subset J(H)$, it follows that 
the sets $f_{i}^{-1}(J(G)), i=1,\ldots, s+1, $ are mutually disjoint. 
Hence $(f_{1},\ldots, f_{s+1})$ satisfies assumption (2) of our paper. 
Since $(g_{1},\ldots, g_{m})$ satisfies assumption (3) of our paper, 
there exist at least two distinct minimal sets $L_{1}$ and $L_{2}$ of 
$\langle g_{1},\ldots, g_{m}\rangle. $ 
Therefore for every $g\in \langle g_{1},\ldots,g_{m}\rangle $ and for every $i=1,2$, 
we have $g(L_{i})\subset L_{i}.$ In particular, for  every 
$f\in \langle f_{1},\ldots ,f_{s+1}\rangle $, 
$f(L_{i})\subset L_{i}.$ 
By 
\cite[Remark 3.9]{s11random} 
it follows that for every $i=1,2,$ there exists a minimal set
$L_{i}'$ of $\langle f_{1},\ldots ,f_{s+1}\rangle $ with $L_{i}'\subset L_{i}.$ 
Hence,  $(f_{1},\ldots, f_{s+1})$ satisfies assumption (3) of our paper. 
If, in addition to the assumption of our proposition, 
$(g_{1},\ldots, g_{m})\in {\mathcal P}^{m}$ and 
$P(H)\setminus \{ \infty \} $ 
is bounded in $\C $, then since $P(G)\setminus \{ \infty \} 
\subset P(H)\setminus \{ \infty \} $, we obtain that 
$P(G)\setminus \{ \infty  \} $ is 
bounded in $\C  .$ 
\end{proof} 
Regarding Remark~\ref{rem:assump}, we also have the following. 
\begin{lem}
\label{lem:pbbopen}
Let $s\geq 1$ and let $I=\{ 1,\ldots, s+1\}.$ Then the set 
$$\{ (f_{i})_{i\in I}\in {\mathcal P}^{I}: (f_{i})_{i\in I} \mbox{ satisfies assumptions (1)(2)(3) and }
P(\langle f_{1},\ldots ,f_{s+1}\rangle )\setminus \{ \infty \} \mbox{ is bounded in }\C \}$$ 
is open in ${\mathcal P}^{I}.$ 

\end{lem}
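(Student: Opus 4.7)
The plan is the following. By Remark~\ref{rem:assump} the subset of $(\Rat)^{I}$ defined by $\deg(f_{i})\ge 2$ for all $i\in I$ together with conditions (1)(2)(3) is open in $(\Rat)^{I}$; since $\mathcal{P}^{I}$ carries the relative topology, this piece is automatically open in $\mathcal{P}^{I}$. It therefore suffices to prove that the extra condition ``$P(\langle f_{1},\ldots,f_{s+1}\rangle)\setminus\{\infty\}$ is bounded in $\C$'' is also open in $\mathcal{P}^{I}$.

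So fix $(f_{i})_{i\in I}\in\mathcal{P}^{I}$ satisfying (1)(2)(3) with $K_{0}:=P(G)\setminus\{\infty\}$ bounded in $\C$, where $G:=\langle f_{1},\ldots,f_{s+1}\rangle$. Since $G$ is hyperbolic one has $P(G)\subset F(G)$, and the standard construction of a forward-invariant trapping neighbourhood of $P(G)$ for hyperbolic rational semigroups (compare \cite[Lemma~1.1.4]{sumihyp} and \cite[Theorem~3.15]{s11random}) yields a compact set $V\subset F(G)$ with $P(G)\subset\Int(V)$ and $\bigcup_{i\in I}f_{i}(V)\subset\Int(V)$. Because $K_{0}$ is bounded in $\C$ the sets $K_{0}$ and $\{\infty\}$ are at positive spherical distance in $\Chat$, and since every polynomial $f_{i}$ sends $\C$ into $\C$ and has $\infty$ as a super-attracting fixed point, I may shrink $V$ to a disjoint union $V=V_{1}\sqcup V_{2}$, with $V_{1}\subset\C$ a compact neighbourhood of $K_{0}$, $V_{2}$ a closed spherical disc about $\infty$ sitting inside the immediate basin of $\infty$ under each $f_{i}$, and $f_{i}(V_{j})\subset\Int(V_{j})$ for every $i\in I$ and $j\in\{1,2\}$.

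For the perturbation step, I would use that on $\mathcal{P}^{I}$ the Rat-topology agrees on each stratum of fixed degrees with the usual coefficient topology, since a Rat-limit of polynomials of degree $d$ is again of degree $d$ (a drop or jump in degree would produce a spherical discontinuity at $\infty$). Thus for $(g_{i})_{i\in I}\in\mathcal{P}^{I}$ sufficiently close to $(f_{i})_{i\in I}$, each $g_{i}$ has the same degree as $f_{i}$, converges to $f_{i}$ uniformly on $\Chat$ in spherical metric, and has critical points and finite critical values close to those of $f_{i}$. Compactness of $V_{1},V_{2}$ together with the open inclusions $f_{i}(V_{j})\subset\Int(V_{j})$ then force $g_{i}(V_{j})\subset\Int(V_{j})$ for every $i,j$, while the continuity of critical values places the finite critical values of every $g_{i}$ inside $\Int(V_{1})$. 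Iterating along words in $H:=\langle g_{1},\ldots,g_{s+1}\rangle$, the full forward $H$-orbit of every finite critical value of every $g_{i}$ remains in $V_{1}$; taking closures gives $P(H)\setminus\{\infty\}\subset V_{1}$, which is bounded in $\C$, as required.

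The main obstacle is justifying the continuity of the critical values under the relative Rat-topology on $\mathcal{P}$. This is exactly where working inside $\mathcal{P}^{I}$ rather than in $(\Rat)^{I}$ matters: an unconstrained Rat-perturbation of a polynomial need not stay a polynomial and the degree at $\infty$ can change, but within $\mathcal{P}^{I}$ the topology restricts on each degree stratum to coefficient convergence, which makes the continuity of critical points (roots of $g_{i}'$) and hence of critical values a standard consequence of the implicit function theorem. Once this is in place, the invariance of $V_{1}$ under all $g_{i}$ and the orbit induction controlling $P(H)\cap\C$ are routine.
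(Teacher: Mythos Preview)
Your proposal is correct. The paper's own proof simply cites \cite[Lemma~5.4]{s11pb3} for the statement that, within $\mathcal{P}^{I}$, the conjunction ``hyperbolic and $P(G)\setminus\{\infty\}$ bounded'' is open, and then intersects with the open set from Remark~\ref{rem:assump}; your argument is essentially a direct proof of that cited lemma via the forward-invariant trapping neighbourhood $V=V_{1}\sqcup V_{2}$ and its stability under uniform perturbation, so the two approaches coincide in substance.
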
  

\begin{proof}
By \cite[Lemma 5.4]{s11pb3}, we have that 
the set of elements $(f_{i})_{i\in I}\in {\mathcal P}^{I}$ for which  
assumption (1) holds and $P(\langle f_{1},\ldots ,f_{s+1}\rangle )\setminus 
\{ \infty \} $ is bounded is open in 
${\mathcal P}^{I}.$ 
Combining this with Remark~\ref{rem:assump}, 
we see that the statement of our lemma holds.  
\end{proof} 
We remark that the above examples, propositions and lemma in this section and Remark~\ref{rem:assump} 
imply that we have plenty of examples to which we can apply the results in Section~\ref{Introduction}. 

We give  examples to which we can apply Corollary~\ref{cor:alpha}. 
\begin{lem}
\label{lem:ndevery}
Let $(g_{1},\ldots, g_{s+1})$ be an element which satisfies assumptions (1)(2)(3). 
Let $\vec{p}=(p_{i})_{i=1}^{s}\in (0,1)^{s}$ with $\sum _{i=1}^{s+1}p_{i}<1.$ 
Let $p_{s+1}=1-\sum _{i=1}^{s}p_{i}.$ 
Then there exists an $m\in \N $ such that for every $n\in \N $ with $n\geq m$, 
setting $f_{i}=g_{i}^{n}, i=1\ldots, s+1$, and setting $G:=\langle f_{1},\ldots, f_{s+1}\rangle $,  we have that 
$(f_{1},\ldots, f_{s+1})$ satisfies assumptions (1)(2)(3) and  
$p_{i}\min _{z\in f_{i}^{-1}(J(G))}\| f_{i}'(z)\| >1$ for every $i=1,\ldots ,s+1$. 
Thus,  for every  minimal set $L$ of 
$\langle f_{1},\ldots, f_{s+1}\rangle $, and for every $z\in J(G)$, we have that 
every non-trivial $C\in \T $ satisfies $\Hol(C,z)\leq \alpha _{+}<1$ and 
$C$ is not differentiable at $z.$   
\end{lem}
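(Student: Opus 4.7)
The plan is to combine Proposition~\ref{prop:hypsub}, which already produces the structural assumptions (1)(2)(3) for iterated systems, with the uniform exponential expansion that comes with hyperbolicity of the original semigroup $H:=\langle g_{1},\ldots,g_{s+1}\rangle$, and then to read off the conclusion from Corollary~\ref{cor:alpha}. First I would observe that the separating condition (2) for $(g_{1},\ldots,g_{s+1})$ forces $J(g_{i})\cap J(g_{j})=\emptyset$ whenever $i\ne j$: indeed $J(g_{i})\subset J(H)$ and $J(g_{i})$ is $g_{i}$-invariant, so $J(g_{i})\subset g_{i}^{-1}(J(H))$, and the separation of the sets $g_{i}^{-1}(J(H))$ does the rest. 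Since $H$ is hyperbolic and possesses at least two distinct minimal sets, Proposition~\ref{prop:hypsub} supplies $m_{1}\in\N$ such that for every $n\geq m_{1}$, the tuple $(f_{1},\ldots,f_{s+1})$ with $f_{i}=g_{i}^{n}$ satisfies (1)(2)(3).

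The main step is to promote this to a quantitative expansion estimate. Hyperbolicity of $H$ together with compactness of $J(H)$ yields constants $C>0$ and $\lambda>1$ such that for every word $\omega=(\omega_{1},\ldots,\omega_{k})\in I^{k}$ and every $z\in (g_{\omega_{k}}\circ\cdots\circ g_{\omega_{1}})^{-1}(J(H))$,
\[
\|(g_{\omega_{k}}\circ\cdots\circ g_{\omega_{1}})'(z)\|\geq C\lambda^{k};
\]
this is the standard exponential-expansion consequence of hyperbolicity, the same input behind the existence of $k_{0}$ in Corollary~\ref{cor:alpha} (see \cite[Theorem 2.6]{MR1625124}). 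Applying the estimate to the constant word $(i,i,\ldots,i)$ of length $n$ yields $\|f_{i}'(z)\|\geq C\lambda^{n}$ for every $z\in f_{i}^{-1}(J(H))$ and every $i\in I$. Writing $G:=\langle f_{1},\ldots,f_{s+1}\rangle$, the inclusion $G\subset H$ of semigroups gives $F(H)\subset F(G)$ and hence $J(G)\subset J(H)$, so $f_{i}^{-1}(J(G))\subset f_{i}^{-1}(J(H))$ and the same lower bound remains valid on the smaller set $f_{i}^{-1}(J(G))$.

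To finish, I would enlarge $m_{1}$ to some $m\geq m_{1}$ satisfying $(\min_{i\in I}p_{i})\,C\lambda^{m}>1$, which is possible because each $p_{i}>0$ and $\lambda>1$. For every $n\geq m$ and every $i\in I$ one then has
\[
p_{i}\min_{z\in f_{i}^{-1}(J(G))}\|f_{i}'(z)\|\geq p_{i}\,C\lambda^{n}>1,
\]
so Corollary~\ref{cor:alpha} applies and gives, for every minimal set $L$ and every non-trivial $C\in\T$, the bound $\Hol(C,z)\leq\alpha_{+}<1$ and non-differentiability of $C$ at every $z\in J(G)$. The only delicate point in the plan is the exponential expansion: I use it as a statement about the original semigroup $H$ and transfer it to the new semigroup $G$ via the inclusion $J(G)\subset J(H)$, which I expect to be the main technical issue, but the inclusion makes it immediate once the expansion for $H$ is in hand.
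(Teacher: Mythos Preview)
Your proposal is correct and follows essentially the same line as the paper: verify that the iterated tuple satisfies (1)(2)(3), use the uniform expansion of the hyperbolic semigroup $H$ on $J(H)$ together with $J(G)\subset J(H)$ to force $p_i\min\|f_i'\|>1$ for large $n$, and then invoke Corollary~\ref{cor:alpha}. The only cosmetic difference is that the paper cites Proposition~\ref{prop:sg2} (which directly takes assumptions (1)(2)(3) on $(g_1,\dots,g_{s+1})$ as input) rather than Proposition~\ref{prop:hypsub}; your extra step deriving $J(g_i)\cap J(g_j)=\emptyset$ from the separating condition is correct and makes Proposition~\ref{prop:hypsub} applicable as well.
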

\begin{proof}
By Proposition~\ref{prop:sg2}, there exists an $m\in \N $ such that 
for every $n\in \N $ with $n\geq m$, 
setting $f_{i}=g_{i}^{n}, i=1\ldots, s+1$, 
we have that $(f_{1},\ldots, f_{s+1})$ satisfies assumptions (1)(2)(3). 
Since $H:=\langle g_{1},\ldots ,g_{s+1}\rangle $ is hyperbolic, the expanding property of 
$H$ 
on $J(H)$ (\cite[Theorem 2.6]{MR1625124}) implies that 
if $n$ is large enough, then 
$p_{i}\min _{z\in f_{i}^{-1}(J(G))}\| f_{i}'(z)\| >1$ for every $i=1,\ldots ,s+1$, 
where $G=\langle f_{1},\ldots ,f_{s+1}\rangle .$  
Combining this with Corollary~\ref{cor:alpha}, we obtain that,  
  for  each  minimal set $L$ of 
$G$, and for every $z\in J(G)$, we have that 
every non-trivial $C\in \T $ satisfies $\Hol(C,z)\leq \alpha _{+}<1$ and 
$C$ is not differentiable at $z.$   
\end{proof}
\section{Preliminaries}
\label{Preliminaries} 

In this section, we recall some fundamental facts on rational semigroups and 
random complex dynamics which are needed in the proofs of the main results of this paper. 

Let $G$ be a rational semigroup and let $z\in \Chat $. 
The backward orbit $G^{-}(z)$ of $z$ and the set of exceptional points 
$E(G)$ are defined by $G^{-}(z):=
\cup _{g\in G}g^{-1}(z)$ and 
$E(G):=\{ z\in \Chat :\mbox{card}(G^{-}(z))<\infty \} .$ 
We say that a set $A\subset \Chat $ is $G$-backward invariant, 
if $g^{-1}(A)\subset A$ for each $g\in G$, and we say that 
$A$ is $G$-forward invariant, if $g(A)\subset A$, for each $g\in G.$ 

The following was proved in \cite{MR1397693} (see also  \cite[Lemma 2.3]{MR1767945}, 
\cite{MR2900562}). 

\begin{lem}Let $G$ be a rational semigroup which has an element of degree two or more. Then we have the following.
\begin{itemize}
\item[{\em (a)}] 
$F(G)$ is $G$-forward invariant and $J(G)$ is $G$-backward invariant. 

\item[{\em (b)}]
$J(G)$ is a perfect set, 
\item[{\em (c)}] 
card$(E(G))\leq 2$.
\item[{\em (d)}] 
If $z\in \Chat \setminus E(G)$, then 
$J(G)\subset \overline{G^{-}(z)}. $ 
In particular, if $z\in J(G)\setminus E(G)$, then 
$\overline{G^{-}(z)}=J(G).$  
\item[{\em (e)}] 
$J(G)$ is the smallest closed subset of $\Chat$ 
containing at least three points which is $G$-backward invariant. 
\item[{\em (f)}]

$J(G)=\overline{\{ z\in \Chat: z\mbox{ is a repelling fixed point of some }g\in G\} }
=\overline{\cup _{g\in G}J(g)}.$ 

\end{itemize}

\end{lem}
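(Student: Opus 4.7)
The plan is to establish the six items by standard rational-semigroup techniques, handling (a)--(c) directly, then (d) via Montel, and finally (e),(f) as consequences.

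First I would prove (a) via equicontinuity transport. Let $z\in F(G)$ and $h\in G$, and let $U$ be an open neighborhood of $z$ on which $G$ is equicontinuous in the spherical metric. Because $h$ is a nonconstant rational map, it is open, so $h(U)$ is a neighborhood of $h(z)$. For any $\varepsilon>0$, uniform continuity of $G$ at $z$ gives $\eta>0$ with $d(y,z)<\eta\Rightarrow d(g(y),g(z))<\varepsilon$ for every $g\in G$, and I can shrink to get some ball $B(h(z),\delta)\subset h(B(z,\eta))$. Then every $w\in B(h(z),\delta)$ is $h(y)$ for some $y\in B(z,\eta)$, and applying the semigroup property $g\circ h\in G$ yields $d(g(w),g(h(z)))=d((g\circ h)(y),(g\circ h)(z))<\varepsilon$. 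This shows $h(z)\in F(G)$ (forward invariance of $F(G)$), which is the same as backward invariance of $J(G)$.

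Next, (c) follows from the observation $E(G)\subset E(g)$ for any $g\in G$ with $\deg(g)\geq 2$ (if $G^{-}(z)$ is finite, so is the subset $\bigcup_n g^{-n}(z)$), combined with the classical $|E(g)|\leq 2$. For (d), fix $w\in J(G)$ and an open neighborhood $U$ of $w$. Since $z\notin E(G)$, $G^{-}(z)$ is infinite, so it contains three distinct points $z_1,z_2,z_3$. If $G|_U$ avoided all three, Montel's theorem would force $G|_U$ to be normal, contradicting $w\in J(G)$; hence some $g\in G$ satisfies $g(y)=z_i$ for some $y\in U$, and composing with $h\in G$ with $h(z_i)=z$ shows $y\in (h\circ g)^{-1}(z)\cap U\subset G^{-}(z)\cap U$. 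From (d) one reads off (b): take any $w\in J(G)$; since $|E(G)|\leq 2$ one can pick $z\in J(G)\setminus E(G)$, and (d) says $G^{-}(z)\setminus\{w\}$ accumulates at $w$, with $G^{-}(z)\subset J(G)$ by backward invariance.

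For (e), let $A$ be a closed $G$-backward invariant set with $|A|\geq 3$. Since $|E(G)|\leq 2$, choose $z\in A\setminus E(G)$; backward invariance gives $G^{-}(z)\subset A$, and (d) gives $J(G)\subset\overline{G^{-}(z)}\subset A$. For (f), the inclusion $\bigcup_{g\in G} J(g)\subset J(G)$ is immediate since any $z\in J(g)$ is a non-normality point for $\{g^n\}\subset G$; so also the closure lies in $J(G)$, and the set of repelling fixed points of elements of $G$ is contained in $\bigcup_g J(g)$ by the classical density theorem for single maps. For the reverse inclusion, I would invoke the minimality characterization (e): set $B:=\overline{\bigcup_{g\in G}\{\text{repelling fixed points of }g\}}$; each $J(g)$ is infinite so $|B|\geq 3$. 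To verify $G$-backward invariance of $B$, take a repelling fixed point $p$ of $g$ and $h\in G$; then for the map $\tilde g:=g\circ h^k$ with $k$ chosen so that $\tilde g$ has an expanding fixed point above $h^{-1}(p)$, one uses the standard fact that repelling cycles are dense in $J(\tilde g)\subset J(G)$ together with $h^{-1}(p)\subset J(g\circ h)$ (arguing via backward invariance of $J(g\circ h)$ and $g(h(h^{-1}(p)))=g(p)=p\in J(g\circ h)$ using that $J(g)\subset J(\langle g,h\rangle)$ and combining with an approximation of preimages by repelling cycles of various composites) to place each point of $h^{-1}(p)$ in $B$. The subtle step here, and the one I expect to be the main obstacle, is this verification that $B$ is backward invariant under every $h\in G$ without circularity; the cleanest way is to combine (d) with the repelling-density theorem for a single hyperbolic map applied to well-chosen compositions in $G$, after which (e) closes the argument, yielding $J(G)\subset B\subset\overline{\bigcup_g J(g)}\subset J(G)$.
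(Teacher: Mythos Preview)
The paper does not prove this lemma at all: it simply cites Hinkkanen--Martin \cite{MR1397693} (and \cite{MR1767945}, \cite{MR2900562}) and moves on. So there is no ``paper's proof'' to compare against; the question is whether your argument stands on its own.

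Your treatment of (a)--(e) is essentially correct and standard. The gap is in (f), and you have correctly identified where it is: the backward invariance of $B=\overline{\{\text{repelling fixed points of elements of }G\}}$. Your argument for this step does not work. You write that $p\in J(g\circ h)$ ``using that $J(g)\subset J(\langle g,h\rangle)$'', but $J(\langle g,h\rangle)$ is not $J(g\circ h)$, so this does not place $p$ in $J(g\circ h)$; consequently the chain of inclusions you need for $h^{-1}(p)\subset B$ collapses. The attempted fix via $\tilde g=g\circ h^k$ is vague and does not produce a repelling fixed point at (or arbitrarily near) a prescribed preimage $q\in h^{-1}(p)$: there is no mechanism forcing $\tilde g$ to fix $q$, and ``density of repelling cycles in $J(\tilde g)$'' only helps once you already know $q\in J(\tilde g)$, which you have not shown.

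The standard route (Hinkkanen--Martin, Theorem~3.1) avoids backward invariance of $B$ entirely and proves density of repelling fixed points in $J(G)$ directly, by adapting the classical single-map argument: given $w\in J(G)$ and a small disk $U\ni w$, one uses non-normality and Montel to find $g\in G$ with $g(U)\supset \overline{U}$ (after shrinking and avoiding exceptional points), so that an inverse branch of $g$ contracts $\overline{U}$ into itself and the resulting fixed point of $g$ in $U$ is repelling. Once density of repelling fixed points is established, both equalities in (f) follow immediately from the easy inclusions you already noted. If you want to keep your minimality-via-(e) strategy, it is cleaner to apply it to $\overline{\bigcup_{g\in G}J(g)}$ rather than to $B$: backward invariance of this larger set follows from $h^{-1}(J(g))\subset J(g\circ h)$ for $g,h\in G$, which is an elementary consequence of the fact that non-normality of $\{g^n\}$ at $h(q)$ forces non-normality of $\{(g\circ h)^n\}$ at $q$ (since $(g\circ h)^n = g\circ (h\circ g)^{n-1}\circ h$ and one controls via subsequences).
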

 The following lemma (\cite[Lemma 1.1.4]{sumihyp}) is easy to see but important. 
 \begin{lem}
 Let $G$ be a rational semigroup generated by 
 $\{ f_{1},\ldots, f_{m}\}.$ Then 
 $J(G)=\cup _{j=1}^{m}f_{j}^{-1}(J(G)).$ 
  \end{lem}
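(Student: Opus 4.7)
The plan is to prove the two inclusions separately, using the preceding lemma on properties of $J(G)$, in particular backward invariance (part (a)) and the characterisation of $J(G)$ as the smallest closed backward invariant set with at least three points (part (e)).

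For the inclusion $\cup_{j=1}^{m} f_{j}^{-1}(J(G)) \subset J(G)$, I would simply invoke $G$-backward invariance of $J(G)$. Since each generator $f_{j}$ lies in $G$, part (a) of the previous lemma gives $f_{j}^{-1}(J(G)) \subset J(G)$ for every $j$, and taking the union over $j$ yields the desired inclusion.

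For the reverse inclusion $J(G) \subset \cup_{j=1}^{m} f_{j}^{-1}(J(G))$, I would set $A := \cup_{j=1}^{m} f_{j}^{-1}(J(G))$ and verify the three hypotheses of part (e). First, $A$ is closed as a finite union of closed sets (each $f_{j}$ is continuous on $\Chat$ and $J(G)$ is closed). Second, $A$ contains at least three points: since $J(G)$ is perfect and in particular infinite, and each $f_{j}$ is surjective onto $\Chat$, already $f_{1}^{-1}(J(G))$ is infinite. Third, $A$ is $G$-backward invariant: it suffices to check backward invariance under each generator, so let $k \in \{1,\dots,m\}$; since $A \subset J(G)$ by the first inclusion, and $J(G)$ is $G$-backward invariant, we have
\[
f_{k}^{-1}(A) \subset f_{k}^{-1}(J(G)) \subset A,
\]
where the last inclusion holds because $f_{k}^{-1}(J(G))$ is one of the sets making up the union that defines $A$. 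An arbitrary $g \in G$ is a composition of generators, so backward invariance under all generators implies backward invariance under $G$. Hence $A$ satisfies the three hypotheses of part (e), and the minimality statement there gives $J(G) \subset A$.

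The argument is essentially bookkeeping once the previous lemma is in hand, so there is no real obstacle; the only subtle point is noticing that to verify backward invariance of $A$, the cleanest route is to piggyback on the already-established inclusion $A \subset J(G)$ rather than trying to check $f_{k}^{-1}(f_{j}^{-1}(J(G))) \subset A$ directly for each pair $(j,k)$.
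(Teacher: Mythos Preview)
Your argument is correct. The paper itself does not supply a proof of this lemma; it merely records the statement as \cite[Lemma 1.1.4]{sumihyp} and remarks that it is ``easy to see but important.'' Your proof is the standard one and uses precisely the tools made available by the preceding lemma, namely backward invariance of $J(G)$ and its characterisation as the smallest closed $G$-backward invariant set with at least three points. One small remark: the preceding lemma is stated under the hypothesis that $G$ contains an element of degree at least two, so strictly speaking your use of parts (b) and (e) presupposes this; in the paper's setting all generators have degree $\ge 2$, so this is harmless, but it is worth being aware that the argument as written does not cover the degenerate case where every generator is a M\"obius map.
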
 
  
We remark that by \cite{MR1625124} and \cite[Remark 5]{MR2153926},  assumption (1) of this paper 
is equivalent to 
the property that the associated skew product map is expanding in the sense of \cite{MR2153926} and  \cite{JS13b}.  Combining assumptions (1)(2) of our paper and \cite[Theorem 2.14 (2), Lemma 2.4]{MR1827119}, 
we obtain the following.  
\begin{lem}
\label{lem:exp}
Suppose that $(f_{1},\ldots. f_{s+1})$ satisfies assumptions (1)(2) of our paper. 
Let $G=\langle f_{1},\ldots, f_{s+1}\rangle $, 
let $I=\{ 1,\ldots, s+1\} $ and let $\tilde{f}$ be the skew product map 
associated with $(f_{1},\ldots, f_{s+1}).$ 
Then $J(\tilde{f})=\cup _{\omega \in I^{\Bbb{N}}}(\{ \omega \} \times J_{\omega })$ 
and $J(G)=\bigsqcup _{\omega \in I^{\Bbb{N}}}J_{\omega }$, 
where $\bigsqcup $ denotes the disjoint union. 
Also, for every $\omega =(\omega _{i})_{i\in \Bbb{N}}\in I^{\Bbb{N}}$, we have 
$f_{\omega _{1}}(J_{\omega })=J_{\sigma (\omega )}$ and 
$f_{\omega _{1}}^{-1}(J_{\sigma (\omega )})=J_{\omega }.$ 
\end{lem}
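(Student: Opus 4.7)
The plan is to derive all three assertions by combining hyperbolicity and the separating condition with the general skew-product theory of \cite[Theorem 2.14(2), Lemma 2.4]{MR1827119} together with facts already recorded in the introduction. The three claims really decompose into one analytic input (closedness of the graph of $\omega\mapsto J_\omega$) and two purely formal consequences.

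First I would obtain $J(\tilde f)=\bigcup_{\omega\in I^{\N}}\{\omega\}\times J_\omega$ without the closure. For a general skew product of rational maps one has $J(\tilde f)=\overline{\bigcup_{\omega}\{\omega\}\times J_\omega}$ (closure taken in $I^{\N}\times\Chat$). Under assumption (1), the results in \cite{MR1827119} give that $\omega\mapsto J_\omega$ is continuous in the Hausdorff metric on $\Chat$; since $I^{\N}$ is compact, this implies that the graph $\bigcup_{\omega}\{\omega\}\times J_\omega$ is already closed in $I^{\N}\times\Chat$, so the closure is redundant.

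Next, for $J(G)=\bigsqcup_{\omega}J_\omega$ I would combine the first assertion with the fact, already recorded in the introduction, that $\pi|_{J(\tilde f)}:J(\tilde f)\to J(G)$ is a homeomorphism under assumption (2) (via \cite[Lemma 4.5]{s11random} and \cite[Lemma 1.1.4]{sumihyp}). Since $\pi(\{\omega\}\times J_\omega)=J_\omega$, applying $\pi$ to the first equality yields $J(G)=\bigcup_{\omega}J_\omega$, and injectivity of $\pi$ on $J(\tilde f)$ forces the union to be disjoint: if $z\in J_\omega\cap J_{\omega'}$ with $\omega\neq\omega'$, then $(\omega,z)$ and $(\omega',z)$ would be two distinct preimages of $z$ under $\pi$, contradicting injectivity.

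Finally, the invariance identities follow from the full invariance $\tilde f(J(\tilde f))=\tilde f^{-1}(J(\tilde f))=J(\tilde f)$, noted in the introduction (see \cite{MR1767945}). Writing $\tilde f(\omega,z)=(\sigma(\omega),f_{\omega_1}(z))$, this translates fiberwise into the equivalence
\[
z\in J_\omega \iff (\omega,z)\in J(\tilde f)\iff \tilde f(\omega,z)\in J(\tilde f)\iff f_{\omega_1}(z)\in J_{\sigma(\omega)},
\]
which gives $J_\omega=f_{\omega_1}^{-1}(J_{\sigma(\omega)})$. The forward-image identity follows from the same equivalence: $f_{\omega_1}(J_\omega)\subseteq J_{\sigma(\omega)}$ is immediate, and conversely, for any $w\in J_{\sigma(\omega)}$ any preimage $z\in f_{\omega_1}^{-1}(w)$ satisfies $z\in J_\omega$ by the equivalence, so $f_{\omega_1}(J_\omega)\supseteq J_{\sigma(\omega)}$. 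The only genuinely nontrivial step is the closedness of $\bigcup_{\omega}\{\omega\}\times J_\omega$ in the first assertion, which is where hyperbolicity is indispensable; separation then delivers the injectivity of $\pi$ required for disjointness, after which the fiberwise invariance relations are formal.
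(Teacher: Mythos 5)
Your argument is correct and follows essentially the same route as the paper, which gives no explicit proof but simply cites the combination of assumptions (1)(2) with \cite[Theorem 2.14(2), Lemma 2.4]{MR1827119}; your writeup is a faithful unpacking of that citation into its one analytic ingredient (closedness of $\bigcup_{\omega}\{\omega\}\times J_{\omega}$ via Hausdorff-continuity of $\omega\mapsto J_{\omega}$ under hyperbolicity) and the formal consequences drawn from the homeomorphism $\pi|_{J(\tilde f)}$ and the full invariance of $J(\tilde f)$.
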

We remark that $\pi \circ \tilde{f}=\sigma \circ \pi $ and 
$\tilde{f}^{-1}(J(\tilde{f}))=J(\tilde{f})=\tilde{f}(J(\tilde{f}))$ 
(\cite{MR1767945}). 
We also remark that  by Zorn's lemma, there always exists a minimal set of $G$. 

For the fundamental tools and recent results of complex dynamics, see 
\cite{s11random, S13Coop}. 

\section{System of Functional Equations and Estimates}
\label{Functional}
In this section, 
we describe the system of functional equations for the elements of 
${\mathcal C}$ and we estimate the growth order of components of associated matrix cocycles $A(\omega ,k).$ 
More precisely, in Lemma~\ref{lem:matrix-growth} we show that 
every component of $A(\omega ,k)$ is of polynomial order with 
respect to $k$. Also, in some special cases we determine the precise 
polynomial growth rate.  

Let $(f_{1},\ldots, f_{s+1})\in (\mbox{Rat})^{s+1}$ be an element 
satisfying assumptions (1)(2)(3) of this paper and let $\vec{p}=(p_{i})_{i=1}^{s}\in (0,1)^{s}$ with $\sum _{i=1}^{s}p_{i}<1.$ 
Let $p_{s+1}=1-\sum _{i=1}^{s}p_{i}.$ 
Recall that the transition operator $M:C(\Chat )\rightarrow C(\Chat)$ of 
the random dynamical system generated by $(f_{1},\ldots, f_{s+1})$ and 
$\vec{p}$ in Section~\ref{Introduction} is defined by 
$M(h):=\sum_{i=1}^{s+1}p_{i}\cdot(h\circ f_{i}),\ h\in C(\Chat).$ 
Recall from \cite{s11random} that $M(C_{0})=C_{0}$. \foreignlanguage{british}{Next
lemma gives a system of functional equations for the elements of $\T$. }
\selectlanguage{british}%
\begin{lem}
\label{lem:cn-functionalequation}
For every \foreignlanguage{english}{\textup{$\vec{n}=(n_{i})_{i=1}^{s}\in\N_{0}^{s}$}}
we have 
\begin{equation}
C_{\vec{n}}=M(C_{\vec{n}})+\sum_{i=1}^{s}n_{i}\left(C_{\vec{n}-\vec{e}_{i}}\circ f_{i}-C_{\vec{n}-\vec{e}_{i}}\circ f_{s+1}\right), \label{Cn-functionalequation}
\end{equation}
where $\vec{e}_{i}$ denotes the element of $\N_{0}^{s}$ such that 
the $i$-th component is $1$ and all the other components are $0$.  
\end{lem}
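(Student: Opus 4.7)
The plan is to differentiate the parametrized fixed-point equation for $T_{\vec{x}}$ with respect to the probability parameters and evaluate at $\vec{x}=\vec{p}$.

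First, I would observe that for $\vec{x}=(x_{1},\dots,x_{s})$ in a neighbourhood of $\vec{p}$ in $(0,1)^{s}$, and writing $x_{s+1}:=1-\sum_{i=1}^{s}x_{i}$, the function $T_{\vec{x}}$ satisfies the one-step decomposition
\[
T_{\vec{x}}(z)=\sum_{i=1}^{s+1}x_{i}\,T_{\vec{x}}(f_{i}(z)),
\]
which follows directly from conditioning on the first map chosen. By Remark~\ref{rem:motivation}, the map $\vec{x}\mapsto T_{\vec{x}}$ is real-analytic in a neighbourhood of $\vec{p}$ with values in $C^{\alpha}(\Chat)$ for some small $\alpha>0$, so the partial derivatives $D^{\vec{n}}:=\partial_{x_{1}}^{n_{1}}\cdots\partial_{x_{s}}^{n_{s}}$ of all orders exist, commute, and can be exchanged with pointwise evaluation at $z$.

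The main computation is to apply $D^{\vec{n}}$ to both sides of the functional equation. The key structural observation is that, viewed as functions of $(x_{1},\dots,x_{s})$, each $x_{i}$ (for $i=1,\dots,s+1$) is affine, so \emph{all} mixed partials of $x_{i}$ of total order $\ge2$ vanish; moreover
\[
\frac{\partial x_{i}}{\partial x_{j}}=\delta_{ij}\quad (1\le i\le s),\qquad\frac{\partial x_{s+1}}{\partial x_{j}}=-1.
\]
Consequently, applying the multivariate Leibniz rule to the product $x_{i}\cdot T_{\vec{x}}(f_{i}(z))$ only yields two kinds of surviving terms: the one in which every derivative hits the factor $T_{\vec{x}}(f_{i}(z))$, and those in which exactly one single derivative $\partial_{x_{j}}$ hits the factor $x_{i}$. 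This gives the identity
\[
D^{\vec{n}}\bigl(x_{i}\,T_{\vec{x}}(f_{i}(z))\bigr)=x_{i}\,D^{\vec{n}}T_{\vec{x}}(f_{i}(z))+\sum_{j=1}^{s}n_{j}\,\frac{\partial x_{i}}{\partial x_{j}}\,D^{\vec{n}-\vec{e}_{j}}T_{\vec{x}}(f_{i}(z)),
\]
with the convention that the $j$-summand is absent if $n_{j}=0$. I would verify this identity inductively on $|\vec{n}|$, the point being that the cross term involving $\partial_{x_{k}}\partial_{x_{j}}x_{i}$ is identically zero.

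Summing over $i\in\{1,\dots,s+1\}$ and evaluating at $\vec{x}=\vec{p}$, the first group of terms collapses to $\sum_{i=1}^{s+1}p_{i}C_{\vec{n}}(f_{i}(z))=M(C_{\vec{n}})(z)$. For the second group, interchanging the order of summation and using that $\sum_{i=1}^{s+1}\frac{\partial x_{i}}{\partial x_{j}}C_{\vec{n}-\vec{e}_{j}}(f_{i}(z))=C_{\vec{n}-\vec{e}_{j}}(f_{j}(z))-C_{\vec{n}-\vec{e}_{j}}(f_{s+1}(z))$ (the $\delta_{ij}$ picks out $i=j$ among the first $s$ indices, and the $i=s+1$ term contributes $-1$), one recovers exactly $\sum_{j=1}^{s}n_{j}\bigl(C_{\vec{n}-\vec{e}_{j}}\circ f_{j}-C_{\vec{n}-\vec{e}_{j}}\circ f_{s+1}\bigr)$. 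Combining the two parts yields (\ref{Cn-functionalequation}). There is no real obstacle here beyond organizing the Leibniz expansion; the only point that requires care is justifying the exchange of the differential operators $D^{\vec{n}}$ with pointwise evaluation, which is handled by the real-analyticity statement recalled above.
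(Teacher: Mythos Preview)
Your proof is correct and follows essentially the same approach as the paper: both differentiate the fixed-point identity $T_{\vec{x}}=\sum_{i=1}^{s+1}x_i\,T_{\vec{x}}\circ f_i$ with respect to the probability parameters and evaluate at $\vec{x}=\vec{p}$. The only cosmetic difference is that the paper proceeds by induction on $|\vec{n}|$, differentiating one variable at a time, whereas you apply $D^{\vec{n}}$ in one stroke via the multivariate Leibniz rule and the affinity of the coefficients $x_i$.
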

\begin{proof}
The proof is by induction on the order $n:=|\vec{n}|\ge0$. The case
$n=0$ follows because $C_{0}=T_{\vec{p}}$ is a fixed point of $M$. Now suppose
that the lemma holds for derivatives of order $n\ge0$. Let $j\in\left\{ 1,\dots,s\right\} $.
By taking the partial derivative with respect to $p_{j}$ on both
sides of (\ref{Cn-functionalequation}) we see that 
\begin{eqnarray*}
C_{\vec{n}+\vec{e}_{j}} & = & M\left(C_{\vec{n}+\vec{e}_{j}}\right)+C_{\vec{n}}\circ f_{j}-C_{\vec{n}}\circ f_{s+1}+\sum_{i=1}^{s}n_{i}\left(C_{\vec{n}-\vec{e}_{i}+\vec{e}_{j}}\circ f_{i}-C_{\vec{n}-\vec{e}_{i}+\vec{e}_{j}}\circ f_{s+1}\right)\\
 & = & M\left(C_{\vec{n}+\vec{e_{j}}}\right)+\left(n_{j}+1\right)\left(C_{\vec{n}}\circ f_{j}-C_{\vec{n}}\circ f_{s+1}\right)+\sum_{i=1,i\neq j}^{s}n_{i}\left(C_{\vec{n}-\vec{e}_{i}+\vec{e}_{j}}\circ f_{i}-C_{\vec{n}-\vec{e}_{i}+\vec{e}_{j}}\circ f_{s+1}\right).
\end{eqnarray*}
Hence, the equation (\ref{Cn-functionalequation}) holds for $\vec{n}+\vec{e}_{j}$
and the lemma follows by induction on $n$. \end{proof} 
In the following, any element $A\in \R ^{\N_{0}^{s}\times \N_{0}^{s}}$ is 
represented as $A=(A_{\vec{x},\vec{y}})_{(\vec{x},\vec{y})\in \N_{0}^{s}\times \N_{0}^{s}}$, 
where $A_{\vec{x},\vec{y}}\in \R $, 
  and 
such an element $A$ is called an ($\N_{0}^{s}$-)matrix.  
$A_{\vec{x},\vec{y}}$ is called the $(\vec{x},\vec{y})$-component of $A.$

\begin{defn}
\label{def:cocycle matrix}For $\omega\in I^{\N}$ we define the matrix
$A_{0}(\omega,1)\in\R^{\N_{0}^{s}\times\N_{0}^{s}}$ given by 
\[
A_{0}(\omega,1):=\begin{cases}
\sum_{\vec{n}\in\N_{0}^{s}}\big(p_{\omega_{1}}\1_{\vec{n},\vec{n}}+n_{\omega_{1}}\1_{\vec{n},\vec{n}-\vec{e}_{\omega_{1}}}\big) & ,\omega_{1}\neq s+1\\
\sum_{\vec{n}\in\N_{0}^{s}}\big(p_{\omega_{1}}\1_{\vec{n},\vec{n}}-\sum_{i=1}^{s}n_{i}\1_{\vec{n},\vec{n}-\vec{e}_{i}}\big) & ,\omega_{1}=s+1,
\end{cases}
\]
where $\1_{\vec{n},\vec{m}}\in\R^{\N_{0}^{s}\times\N_{0}^{s}}$ denotes
the matrix such that for every $(\vec{x},\vec{y})\in \N_{0}^{s}\times \N_{0}^{s}$, 
the $(\vec{x},\vec{y})$-component $(\1_{\vec{n},\vec{m}})_{\vec{x},\vec{y}}$ 
of $\1 _{\vec{n},\vec{m}}$ 
is given by 
\begin{equation} \label{1nm}
(\1_{\vec{n},\vec{m}})_{\vec{x},\vec{y}}=\begin{cases}
1 & ,\vec{n}=\vec{x}\quad\mbox{and}\quad\vec{m}=\vec{y},\\
0 & ,\mbox{else}.
\end{cases}
\end{equation}
For $\omega\in I^{\N}$ and $k\in\N$ we define the matrix $A_{0}(\omega,k)\in\R^{\N_{0}^{s}\times\N_{0}^{s}}$
given by 
\[
A_{0}(\omega,k):=A_{0}(\omega,1)A_{0}(\sigma\omega,1)\cdots A_{0}(\sigma^{k-1}\omega,1)\in\R^{\N_{0}^{s}\times\N_{0}^{s}},
\]
where the matrix product $A_{0}(\tau,1)\cdot A_{0}(\upsilon,1)\in\R^{\N_{0}^{s}\times\N_{0}^{s}}$
is for $\tau,\upsilon\in I^{\N}$ and $\vec{l},\vec{m}\in\N_{0}^{s}$
given by 
\begin{equation}
\left(A_{0}(\tau,1)\cdot A_{0}(\upsilon,1)\right)_{\vec{l},\vec{m}}:=\sum_{\vec{k}\in\N_{0}^{s}}\left(A_{0}(\tau,1)\right)_{\vec{l},\vec{k}}\cdot\left(A_{0}(\upsilon,1)\right)_{\vec{k},\vec{m}}.\label{eq:definition-matrix product}
\end{equation}
Moreover, let $p_{\omega_{|k}}:=p_{\omega_{1}}p_{\omega_{2}}\cdot\dots\cdot p_{\omega_{k}}$
and define 
\[
A(\omega,k):=(p_{\omega_{|k}})^{-1}A_{0}(\omega,k)\in\R^{\N_{0}^{s}\times\N_{0}^{s}}.
\]
Also, for 
$a,b\in\Chat$ we define 
\[
U(a,b):=\left(u_{\vec{n}}(a,b)\right)_{\vec{n}\in\N_{0}^{s}}\in\R^{\N_{0}^{s}},\quad\mbox{where\quad}u_{\vec{n}}(a,b):=C_{\vec{n}}(a)-C_{\vec{n}}(b).
\]

Finally, for $\vec{n},\vec{m}\in\N_{0}^{s}$ we write $\vec{n}\le\vec{m}$
if $n_{i}\le m_{i}$ for all $1\le i\le s$. \end{defn}
\begin{rem*}
Note that (\ref{eq:definition-matrix product}) in Definition \ref{def:cocycle matrix}
is well defined, since there exist only finitely many non-zero entries
in each row of the matrix $A_{0}(\tau ,1) \in\R^{\N_{0}\times\N_{0}^{s}}$.
In the following we will frequently make use of the product of matrices
with an infinite index set, which requires explanation. These matrix
products will always be well defined, since either the first factor
of the product possesses at most finitely many non-zero entries in
each row, or the second factor contains at most finitely many non-zero
entries in each column. \end{rem*}

To state the next lemma, we introduce the following matrices. 
\begin{defn} \label{def:Bij} For $i \in \{1, \dots ,s\}$ we introduce the $\N_0^s$-matrix $D_i$ given by 
\[
D_i =\sum_{\vec{n}\in\N_{0}^{s}}n_i \1_{\vec{n},\vec{n}-\vec{e}_i}.
\]
\end{defn}

Next lemma shows that the matrix cocycle $A$ is commutative. 
\begin{lem}
\label{lem:commutativity} Let $k\in \N$ and $i_1, i_2, \dots, i_k \in \{ 1,\dots, s\}$. Put $t_l=\card\{{j\le k\mid i_j=l \}}$, $l=1,\dots, s$ and let $\vec{t}=(t_l)_{l\le s}\in \N_0^s$. Then 
for every $\vec{u}, \vec{v}\in \N_{0}^{s}$, we have 
\begin{equation} \label{eq:Bij}
(D_{i_1}\cdot \dots \cdot D_{i_k})_{\vec{u},\vec{v}}=\begin{cases}
\prod_{i=1}^s u_i \cdot (u_i-1) \dots \cdot (u_i-t_i+1), \mbox{ if } \vec{v}=\vec{u}-\vec{t}. \\
0, \mbox{ else}.
\end{cases}
\end{equation}
Here, we put $u_{i}(u_{i}-1)\cdots (u_{i}-t_{i}+1)=1$ if $t_{i}=0.$
In particular, the matrices $(D_i)_{i=1,\dots, s}$ commute. Moreover, for all $\omega,\tau\in I^{\N}$ we have
\[
A(\omega,1)A(\tau,1)=A(\tau,1)A(\omega,1)\quad\mbox{and}\quad A_{0}(\omega,1)A_{0}(\tau,1)=A_{0}(\tau,1)A_{0}(\omega,1).
\]
\end{lem}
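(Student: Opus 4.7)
The plan is to establish the explicit formula \eqref{eq:Bij} by induction on $k$, and then deduce both the commutativity of the $D_i$ and the commutativity of the matrix cocycles by rewriting $A_{0}(\omega,1)$ as a polynomial expression in $I$ and $D_1,\dots,D_s$ with scalar coefficients depending only on $\omega_1$.

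For the base case $k=1$, Definition~\ref{def:Bij} immediately yields $(D_i)_{\vec{u},\vec{v}}=u_i$ when $\vec{v}=\vec{u}-\vec{e}_i$ and zero otherwise, which agrees with the formula using $\vec{t}=\vec{e}_i$. For the inductive step, suppose the formula holds for $D_{i_2}\cdots D_{i_k}$ with multiplicities $t'_l:=\card\{2\le j\le k:i_j=l\}$, so that $\vec{t}=\vec{t}'+\vec{e}_{i_1}$. Using the definition of the matrix product, the only non-zero contribution in
\[
(D_{i_1}\cdots D_{i_k})_{\vec{u},\vec{v}}=\sum_{\vec{w}\in\N_{0}^{s}}(D_{i_1})_{\vec{u},\vec{w}}(D_{i_2}\cdots D_{i_k})_{\vec{w},\vec{v}}
\]
comes from $\vec{w}=\vec{u}-\vec{e}_{i_1}$, yielding the factor $u_{i_1}$. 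By the inductive hypothesis, $\vec{v}=\vec{w}-\vec{t}'=\vec{u}-\vec{t}$ and the remaining factor is $\prod_l w_l(w_l-1)\cdots(w_l-t'_l+1)$. Substituting $w_{i_1}=u_{i_1}-1$ and $w_l=u_l$ for $l\neq i_1$, and absorbing the extra factor $u_{i_1}$, gives the claimed product $\prod_l u_l(u_l-1)\cdots(u_l-t_l+1)$. This completes the induction.

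The key observation is that the right-hand side of \eqref{eq:Bij} depends only on the multiset $\{i_1,\dots,i_k\}$, not on the ordering. Therefore, $D_{i_1}\cdots D_{i_k}=D_{i_{\pi(1)}}\cdots D_{i_{\pi(k)}}$ for any permutation $\pi$, which is precisely the commutativity of the $D_i$. Next, I rewrite the matrices $A_0(\omega,1)$ from Definition~\ref{def:cocycle matrix} as
\[
A_{0}(\omega,1)=p_{\omega_{1}}I+L_{\omega_{1}},\qquad\text{where }L_{i}=D_{i}\text{ for }i\le s,\quad L_{s+1}=-\sum_{i=1}^{s}D_{i}.
\]
In both cases $L_{\omega_1}$ is a scalar linear combination of $D_1,\dots,D_s$. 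Since any two such expressions are polynomials in the commuting family $\{D_1,\dots,D_s\}$ with scalar coefficients, they commute pairwise. This gives $A_{0}(\omega,1)A_{0}(\tau,1)=A_{0}(\tau,1)A_{0}(\omega,1)$, and dividing by the scalar $p_{\omega_{1}}p_{\tau_{1}}$ yields the analogous identity for $A(\omega,1)$ and $A(\tau,1)$.

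The only mildly delicate point is bookkeeping in the induction: keeping track of the shift $w_{i_1}=u_{i_1}-1$ so that the product of $u_{i_1}$ with the falling factorial $(u_{i_1}-1)\cdots(u_{i_1}-t'_{i_1})$ combines correctly into $(u_{i_1})_{t_{i_1}}$, together with the convention $(u_i)_0=1$ when $t_i=0$. No subtle analytical ingredients are needed; the argument is purely combinatorial and all infinite sums arising in the matrix products are finite row-by-row, as explained in the remark following Definition~\ref{def:cocycle matrix}.
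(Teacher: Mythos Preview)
Your proof is correct and follows essentially the same approach as the paper: establish formula~\eqref{eq:Bij} for products of the $D_i$, observe that the result depends only on the multiset of indices, and then write $A_0(\omega,1)$ as $p_{\omega_1}\id$ plus a linear combination of the commuting $D_i$ to conclude. The only difference is that the paper carries out the computation explicitly just for $k=2$ and leaves the general case to the reader, whereas you supply the full induction.
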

\begin{proof}
We only consider the case when $k=2$. The general case is left to the reader. Let $i,j\in \{1\dots s \}$. 
The following calculation proves (\ref{eq:Bij}). 
See  (\ref{1nm}) for the definition of $\1 _{\vec{n},\vec{m}}$. 
We have 
\begin{eqnarray*}
(D_{i}\cdot D_{j})_{\vec{u},\vec{v}} &=& \left(\left(  \sum_{\vec{n}\in\N_{0}^{s}}n_i \1_{\vec{n},\vec{n}-\vec{e}_i} \right) \left(\sum_{\vec{n}\in\N_{0}^{s}}n_j\1_{\vec{n},\vec{n}-\vec{e}_j} \right)\right)_{\vec{u},\vec{v}} 
= \sum_{\vec{r}\in \N_0^s}\left(  \sum_{\vec{n}\in\N_{0}^{s}}n_i\1_{\vec{n},\vec{n}-\vec{e}_i} \right)_{\vec{u},\vec{r}} \left(  \sum_{\vec{n}\in\N_{0}^{s}}n_j \1_{\vec{n},\vec{n}-\vec{e}_j} \right)_{\vec{r},\vec{v}} \\
&=& \begin{cases}u_i\cdot (\vec{u}-\vec{e}_i)_j, \mbox{ if } \vec{v}=\vec{u}-\vec{e}_i -\vec{e}_j \\
0, \mbox{ else.}
\end{cases} 
\end{eqnarray*}
We see from (\ref{eq:Bij}) that the matrices $(D_i)_i$ commute. By  the definition of  $A_0$ we have $A_0(\omega,1)=p_{\omega_1}\id + D_{\omega_1}$, if $\omega_1\neq s+1$, where $\id=\sum\1_{\vec{n},\vec{n}}$, and $A_0(\omega,1)=p_{s+1} \id - \sum_{i=1}^s  D_i$, if $\omega_1=s+1$. Consequently, the commutativity of 
$A_{0}(\omega,1)$ and $A_{0}(\tau,1)$ follows. Thus, the commutativity of $A(\omega,1)$ and $A(\tau,1)$ follows. The proof is complete.
\end{proof}

The following lemma is easy to show by using the definition of 
$A(\omega ,k)$  and  induction on $k$ (see also the argument 
in the proof of Lemma~\ref{lem:commutativity}).   
\begin{lem}
\label{lem:idnm}
Let $\omega \in I^{\N}$ and  $k\in \N .$ Then 
$A(\omega ,k)_{\vec{n},\vec{n}}=1$ for every $\vec{n}\in \N_{0}^{s}.$ 
Also, $A(\omega ,k)_{\vec{n},\vec{m}}=0$ unless $\vec{m}\le \vec{n}.$ 
\end{lem}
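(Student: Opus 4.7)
The plan is to proceed by induction on $k$, after first unpacking the one-step matrix $A_0(\omega,1)$ explicitly. From Definition~\ref{def:cocycle matrix} we have two cases: if $\omega_1 \ne s+1$ then $A_0(\omega,1) = p_{\omega_1}\id + D_{\omega_1}$, and if $\omega_1 = s+1$ then $A_0(\omega,1) = p_{s+1}\id - \sum_{i=1}^s D_i$. In either case, using the formula $D_i = \sum_{\vec{n}} n_i \mathbbm{1}_{\vec{n},\vec{n}-\vec{e}_i}$, the only potentially nonzero entries $(A_0(\omega,1))_{\vec{n},\vec{m}}$ occur when $\vec{m}=\vec{n}$ (with value $p_{\omega_1}$) or when $\vec{m}=\vec{n}-\vec{e}_i$ for some $i$ (in which case $\vec{m} \le \vec{n}$). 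This establishes both assertions for $k=1$ directly after dividing by $p_{\omega_1}$.

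For the inductive step, suppose that for some $k\ge 1$ and every $\tau \in I^{\N}$, the matrix $A_0(\tau,k)$ has $(A_0(\tau,k))_{\vec{n},\vec{n}} = p_{\tau|_k}$ and $(A_0(\tau,k))_{\vec{n},\vec{m}} = 0$ unless $\vec{m}\le \vec{n}$. Using $A_0(\omega,k+1) = A_0(\omega,1)\,A_0(\sigma\omega,k)$ and the matrix product formula (\ref{eq:definition-matrix product}), for any $\vec{n},\vec{m}\in \N_0^s$ we have
\[
(A_0(\omega,k+1))_{\vec{n},\vec{m}} \;=\; \sum_{\vec{r}\in \N_0^s} (A_0(\omega,1))_{\vec{n},\vec{r}} \,(A_0(\sigma\omega,k))_{\vec{r},\vec{m}}.
\]
A summand is nonzero only when $\vec{r}\le \vec{n}$ (from the $k=1$ analysis) and $\vec{m}\le \vec{r}$ (from the inductive hypothesis applied to $\sigma\omega$), forcing $\vec{m}\le \vec{n}$. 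In particular, when $\vec{n}=\vec{m}$ the only contributing index is $\vec{r}=\vec{n}$, yielding
\[
(A_0(\omega,k+1))_{\vec{n},\vec{n}} \;=\; (A_0(\omega,1))_{\vec{n},\vec{n}} \cdot (A_0(\sigma\omega,k))_{\vec{n},\vec{n}} \;=\; p_{\omega_1}\cdot p_{(\sigma\omega)|_k} \;=\; p_{\omega|_{k+1}}.
\]

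Finally, since $A(\omega,k) = (p_{\omega|_k})^{-1} A_0(\omega,k)$, the diagonal entries become $1$ and the support condition $\vec{m}\le \vec{n}$ is preserved under the scalar multiplication. This completes the induction and proves both claims. There is no substantive obstacle here beyond careful bookkeeping; the proof is essentially a routine verification that the lower-triangular structure (with respect to the partial order $\le$ on $\N_0^s$) of $A_0(\omega,1)$, together with its diagonal values $p_{\omega_1}$, propagates under composition.
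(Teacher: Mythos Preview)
Your proof is correct and follows exactly the approach the paper indicates: the paper's own proof is merely the one-line remark that the lemma ``is easy to show by using the definition of $A(\omega,k)$ and induction on $k$,'' and you have carried out precisely that induction with the correct bookkeeping on the lower-triangular structure of $A_0(\omega,1)$ and its diagonal entries $p_{\omega_1}$.
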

The following lemma is easy to see by assumption (2) of our paper.  
\begin{lem}
\label{lem:e0}
There exists $\epsilon _{0}>0$ such that 
if $z\in f_{i}^{-1}(J(G))$ and $ j\neq i$ then $f_{j}(B(z,\epsilon _{0}))$ is included in a 
connected component of $F(G).$ 
\end{lem}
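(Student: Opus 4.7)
The plan is to derive the statement directly from the separating condition (assumption (2)) together with a standard compactness-plus-continuity argument. The key observation is that $J(G)$ is a compact subset of $\Chat$ and each $f_j$ is continuous, so $f_j^{-1}(J(G))$ is compact for every $j \in I$. By assumption (2), the finitely many compact sets $\{f_j^{-1}(J(G))\}_{j \in I}$ are pairwise disjoint, so there exists $\delta > 0$ with
\[
d\bigl(f_i^{-1}(J(G)),\, f_j^{-1}(J(G))\bigr) \ge \delta \quad\text{whenever } i \neq j.
\]

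First I would set $\epsilon_0 := \delta/2$ (any value strictly smaller than $\delta$ works). Then, for $z \in f_i^{-1}(J(G))$ and any $j \neq i$, the ball $B(z,\epsilon_0)$ is disjoint from $f_j^{-1}(J(G))$; equivalently, $f_j(B(z,\epsilon_0)) \cap J(G) = \emptyset$, i.e. $f_j(B(z,\epsilon_0)) \subset F(G)$.

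To finish, I would invoke the fact that $B(z,\epsilon_0)$ is connected (spherical balls are connected) and $f_j$ is continuous, so $f_j(B(z,\epsilon_0))$ is a connected subset of $F(G)$ and must therefore lie in a single connected component of $F(G)$, namely the one containing $f_j(z)$. There is no real obstacle here: the only point requiring any care is extracting a uniform $\epsilon_0$, and this is immediate from the pairwise disjointness of finitely many compact sets. Hence a short paragraph suffices.
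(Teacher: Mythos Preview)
Your argument is correct and is exactly the standard compactness-plus-continuity reasoning the paper has in mind when it says the lemma ``is easy to see by assumption (2).'' There is nothing to add: the paper gives no further details, and your write-up fills them in precisely as intended.
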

In the following, we fix an element $\epsilon _{0}>0$ given in Lemma~\ref{lem:e0}.
\begin{lem}
\label{lem:matrix-formulas}Let $\omega\in I^{\N}$, $z\in J_{\omega}$
and $k_{0}\in\N$. 
Let $a,b\in \Chat $ and suppose that 
$f_{\omega |_{k}}(a),f_{\omega |_{k}}(b)\in B(f_{\omega |_{k}}(z),\epsilon _{0})$ for 
all $0\leq k\leq k_{0}-1$, where $f_{\omega |_{0}}=id.$ Then 
\[
U(a,b)=A_{0}(\omega,k)U(f_{\omega_{|k}}\left(a\right),f_{\omega_{|k}}\left(b\right)),\quad1\le k\le k_{0}.
\]
That is, for each $\vec{n}\in\N_{0}^{s}$ we have 
\begin{eqnarray*}
u_{\vec{n}}(a,b) & = & \,\,\,\,\,\,\,\,\sum_{\vec{m}\in\N_{0}^{s}}A_{0}(\omega,k)_{\vec{n},\vec{m}}\,\, u_{\vec{m}}(f_{\omega_{|k}}\left(a\right),f_{\omega_{|k}}\left(b\right))\\
 & = & \sum_{\vec{m}\in\N_{0}^{s}:\vec{m}\le\vec{n}}A_{0}(\omega,k)_{\vec{n},\vec{m}}\,\, u_{\vec{m}}(f_{\omega_{|k}}\left(a\right),f_{\omega_{|k}}\left(b\right)).
\end{eqnarray*}
Moreover, if $u_{\vec{0}}(a,b)\neq0$ then
\[
\left(u_{\vec{0}}(a,b)\right)^{-1}U(a,b)=\left(u_{\vec{0}}(f_{\omega_{|k}}\left(a\right),f_{\omega_{|k}}\left(b\right))\right)^{-1}A(\omega,k)U(f_{\omega_{|k}}\left(a\right),f_{\omega_{|k}}\left(b\right)).
\]
\end{lem}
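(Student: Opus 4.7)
The plan is to prove the identity by induction on $k$, with the base case $k=1$ being the substantive step. The key inputs will be the functional equation from Lemma~\ref{lem:cn-functionalequation}, the geometric condition given by Lemma~\ref{lem:e0}, and the fact that every $C_{\vec n}$ is constant on each connected component of $F(G)$ (this follows for $C_0=T_{\vec p}$ from the definition of the probability of tending to $L$, and for higher $\vec n$ by differentiating the analogous identity for $T_{(x_1,\dots,x_s,1-\sum x_i)}$ with respect to the probability parameters at $\vec x=\vec p$; the Fatou set depends only on $(f_i)$, not on the probability vector).

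For the base case, I rewrite Lemma~\ref{lem:cn-functionalequation} in the form
\[
u_{\vec n}(a,b) = \sum_{j=1}^{s+1} p_j\, u_{\vec n}(f_j(a),f_j(b)) + \sum_{i=1}^{s} n_i\bigl(u_{\vec n-\vec e_i}(f_i(a),f_i(b)) - u_{\vec n-\vec e_i}(f_{s+1}(a),f_{s+1}(b))\bigr).
\]
Since $z\in J_\omega\subset f_{\omega_1}^{-1}(J(G))$ (Lemma~\ref{lem:exp}) and $a,b\in B(z,\varepsilon_0)$, Lemma~\ref{lem:e0} shows that for every $j\in I\setminus\{\omega_1\}$ the points $f_j(a),f_j(b)$ lie in the same connected component of $F(G)$. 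On such a component every $C_{\vec m}$ is constant, so all terms with index $j\neq\omega_1$ vanish. Splitting into the two cases $\omega_1\neq s+1$ and $\omega_1=s+1$ reproduces exactly the two branches in the definition of $A_0(\omega,1)$ (Definition~\ref{def:cocycle matrix}), and one obtains $U(a,b)=A_0(\omega,1)U(f_{\omega_1}(a),f_{\omega_1}(b))$ componentwise. The truncation of the sum to indices $\vec m\le\vec n$ is automatic from Lemma~\ref{lem:idnm}.

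For the inductive step, given the result for some $k<k_0$, I apply the base case with $\omega$ replaced by $\sigma^k\omega$, $z$ replaced by $f_{\omega|_k}(z)\in J_{\sigma^k\omega}$, and $(a,b)$ replaced by $(f_{\omega|_k}(a),f_{\omega|_k}(b))$; the required geometric hypothesis $f_{\omega|_k}(a),f_{\omega|_k}(b)\in B(f_{\omega|_k}(z),\varepsilon_0)$ is exactly part of the assumption (valid since $k\le k_0-1$). This yields $U(f_{\omega|_k}(a),f_{\omega|_k}(b))=A_0(\sigma^k\omega,1)U(f_{\omega|_{k+1}}(a),f_{\omega|_{k+1}}(b))$, and composing with the inductive identity and using the definition $A_0(\omega,k+1)=A_0(\omega,k)A_0(\sigma^k\omega,1)$ closes the induction.

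For the final displayed formula, I take the $\vec n=\vec 0$ component of the identity just proved. By Lemma~\ref{lem:idnm} the only surviving term on the right is the diagonal one, and since $A(\omega,k)_{\vec 0,\vec 0}=1$ and $A_0(\omega,k)=p_{\omega|_k}A(\omega,k)$, this gives $u_{\vec 0}(a,b)=p_{\omega|_k}\,u_{\vec 0}(f_{\omega|_k}(a),f_{\omega|_k}(b))$. Dividing the full identity $U(a,b)=p_{\omega|_k}A(\omega,k)U(f_{\omega|_k}(a),f_{\omega|_k}(b))$ by $u_{\vec 0}(a,b)$ and using this scalar relation immediately yields the desired normalised form. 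The only potentially delicate point is the claim that the higher $C_{\vec n}$ are constant on Fatou components; this is the one step I would want to justify carefully, but it falls out of the same argument that shows $C_{\vec n}\in C^\alpha(\Chat)$ via the real-analytic dependence of $T_{\vec p}$ on the probability parameters (Remark~\ref{rem:motivation}), so no new machinery is required.
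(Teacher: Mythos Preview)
Your proof is correct and follows essentially the same approach as the paper: reduce to the case $k=1$ via induction, expand using the functional equation of Lemma~\ref{lem:cn-functionalequation}, and kill all terms with $j\neq\omega_1$ using Lemma~\ref{lem:e0} together with the fact that each $C_{\vec n}$ is locally constant on $F(G)$. The paper handles your ``potentially delicate point'' by simply noting that $C_0$, and hence all its partial derivatives, are locally constant on $F(G)$ (citing \cite[Theorem 3.15 (1)]{s11random}); your justification via real-analytic dependence on the probability parameters is equivalent.
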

\begin{proof}
To prove the first assertion, it suffices to consider $k=1$. Then
general case then follows by induction on $k$. By Lemma \ref{lem:cn-functionalequation}
we have for $\vec{n}\in\N_{0}^{s}$, 
\begin{eqnarray*}
u_{\vec{n}}(a,b) & = & C_{\vec{n}}(a)-C_{\vec{n}}(b)\\
 & = & M(C_{\vec{n}})(a)-
 M(C_{\vec{n}})(b)+\sum_{i=1}^{s}n_{i}\left(C_{\vec{n}-\vec{e_{i}}}(f_{i}(a))-C_{\vec{n}-\vec{e_{i}}}(f_{i}(b))\right)-\\
 &  & \hspace{3cm}\sum_{i=1}^{s}n_{i}\left(C_{\vec{n}-\vec{e_{i}}}(f_{s+1}(a))-C_{\vec{n}-\vec{e_{i}}}(f_{s+1}(b))\right).
\end{eqnarray*}
Now first suppose that $\omega_{1}\neq s+1$. Since $C_{0}$ and hence
all its partial derivatives $C\in\T$ are  locally constant on $F(G)$ (see \cite[Theorem 3.15 (1)]{s11random}), 
by the choice of $\epsilon _{0}$, we have 
\begin{eqnarray*}
u_{\vec{n}}(a,b) & = & p_{\omega_{1}}\left(C_{\vec{n}}(f_{\omega_{1}}(a))-C_{\vec{n}}(f_{\omega_{1}}(b))\right)+n_{\omega_{1}}(C_{\vec{n}-\vec{e}_{\omega_{1}}}(f_{\omega_{1}}(a))-C_{\vec{n}-\vec{e}_{\omega_{1}}}(f_{\omega_{1}}(b)))\\
 & = & \left(A_{0}(\omega,1)U(f_{\omega_{1}}(a),f_{\omega_{1}}(b))\right)_{\vec{n}}.
\end{eqnarray*}
Similarly, if $\omega_{1}=s+1$ then we have 
\begin{align*}
u_{\vec{n}}(a,b) & =p_{\omega_{1}}\left(C_{\vec{n}}(f_{\omega_{1}}(a))-C_{\vec{n}}(f_{\omega_{1}}(b))\right)-\sum_{i=1}^{s}n_{i}\left(C_{\vec{n}-\vec{e_{i}}}(f_{s+1}(a))-C_{\vec{n}-\vec{e_{i}}}(f_{s+1}(b))\right)\\
 & =\left(A_{0}(\omega,1)U(f_{s+1}(a),f_{s+1}(b))\right)_{\vec{n}}. 
\end{align*}
The second assertion follows from the first by using 
$
u_{\vec{0}}(a,b)=p_{\omega_{|k}}u_{\vec{0}}(f_{\omega_{|k}}\left(a\right),f_{\omega_{|k}}\left(b\right)).
$
\end{proof}
We now prove the key lemma in which we estimate 
the polynomial growth order of the components of $A(\omega, k)$ 
as $k\rightarrow \infty .$  
\begin{lem}
\label{lem:matrix-growth}Let $\omega\in I^{\N}$ and $k\in\N$. Put
$m_{i}:=m_{i}(k):=\card\left\{ 1\le j\le k:\omega_{j}=i\right\} $
for $1\le i\le s+1$. 
Let $\vec{m}=(m_{i})_{i=1}^{s}\in \N_{0}^{s}.$ 
Let $\vec{q}$, $\vec{r}\in \N_0^s$ with  $\vec{0}\le\vec{r}\le\vec{q}$. Then we have 
\begin{align*} 
A(\omega,k)_{\vec{q},\vec{r}} & =\sum_{\substack{\vec{q}-\vec{r}-\vec{m}\le \vec{t}\le\vec{q}-\vec{r}\\
0\le |\vec{t}|\le m_{s+1}
}
}\left((-1)^{|\vec{t}|}p_{s+1}^{-|\vec{t}|}{m_{s+1} \choose |\vec{t}|}|\vec{t}|!\prod_{i=1}^{s}{m_{i} \choose q_{i}-r_{i}-t_{i}}\frac{q_{i}!}{(t_{i})!r_{i}!p_{i}^{q_{i}-r_{i}-t_i}}\right)\\
 & =\sum_{\substack{\vec{q}-\vec{r}-\vec{m}\le \vec{t}\le\vec{q}-\vec{r}\\
0\le |\vec{t}|\le m_{s+1}
}
}\left((-1)^{|\vec{t}|}p_{s+1}^{-|\vec{t}|}\frac{m_{s+1}!}{\left(m_{s+1}-|\vec{t}|\right)!}\prod_{i=1}^{s}\frac{m_{i}!q_{i}!}{\left(m_{i}-\left(q_{i}-r_{i}-t_{i}\right)\right)!\left(q_{i}-r_{i}-t_{i}\right)!t_{i}!r_{i}!p_{i}^{q_{i}-r_{i}-t_i}}\right),
\end{align*}
where $\vec{t}=(t_i)_{1\le i \le s}$. 
In particular, there exists a constant $K\ge1$
which depends on $\vec{q}$ and the probability vector $\vec{p}$ 
but not on $k$ 
such that 
\[
\left|A(\omega,k)_{\vec{q},\vec{r}}\right|\le K\left(\prod_{i=1}^{s}\tilde{m}_{i}^{q_{i}-r_{i}}\right)\tilde{m}_{s+1}^{|\vec{q}|-|\vec{r}|}
 \mbox{ and }
\left|A(\omega ,k)_{\vec{q},\vec{r}}\right| \le Kk^{|\vec{q}|},
\]
where  $\tilde{m}_{j}:=\max\{ 1, m_{j}\}$ for $1\leq j\leq s+1.$
If $\omega_{j}\neq s+1$ for all $1\leq j\leq k$ 
and $m_{i}>q_{i}-r_{i}$ for all $1\le i\le s$,
then there exists $K'>0$ depending only on $\vec{q}$  such that
\[
A(\omega,k)_{\vec{q},\vec{r}}\ge K'\prod_{i=1}^{s}m_{i}^{q_{i}-r_{i}}.
\]
\end{lem}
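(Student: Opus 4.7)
The plan is to exploit the commutativity established in Lemma~\ref{lem:commutativity} to rewrite $A_0(\omega,k)$ as a single polynomial in the pairwise commuting operators $D_1,\ldots,D_s$, and then to read off the $(\vec{q},\vec{r})$-entry using the explicit formula for $\prod_iD_i^{s_i}$ provided by that same lemma.

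First, as noted in the proof of Lemma~\ref{lem:commutativity}, $A_0(\tau,1)=p_i\id+D_i$ when $\tau_1=i\in\{1,\ldots,s\}$ and $A_0(\tau,1)=p_{s+1}\id-\sum_{i=1}^sD_i$ when $\tau_1=s+1$. Since these matrices all commute, the $k$-fold product factorises as
$$A_0(\omega,k)=\prod_{i=1}^{s}\bigl(p_i\id+D_i\bigr)^{m_i}\cdot\Bigl(p_{s+1}\id-\sum_{i=1}^{s}D_i\Bigr)^{m_{s+1}}.$$
Applying the binomial theorem to each $(p_i\id+D_i)^{m_i}$ and the multinomial theorem to the last factor, the coefficient of $\prod_iD_i^{s_i}$ in $A_0(\omega,k)$ is a sum over pairs $(\vec{k},\vec{t})\in\N_0^s\times\N_0^s$ with $\vec{k}+\vec{t}=\vec{s}$, $0\le k_i\le m_i$, and $|\vec{t}|\le m_{s+1}$. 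By Lemma~\ref{lem:commutativity}, $(\prod_iD_i^{s_i})_{\vec{q},\vec{r}}$ vanishes unless $\vec{r}=\vec{q}-\vec{s}$, in which case it equals $\prod_iq_i!/r_i!$. Setting $\vec{s}=\vec{q}-\vec{r}$ and $k_i=q_i-r_i-t_i$, the constraint $0\le k_i\le m_i$ becomes $\vec{q}-\vec{r}-\vec{m}\le\vec{t}\le\vec{q}-\vec{r}$; dividing by $p_{\omega_{|k}}=\prod_ip_i^{m_i}$ then produces precisely the stated formula.

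For the upper bounds, each summand admits the elementary estimates $\binom{m_i}{q_i-r_i-t_i}\le\tilde m_i^{\,q_i-r_i-t_i}$ and $\binom{m_{s+1}}{|\vec{t}|}|\vec{t}|!\le\tilde m_{s+1}^{\,|\vec{t}|}$, so since $\tilde m_i,\tilde m_{s+1}\ge 1$ and $|\vec{t}|\le|\vec{q}|-|\vec{r}|$, every summand is bounded by a constant depending only on $\vec{q}$ and $\vec{p}$ times $\prod_i\tilde m_i^{\,q_i-r_i}\,\tilde m_{s+1}^{\,|\vec{q}|-|\vec{r}|}$; the number of summands is controlled by $\vec{q}$ alone, yielding the first inequality. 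The bound $Kk^{|\vec{q}|}$ is then immediate: the total exponent of $m_j$'s in a single summand is $\sum_i(q_i-r_i-t_i)+|\vec{t}|=|\vec{q}|-|\vec{r}|\le|\vec{q}|$, so bounding each $\tilde m_j\le k$ suffices.

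For the lower bound, the hypothesis $\omega_j\ne s+1$ for every $j$ yields $m_{s+1}=0$, so $|\vec{t}|\le 0$ forces $\vec{t}=\vec{0}$ and the sum collapses to the single positive term $\prod_i\binom{m_i}{q_i-r_i}q_i!/(r_i!\,p_i^{q_i-r_i})$. A routine estimate provides $\binom{m_i}{q_i-r_i}\ge K''\,m_i^{\,q_i-r_i}$ for a constant $K''>0$ depending only on $q_i-r_i$ whenever $m_i>q_i-r_i$ (distinguishing the cases $m_i\ge 2(q_i-r_i)$ and $q_i-r_i<m_i<2(q_i-r_i)$ if one wishes to be concrete), completing the proof. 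The main bookkeeping burden, which I expect to be the trickiest point, is carefully matching the constraints on $\vec{t}$ arising from $k_i\in\{0,\ldots,m_i\}$ with the range displayed in the statement and verifying that the two equivalent forms of the summand agree after rewriting $\binom{m_i}{q_i-r_i-t_i}$ and $\binom{m_{s+1}}{|\vec{t}|}|\vec{t}|!$ as factorial quotients.
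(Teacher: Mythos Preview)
Your proof is correct and follows essentially the same approach as the paper: both use Lemma~\ref{lem:commutativity} to factorise the cocycle as a product of commuting binomials in the $D_i$, expand via the binomial and multinomial theorems, and read off the $(\vec q,\vec r)$-entry using the formula for $(\prod_i D_i^{s_i})_{\vec q,\vec r}$. The only cosmetic difference is that the paper divides by $p_{\omega_{|k}}$ first and expands $\prod_i(\id+p_i^{-1}D_i)^{m_i}(\id-p_{s+1}^{-1}\sum_iD_i)^{m_{s+1}}$ directly, whereas you expand $A_0(\omega,k)$ and divide at the end; the subsequent bounds are handled identically.
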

\vspace{-4.6mm} 
\begin{proof}
By Lemma \ref{lem:commutativity} we have 
\begin{equation} \label{eq:Acombi}
A(\omega,k)  =  \prod_{i=1}^{s}\left(\id+ p_i^{-1} D_i \right)^{m_{i}}\left(\id-p_{s+1}^{-1}\sum_{i=1}^{s}D_i\right)^{m_{s+1}}.
\end{equation}
To expand the right-hand side, we use the multinomial coefficient, which is given by ${{n} \choose {t_1, \,\, t_2, \,\, \ldots \,\, , t_s}}=\frac{n!}{\prod_{i=1}^{s}(t_{i})!}$ and which satisfies
$$ (x_1 + x_2 + \cdots +x_s)^n=\sum_{|\vec{t}|\le n} {{n} \choose {t_1,\,\,t_2\,\,\ldots, t_s}} x_1^{t_1}x_2^{t_2}\dots  x_s^{t_s}.$$ 
By (\ref{eq:Acombi}) and Lemma~\ref{lem:commutativity} 
we obtain, for each $\vec{q}$, $\vec{r}$ with  $\vec{0}\le\vec{r}\le\vec{q}$,  
\begin{eqnarray*}
(A(\omega,k))_{\vec{q},\vec{r}} & = &  \sum_{\substack{\vec{q}-\vec{r}-\vec{m}\le \vec{t}\le\vec{q}-\vec{r}\\
0\le |\vec{t}|\le m_{s+1}
}
} \sum_{\substack{ \mathcal{I}_{s+1} \subset \{ 1,\dots, m_{s+1}\} \\
|\mathcal{I}_{s+1}|= |\vec{t}|
}
} {|{\mathcal{I}_{s+1}}| \choose {t_1, \,\, t_2, \,\, \ldots \,\, , t_s}} 
(-1)^{|\vec{t}|}p_{s+1}^{-|\vec{t}|}\prod_{i=1}^{s}{m_{i} \choose q_{i}-r_{i}-t_{i}}\frac{q_i !}{r_i ! p_{i}^{q_{i}-r_{i}-t_i}}.
\end{eqnarray*}
Note  that, to deduce the above formula, when we expand  the term $\left(\id-p_{s+1}^{-1}\sum_{i=1}^{s}D_i\right)^{m_{s+1}}$ on the right hand side of (\ref{eq:Acombi}), 
for any $\vec{t}$ with $\vec{q}-\vec{r}-\vec{m}\leq 
\vec{t}\leq \vec{q}-\vec{r}$, $0\leq |\vec{t}|\leq 
m_{s+1}$,  and 
for any subset $\mathcal{I}_{s+1}\subset \{ 1,\dots,m_{s+1}\}$ with $|\mathcal{I}_{s+1}|=|\vec{t}|$,  we picked the factor $-p_{s+1}^{-1}\sum_{i=1}^{s}D_i$  for any element $j\in \mathcal{I}_{s+1}$,  and  we picked the identity  for any element $j\in \{1,\dots, m_{s+1} \} \setminus \mathcal{I}_{s+1}$.
Finally, a simple calculation finishes the proof of the first assertion of the lemma.

For the upper bound of $\left|A(\omega,k)_{\vec{q},\vec{r}}\right|$
we observe that with some constant \foreignlanguage{english}{$K_{0}$ which depends on 
$\vec{q}$ and the probability vector $\vec{p}$ 
but not on $k$ 
we have} 
\begin{eqnarray*}
p_{s+1}^{-|\vec{t}|}\frac{m_{s+1}!}{\left(m_{s+1}-|\vec{t}|\right)!}\prod_{i=1}^{s}\frac{m_{i}!q_{i}!}{\left(m_{i}-\left(q_{i}-r_{i}-t_{i}\right)\right)!\left(q_{i}-r_{i}-t_{i}\right)!t_{i}!r_{i}!p_{i}^{q_{i}-r_{i}-t_{i}}} & \le & K_{0}\tilde{m}_{s+1}^{|\vec{t}|}\prod_{i=1}^{s}\tilde{m}_{i}^{q_{i}-r_{i}-t_{i}}\\
 & \le & K_{0}\left(\prod_{i=1}^{s}\tilde{m}_{i}^{q_{i}-r_{i}}\right)\tilde{m}_{s+1}^{|\vec{q}|-|\vec{r}|}.
\end{eqnarray*}
Since $\sum_{i=1}^{s+1}m_{i}=k$ 
we see that 
$\tilde{m}_{s+1}^{|\vec{t}|}
\prod _{i=1}^{s}\tilde{m}_{i}^{q_{i}-r_{i}-t_{i}}\leq k^{|\vec{q}|}. $ 

Now suppose that $\omega_{j}\neq s+1$ for all $1\leq j\leq k$  
and $m_{i}>q_{i}-r_{i}$ 
  for $1\le i\le s$. Then we
have 
\[
A(\omega,k)_{\vec{q},\vec{r}}=\prod_{i=1}^{s}\frac{m_{i}!q_{i}!}{\left(m_{i}-\left(q_{i}-r_{i}\right)\right)!\left(q_{i}-r_{i}\right)!r_{i}!p_{i}^{q_{i}-r_{i}}}.
\]
Clearly, 
with some constant $K'>0$ which depends only on 
$\vec{q}$ we have 
that
\begin{eqnarray*}
\frac{m_{i}!q_{i}!}{\left(m_{i}-\left(q_{i}-r_{i}\right)\right)!\left(q_{i}-r_{i}\right)!r_{i}!p_{i}^{q_{i}-r_{i}}} & \ge & K'm_{i}^{q_{i}-r_{i}},
\end{eqnarray*}
which finishes the proof of the lower bound.\end{proof}
\selectlanguage{english}%
\vspace{-4mm} 
\begin{lem}
\label{lem:growth of solutions}Let $x_{0}\in J(G)$ and let $\epsilon>0$.
\foreignlanguage{british}{Let $\vec{n}\in\N_{0}^{s}$ and set $n:=|\vec{n}|$.}
Then there exists a constant $K>0$ such that for every $k\in\N$
there exist points $a_{k}\in (B(x_{0},\epsilon)\cap J(G))\setminus\left\{ x_{0}\right\} $
and $b_{k}\in B(x_{0},\epsilon)\setminus\left\{ x_{0}\right\} $ with
$u_{\vec{0}}(a_{k},b_{k})\neq0$ such that for $0\le\vec{q}\le\vec{n}$,
\[
K^{-1}k^{\sum_{i=1}^{s}q_{i}\left(n+1\right)^{i-1}}\le\frac{u_{\vec{q}}(a_{k},b_{k})}{u_{\vec{0}}(a_{k},b_{k})}\le Kk^{\sum_{i=1}^{s}q_{i}\left(n+1\right)^{i-1}}.
\]
\end{lem}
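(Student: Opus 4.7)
The plan is to realise $(a_k,b_k)$ as inverse branches of a carefully designed composition $f_{\omega(k)|_{N(k)}}$ applied to a fixed reference pair, to invoke Lemma~\ref{lem:matrix-formulas}, and then to extract the desired growth rate from Lemma~\ref{lem:matrix-growth}. Let $\omega^{*}\in I^{\N}$ be the unique symbolic code of $x_0$ (so $x_0\in J_{\omega^{*}}$) provided by Lemma~\ref{lem:exp}, and let $\lambda>1$ and $C>0$ denote the expansion constant and cylinder-diameter constant for $G$ on $J(G)$. Choose $T\in\N$ large enough that the $T$-cylinder of $\omega^{*}$ is contained in $B(x_0,\epsilon/2)$, and pick two points $p,q\in J(G)$ whose codes share a common prefix $\sigma\in I^{L}$ with $L$ so large that $C\lambda^{-(L+1)}<\epsilon_{0}$ and with $C_{0}(p)\neq C_{0}(q)$; such a pair exists because $C_{0}=T_{\vec p}$ is non-constant on $J(G)$ (it takes different values associated with the at least two minimal sets from assumption~(3), cf.~\cite{s11random}). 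Write $p=x(\sigma\alpha)$ and $q=x(\sigma\beta)$ via the coding map $x\colon I^{\N}\to J(G)$ inverting the homeomorphism $\pi\colon J(\tilde f)\to J(G)$.

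For each $k\in\N$ set $m_i(k):=\lceil k^{(n+1)^{i-1}}\rceil$ for $1\le i\le s$ and build $\omega(k)\in I^{\N}$ by concatenating the prefix $\omega^{*}|_{T}$, a block containing $m_i(k)$ copies of each symbol $i\in\{1,\dots,s\}$ in any order (the ordering is immaterial by the commutativity established in Lemma~\ref{lem:commutativity}), the buffer word $\sigma$, and an arbitrary infinite tail. Let $N(k):=T+\sum_{i=1}^{s}m_i(k)+L$ and define $a_k:=x(\omega(k)|_{N(k)}\alpha)$ and $b_k:=x(\omega(k)|_{N(k)}\beta)$. Both points lie in $B(x_0,\epsilon/2)$ by the choice of $T$, and differ from $x_0$ for a generic choice of the tail. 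Taking $z:=a_k\in J_{\omega(k)|_{N(k)}\alpha}$, for every $0\le j\le N(k)-L-1$ the codes of $f_{\omega(k)|_j}(a_k)$ and $f_{\omega(k)|_j}(b_k)$ agree on at least $L+1$ initial symbols, so by expansion they lie within $C\lambda^{-(L+1)}<\epsilon_{0}$ of each other. Hence Lemma~\ref{lem:matrix-formulas} applies with $k_0=N(k)-L$, and, using $u_{\vec 0}(p,q)=C_{0}(p)-C_{0}(q)\neq 0$, gives
\[
\frac{u_{\vec q}(a_k,b_k)}{u_{\vec 0}(a_k,b_k)}=\sum_{\vec r\le\vec q}A(\omega(k),N(k)-L)_{\vec q,\vec r}\,\frac{u_{\vec r}(p,q)}{u_{\vec 0}(p,q)}.
\]

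The digit counts of $\omega(k)|_{N(k)-L}$ equal $m_i(k)$ up to a constant depending only on $T$, while the count of $s+1$'s is bounded, so Lemma~\ref{lem:matrix-growth} furnishes both the upper bound $|A(\omega(k),N(k)-L)_{\vec q,\vec r}|\le K\,k^{\sum_{i}(q_i-r_i)(n+1)^{i-1}}$ and the matching lower bound $A(\omega(k),N(k)-L)_{\vec q,\vec 0}\ge K'\,k^{\sum_{i}q_i(n+1)^{i-1}}$ once $k$ is large enough that $m_i(k)>q_i$ for all $i$. Because decreasing any $r_i$ by one changes the exponent $\sum_{i}(q_i-r_i)(n+1)^{i-1}$ by at least $(n+1)^{0}=1$, the $\vec r=\vec 0$ term strictly dominates the finite sum above; the scaling $m_i(k)\sim k^{(n+1)^{i-1}}$ was chosen precisely so that the polynomial scales for distinct $\vec r\le\vec n$ are well separated. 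Combining these estimates yields the two-sided bound $K^{-1}k^{\sum_i q_i(n+1)^{i-1}}\le u_{\vec q}(a_k,b_k)/u_{\vec 0}(a_k,b_k)\le K k^{\sum_i q_i(n+1)^{i-1}}$ claimed by the lemma.

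The main technical point is ensuring the $\epsilon_{0}$-condition of Lemma~\ref{lem:matrix-formulas} for all $N(k)-L$ applicable steps; the buffer suffix $\sigma$ of length $L$ is present precisely so that at the last applicable index $j=N(k)-L-1$ the codes of $f_{\omega(k)|_j}(a_k)$ and $f_{\omega(k)|_j}(b_k)$ still share $L+1$ initial symbols, yielding a separation at most $C\lambda^{-(L+1)}<\epsilon_{0}$. A secondary issue is the initial selection of $(p,q)\in J(G)^{2}$ with long common symbolic prefix and $C_{0}(p)\neq C_{0}(q)$, which follows from the continuity of $C_{0}$ together with the fact that $C_{0}$ is non-constant on $J(G)$; the construction is robust to the precise choice of $p$ and $q$ because only the non-vanishing of $u_{\vec 0}(p,q)$ enters the lower bound of the ratio.
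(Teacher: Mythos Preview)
Your approach is essentially the same as the paper's: build words whose digit counts are $m_i(k)\sim k^{(n+1)^{i-1}}$ for $i\le s$ and $m_{s+1}=O(1)$, pull a fixed reference pair back through the corresponding inverse branches, apply Lemma~\ref{lem:matrix-formulas}, and read off the growth from Lemma~\ref{lem:matrix-growth}. The only real difference is cosmetic: the paper places the points into $B(x_0,\epsilon)$ by pulling back through a repelling element $g=f_{\tau_r}\circ\cdots\circ f_{\tau_1}$, whereas you use the symbolic prefix $\omega^*|_T$ directly via the coding homeomorphism.

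One point deserves tightening. You invoke the lower bound of Lemma~\ref{lem:matrix-growth} for $A(\omega(k),N(k)-L)_{\vec q,\vec 0}$, but that lower bound is stated only under the hypothesis $\omega_j\neq s+1$ for all $j$, which your prefix $\omega^*|_T$ may violate. The fix is immediate: by the cocycle property (or the commutativity in Lemma~\ref{lem:commutativity}) factor $A(\omega(k),N(k)-L)=A(\omega^*|_T,T)\cdot A(\overline{\xi_k},\sum_i m_i(k))$, where the first factor is a fixed lower-triangular matrix with $1$'s on the diagonal (Lemma~\ref{lem:idnm}) and the second factor involves only symbols $1,\dots,s$. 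Then Lemma~\ref{lem:matrix-growth} gives $\bigl(A(\overline{\xi_k},\sum m_i)U(p,q)\bigr)_{\vec q}\asymp k^{\sum_i q_i(n+1)^{i-1}}$, and multiplication by the fixed triangular factor preserves this leading order. This is exactly how the paper handles its prefix matrix $A(\overline{\tau},r)$, so after this correction your argument and the paper's coincide.
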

\selectlanguage{british}%
\vspace{-5mm} 
\begin{proof}
By the density of the repelling fixed points in $J(G)$ 
(\cite[Theorem 3.1]{MR1397693})  
there exist 
$z_{0}\in B(x_{0},\epsilon)$ and $g\in G$ such that $g(z_{0})=z_{0}$
and \foreignlanguage{english}{$|g'(z_{0})|>1$. Since $\deg(g)\ge2$
we have }$E(g)\subset P(g)\subset P(G)$, where\foreignlanguage{english}{
}$E(g)=E(\langle g\rangle )$ denotes the set of exceptional points of $g$.
Since $G$ is hyperbolic we have $J(G)\subset\Chat\setminus P(G)\subset\Chat\setminus E(g)$.
We may assume that $g(B(z_{0},\epsilon))\supset B(z_{0},\epsilon)$.
Moreover, we have $\bigcup_{n\in\N}g^{n}\left(B(z_{0},\epsilon)\right)=\Chat\setminus E(g)$.
Hence, there exists $n\in\N$ such that $J(G)\subset g^{n}\left(B(z_{0},\epsilon)\right)$.
We may assume that $n=1$ and $J(G)\subset g(B(x_{0},\epsilon))$. 

Since $C_{0}$ is not locally constant on any neighborhood of any point of $J(G)$ (see \cite[Lemma 3.75]{s11random}) and 
since 
$J(G)$ is an uncountable perfect set 
(see \cite[Lemma 3.1]{MR1397693}),  
there exist $a\in J(G)\setminus G(x_{0})$
and $b\in\Chat\setminus G(x_{0})$ close to $a$ such that $C_{0}(a)\neq C_{0}(b)$. 

For each $k\in\N$ and $1\le i\le s$ we set $m_{i}(k):=k^{\left(\left(n+1\right)^{i-1}\right)}$.
Then we define $h_{k}:=f_{1}^{m_{1}(k)}\circ \cdots \circ f_{s}^{m_{s}(k)}$. 
Since $G$ is hyperbolic, we have $P(G)\subset F(G).$ 
For each connected component $U$ of $F(G)$, we take the hyperbolic 
metric on $U.$ 
For each connected component $U$ of $F(G)$ with 
$U\cap P(G)\neq \emptyset $, let 
$B_{h}(P(G)\cap U, 1)$ be the $1$-neighborhood of $P(G)\cap U$ 
in $U$ with respect to the hyperbolic metric on $U.$ 
Let $V=\cup B_{h}(P(G)\cap U, 1)$, where the union is taken 
over all connected components $U$ of $F(G)$ with $U\cap P(G)\neq 
\emptyset .$ Then $G(V)\subset V$, $\overline{V}\subset F(G)$ and $J(G)\subset \Chat 
\setminus \overline{V}.$  
Since $a\in J(G)$, there exist $\eta>0$
and a holomorphic inverse branch $\gamma_{k}:B(a,\eta)\rightarrow\Chat$
such that $h_{k}\circ\gamma_{k}=\id_{B(a,\eta)}$, for each $k\in\N$.
We may assume that $b\in B(a,\eta)$. Put $\tilde{a}_{k}=\gamma_{k}(a)\in J(G)$
and $\tilde{b}_{k}=\gamma_{k}(b)$. 
Since $G(V)\subset V$, we have that 
$(\gamma_{k})_{k\in\N}$
is normal in $B(a,\eta ).$ Thus we may assume that $d(\tilde{a}_{k},\tilde{b}_{k})\le\delta$
for all $k\in\N$, where $\delta >0$ 
is a small number. Since $\tilde{a}_{k}\in J(G)$, there exists $a_{k}\in J(G)\cap B(x_{0},\epsilon)$
with $g(a_{k})=\tilde{a}_{k}$ for all $k\in \N.$ 
We write $g=f_{\tau_{r}}\circ\dots\circ f_{\tau_{1}}$ for some $r\in\N$
and $\tau=(\tau_{1},\dots,\tau_{r})\in I^{r}.$ 
By making $\delta$ sufficiently small, for each $k\in \N$ 
let $\alpha_{k}:\gamma_{k}(B(a,\eta ))\rightarrow \Chat$ 
be the holomorphic map such that 
$g\circ \alpha_{k}=\id_{\gamma_{k}(B(a,\eta) )}$ and 
$\alpha_{k}(\tilde{a}_{k})=a_{k}.$ 
We may assume that 
$\alpha_{k}(\gamma_{k}(B(a,\eta )))\subset B(x_{0},\epsilon )$ 
and $\diam f_{\tau_{j}}\circ \cdots \circ f_{\tau _{1}}
(\alpha_{k}(\gamma_{k}(B(a,\eta ))))<\epsilon_{0}$ for all $j=0,\ldots, r$
 where for  $j=0$ we set $f_{\tau_{j}}\circ \cdots \circ f_{{\tau }_{1}}=\id.$
Let $b_{k}=\alpha_{k}(\tilde{b}_{k})\in B(x_{0},\epsilon )$.
Put $\overline{\tau}:=\left(\tau_{1},\dots,\tau_{r},\tau_{1},\dots,\tau_{r},\dots\right)\in I^{\N}$. 
Since $M(C_{0})=C_{0}$, $C_{0}$ is locally constant on $F(G)$ (\cite[Theorem 3.15 (1)]{s11random}) 
and $C_{0}(a)-C_{0}(b)\neq 0$,  
if $\delta $ is small enough, then Lemma~\ref{lem:e0} implies that 
$u_{\vec{0}}(a_{k},b_{k})=C_{0}(a_{k})-C_{0}(b_{k})\neq 0$ and 
$u_{\vec{0}}(\tilde{a}_{k},\tilde{b}_{k})=C_{0}(\tilde{a}_{k})-C_{0}(\tilde{b}_{k})\neq 0.$ 
 Since $g(a_{k})=\tilde{a}_{k}$ and $g(b_{k})=\tilde{b}_{k}$, Lemma
\ref{lem:matrix-formulas}  and $J(G)=\bigsqcup _{\omega \in I^{\N}}J_{\omega }$
 (Lemma ~\ref{lem:exp}) yield 
\begin{equation}
\label{eq:u0akbk1}
\left(u_{\vec{0}}(a_{k},b_{k})\right)^{-1}U(a_{k},b_{k})=\left(u_{\vec{0}}(\tilde{a}_{k},\tilde{b}_{k})\right)^{-1}A(\overline{\tau},r)U(\tilde{a}_{k},\tilde{b}_{k}).
\end{equation}
Put $\xi_{k}:=(1^{m_{1}(k)},2^{m_{2}(k)},\dots,s^{m_{s}(k)})\in I^{\sum_{i=1}^{s}m_{i}(k)}$
where $u^{m}:=(u,u,\dots,u)\in I^{m}$ for $u\in\left\{ 1,\dots,s+1\right\} $.
Since $h_{k}(\tilde{a}_{k})=a$ and $h_{k}(\tilde{b}_{k})=b$, we
have by Lemmas \ref{lem:matrix-formulas} and \ref{lem:commutativity}, 
\begin{equation}
\label{eq:u0akbk2}
\left(u_{\vec{0}}(\tilde{a}_{k},\tilde{b}_{k})\right)^{-1}U(\tilde{a}_{k},\tilde{b}_{k})=\left(u_{\vec{0}}(a,b)\right)^{-1}A\left(\overline{\xi_{k}},\sum_{i=1}^{s}m_{i}(k)\right)U(a,b).
\end{equation}

By combining the previous two equalities 
(\ref{eq:u0akbk1}) (\ref{eq:u0akbk2}) we have
\[
\left(u_{\vec{0}}(a_{k},b_{k})\right)^{-1}U(a_{k},b_{k})=\left(u_{\vec{0}}(a,b)\right)^{-1}A(\overline{\tau},r)
A\left(\overline{\xi_{k}},\sum_{i=1}^{s}m_{i}(k)\right)U(a,b).
\]

Since $h_{k}\in\left\langle f_{1},\dots,f_{s}\right\rangle $, it
follows from Lemma \ref{lem:matrix-growth} that for $\vec{q}\le\vec{n}$,
\begin{eqnarray*}
\left(A\left(\overline{\xi_{k}},\sum_{i=1}^{s}m_{i}(k)\right)U(a,b)\right)_{\vec{q}} & \asymp & \prod_{i=1}^{s}(m_{i}(k))^{q_{i}}\asymp k^{\sum_{i=1}^{s}q_{i}\left(n+1\right)^{i-1}} 
\mbox{ as }k\rightarrow \infty ,
\end{eqnarray*}
where for any two non-negative functions $\phi _{1}(k)$ and $\phi _{2}(k)$ 
of $k\in \Bbb{N}$, we write  
$\phi _{1}(k)\asymp \phi _{2}(k)$ as $k\rightarrow \infty $ 
if there exists a constant $D>1$ such that 
$D^{-1}\phi _{2}(k)\leq \phi _{1}(k)\leq D\phi _{2}(k)$ for every 
$k\in  \Bbb{N}.$ 
Also by Lemma~\ref{lem:idnm}, we have 
$$\left(A(\overline{\tau },r)A\left(\overline{\xi _{k}},\sum _{i=1}^{s}m_{i}(k)\right)U(a,b)\right)_{\vec{q}}
=\sum _{\vec{r}\leq \vec{q}}
A(\overline{\tau },r)_{\vec{q},\vec{r}}\left(A\left(\overline{\xi _{k}},\sum _{i=1}^{s}m_{i}(k)\right)U(a,b)\right)_{\vec{r}}$$  
and $A(\overline{\tau },r)_{\vec{q},\vec{q}}=1$. 
The proof is complete.\end{proof}
\begin{lem}
\label{detlemma}Let $m\in\N$ and $0\le w_{1}<w_{2}<\dots<w_{m}$
be natural numbers and let $K\ge1$ be a constant. Then there exist
$0\le d_{1}<d_{2}<\dots<d_{m}$ and $\ell_{0}\in\N$ such that for
all $\ell\ge\ell_{0}$ and for all $B=(B_{ij})_{i,j}\in\R^{m\times m}$ satisfying
\[
K^{-1}\ell^{w_{i}d_{j}}\le B{}_{i,j}\le K\ell^{w_{i}d_{j}},\quad\mbox{for all }i,j\le m,
\]
we have 
\vspace{-3mm} 
\[
\det\left(B\right)\ge1.
\]
\end{lem}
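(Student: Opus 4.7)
The plan is to treat $B$ as a perturbation of the generalized Vandermonde-type matrix $\bigl(\ell^{w_i d_j}\bigr)_{i,j}$, and to use the strict rearrangement inequality to isolate a dominant diagonal term in the Leibniz expansion of $\det B$. Concretely, I will choose $d_i := i-1$ for $i=1,\dots,m$ (any strictly increasing sequence of non-negative integers works; this one will do), so that both sequences $(w_i)_{i=1}^m$ and $(d_i)_{i=1}^m$ are strictly increasing non-negative integers.

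The first step is to expand
\[
\det B \;=\; \sum_{\sigma \in S_m} \sgn(\sigma)\,\prod_{i=1}^m B_{i,\sigma(i)},
\]
and observe that, by the hypothesis on $B$, each product satisfies $K^{-m}\,\ell^{S(\sigma)} \le \prod_i B_{i,\sigma(i)} \le K^{m}\,\ell^{S(\sigma)}$, where $S(\sigma) := \sum_{i=1}^m w_i\, d_{\sigma(i)}$. The next step is the key structural observation: because $(w_i)$ and $(d_i)$ are both strictly increasing, the strict rearrangement inequality yields $S(\sigma) < S(\id)$ for every $\sigma \neq \id$. Since all $w_i, d_j$ are non-negative integers, each $S(\sigma)$ is a non-negative integer, so in fact
\[
S(\id) - S(\sigma) \;\ge\; 1 \qquad \text{for every } \sigma \neq \id.
\]

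Putting this together, the identity term is bounded below by $K^{-m} \ell^{S(\id)}$, while the sum of the remaining $(m!-1)$ terms is bounded in absolute value by $(m!-1)\,K^m\,\ell^{S(\id)-1}$. Therefore
\[
\det B \;\ge\; K^{-m}\,\ell^{S(\id)} \;-\; (m!-1)\,K^m\,\ell^{S(\id)-1}
\;=\; \ell^{\,S(\id)-1}\Bigl(K^{-m}\,\ell \;-\; (m!-1)\,K^m\Bigr).
\]
With the choice $d_i = i-1$ and $m\ge 2$, the sum $S(\id) = \sum_{i=1}^m w_i(i-1)$ is at least $w_m(m-1) \ge (m-1)^2 \ge 1$, so the prefactor $\ell^{S(\id)-1}$ is at least $1$; for $\ell \ge \ell_0$ with $\ell_0$ chosen so that $K^{-m}\ell_0 \ge (m!-1)K^m + 1$, the bracket is at least $1$, giving $\det B \ge 1$.

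I do not expect a substantive obstacle: the argument is a rearrangement-inequality computation once the $d_i$ are chosen, and the integer-valuedness of $S(\sigma)$ automatically provides the necessary gap of at least $1$ between the identity exponent and the nearest competitor. The only point that requires mild care is ensuring $S(\id)\ge 1$ so that the final bound survives; this is why one picks $d_i=i-1$ rather than any smaller strictly increasing sequence, and it handles all cases of interest in the application of this lemma (where $m\ge 2$ and the $w_i$ arise as distinct polynomial growth rates).
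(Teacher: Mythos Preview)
Your proof is correct and takes a genuinely different route from the paper's. The paper proceeds by induction on $m$: assuming $d_1<\dots<d_m$ have been chosen, it expands $\det(B)$ for an $(m+1)\times(m+1)$ matrix by Laplace along the last column and then chooses $d_{m+1}$ large enough that the corner term $\ell^{w_{m+1}d_{m+1}}\det(B')$ dominates the remaining cofactors. Your argument is non-inductive: you fix $d_i=i-1$ once and for all, use the full Leibniz expansion, and invoke the strict rearrangement inequality to see that $S(\id)=\sum_i w_i d_i$ strictly exceeds $S(\sigma)$ for every $\sigma\neq\id$; integrality then gives a gap of at least $1$, so the diagonal term dominates all others simultaneously.

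Your approach is more elementary and yields explicit choices of both the $d_i$ and $\ell_0$ (namely $\ell_0\ge (m!-1)K^{2m}+K^m$), whereas the paper's recursive construction leaves the $d_i$ implicit and potentially much larger. The paper's argument, on the other hand, isolates less structure: it only needs each successive $d_{m+1}$ to beat a polynomial bound, not the full rearrangement comparison. One small point: the lemma as stated includes $m=1$, and your remark that ``$m\ge 2$ in the application'' is correct for how the lemma is used in Proposition~\ref{prop:fullrank}, but strictly speaking the case $m=1$ with $w_1=0$ is degenerate for any choice of $d_1$ (since then $\ell^{w_1 d_1}=1$ and $\det B\ge K^{-1}$ need not exceed $1$); this is an issue with the lemma statement itself rather than with your argument.
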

\vspace{-5mm} 
\begin{proof}
The proof is by induction on $m\in\N$. Let $0\le w_{1}<w_{2}<\dots<w_{m}<w_{m+1}$.
By induction hypothesis there exist $0\le d_{1}<d_{2}<\dots<d_{m}$
and $\ell_{0}$ for the sequence $0\le w_{1}<w_{2}<\dots<w_{m}$.
Let $d_{m+1}\in\N$ and let $B=(B_{ij})_{i,j\leq m+1}\in\R^{(m+1)\times(m+1)}$ be a matrix
satisfying $K^{-1}\ell^{w_{i}d_{j}}\le B{}_{i,j}\le K\ell^{w_{i}d_{j}}$,
for each $i,j\le m+1$ and for all $\ell\ge\ell_{0}$. Put $B':=\left(B_{i,j}\right)_{i,j\le m}\in\R^{m\times m}$.
By the Laplace expansion of $\det(B)$ along the $(m+1)$th column,
we see that 
\begin{eqnarray*}
\det(B) & \ge & K^{-1}\ell^{w_{m+1}d_{m+1}}\det\left(B'\right)+O(\ell^{w_{m}d_{m+1}}(\ell^{w_{m+1}d_{m}})^{m}), \mbox{ as } \ell \rightarrow \infty . 
\end{eqnarray*}
Since 
\[
\frac{\ell^{w_{m}d_{m+1}}\ell^{m\cdot w_{m+1}d_{m}}}{\ell^{w_{m+1}d_{m+1}}}=\ell^{w_{m}d_{m+1}-w_{m+1}d_{m+1}+mw_{m+1}d_{m}}=\ell^{d_{m+1}(w_{m}-w_{m+1})+mw_{m+1}d_{m}},
\]
we see that, for $d_{m+1}$ sufficiently large, we have $\ell^{w_{m}d_{m+1}}\left(\ell^{w_{m+1}d_{m}}\right)^{m}\in o\left(\ell^{w_{m+1}d_{m+1}}\right)$
as $l$ tends to infinity. Since by our induction hypothesis, we have
$\det\left(B'\right)\ge1$ for $\ell\ge\ell_{0}$, the lemma follows.\end{proof}
\selectlanguage{english}%
\begin{prop}
\label{prop:fullrank}Let $x_{0}\in J(G)$ and let $\epsilon>0$.
\foreignlanguage{british}{Let $\vec{n}\in\N_{0}^{s}$.} Then there exist
 families $(a_{\vec{r}}')_{\vec{r}\le\vec{n}}$ and $(b_{\vec{r}}')_{\vec{r}\le\vec{n}}$
with $a_{\vec{r}}'\in (B(x_{0},\epsilon)\cap J(G))\setminus\left\{ x_{0}\right\} $,
$b_{\vec{r}}'\in B(x_{0},\epsilon)\setminus\left\{ x_{0}\right\} $
and $u_{\vec{0}}(a_{\vec{r}}',b_{\vec{r}}')\neq0$, for all $\vec{r}\le\vec{n}$,
such that the matrix 
\[
\left(u_{\vec{q}}(a_{\vec{r}}',b_{\vec{r}}')\right)_{\substack{\vec{r}\le\vec{n}\\
\vec{q}\le\vec{n}
}
}
\]
is invertible. \end{prop}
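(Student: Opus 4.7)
The strategy is to build the pairs so that, after a column normalization, the matrix in question fits the Vandermonde-type hypothesis of Lemma~\ref{detlemma}. Set $n := |\vec{n}|$ and define $w_{\vec{q}} := \sum_{i=1}^{s} q_i (n+1)^{i-1}$ for $\vec{q} \le \vec{n}$. Since $0 \le q_i \le n_i \le n$, the assignment $\vec{q} \mapsto w_{\vec{q}}$ is injective on $\{\vec{q} \in \N_0^s : \vec{q} \le \vec{n}\}$ (it is essentially a base-$(n+1)$ expansion), so I may enumerate this set as $\{\vec{q}^{(1)}, \dots, \vec{q}^{(m)}\}$ with $w^{(1)} := w_{\vec{q}^{(1)}} < \dots < w^{(m)} := w_{\vec{q}^{(m)}}$, where $m := \prod_{i=1}^{s}(n_i+1)$. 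Fix any enumeration $\vec{r}^{(1)}, \dots, \vec{r}^{(m)}$ of the same set.

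Next, apply Lemma~\ref{detlemma} to the increasing sequence $w^{(1)} < \dots < w^{(m)}$ and to the constant $K \ge 1$ supplied by Lemma~\ref{lem:growth of solutions} (for the given $x_0$, $\epsilon$, and $\vec{n}$). This produces integers $0 \le d_1 < \dots < d_m$ and $\ell_0 \in \N$ with the property that every matrix $B \in \R^{m \times m}$ satisfying $K^{-1} \ell^{w^{(i)} d_j} \le B_{ij} \le K \ell^{w^{(i)} d_j}$ has $\det B \ge 1$ for all $\ell \ge \ell_0$.

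Now fix one such $\ell \ge \ell_0$ and set $k_j := \ell^{d_j} \in \N$ for $j = 1, \dots, m$. For each $j$, apply Lemma~\ref{lem:growth of solutions} with $k = k_j$ to obtain points $a_{k_j} \in (B(x_0, \epsilon) \cap J(G)) \setminus \{x_0\}$ and $b_{k_j} \in B(x_0, \epsilon) \setminus \{x_0\}$ with $u_{\vec{0}}(a_{k_j}, b_{k_j}) \neq 0$, and put $a'_{\vec{r}^{(j)}} := a_{k_j}$, $b'_{\vec{r}^{(j)}} := b_{k_j}$. Divide the $\vec{r}^{(j)}$-th column of the $m \times m$ matrix $M := (u_{\vec{q}}(a'_{\vec{r}}, b'_{\vec{r}}))_{\vec{q} \le \vec{n},\, \vec{r} \le \vec{n}}$ by the nonzero scalar $u_{\vec{0}}(a_{k_j}, b_{k_j})$. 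The resulting matrix $B$ has $(i,j)$-entry $u_{\vec{q}^{(i)}}(a_{k_j}, b_{k_j}) / u_{\vec{0}}(a_{k_j}, b_{k_j})$, which by Lemma~\ref{lem:growth of solutions} lies in $[K^{-1} k_j^{w^{(i)}}, K k_j^{w^{(i)}}] = [K^{-1} \ell^{w^{(i)} d_j}, K \ell^{w^{(i)} d_j}]$. Hence $\det B \ge 1$ by Lemma~\ref{detlemma}, so $M$ is invertible, proving the proposition.

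The only genuine choice is $k_j = \ell^{d_j}$, which arranges for the normalized matrix to behave like a Vandermonde matrix in the single parameter $\ell$. The combinatorial injectivity of $\vec{q} \mapsto w_{\vec{q}}$ is what lets us rank all multi-indices $\vec{q} \le \vec{n}$ by distinct polynomial growth rates, and no substantive obstacle should arise beyond this routine matching of the outputs of the two preceding lemmas.
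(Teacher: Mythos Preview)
Your proof is correct and follows essentially the same route as the paper: define the base-$(n+1)$ weight $w_{\vec{q}}=\sum_i q_i(n+1)^{i-1}$, invoke Lemma~\ref{lem:growth of solutions} to obtain the growth estimates with constant $K$, feed the ordered weights and $K$ into Lemma~\ref{detlemma} to obtain $d_1<\dots<d_m$ and $\ell_0$, and then choose $a'_{\vec{r}^{(j)}},b'_{\vec{r}^{(j)}}$ as the points from Lemma~\ref{lem:growth of solutions} at level $k_j=\ell^{d_j}$ so that the column-normalized matrix satisfies the hypothesis of Lemma~\ref{detlemma}. The only cosmetic difference is that the paper implicitly uses the same enumeration for $\vec{r}$ as for $\vec{q}$ (via $\iota$), whereas you allow an arbitrary enumeration of the $\vec{r}$'s---this is harmless since a column permutation does not affect invertibility.
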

\begin{proof}
Put\foreignlanguage{british}{ $n:=|\vec{n}|$}. Define $\iota:\left\{ \vec{q}:\vec{q}\le\vec{n}\right\} \rightarrow\N$
given by $\iota\left(\vec{q}\right):=\sum_{i=1}^{s}q_{i}\left(n+1\right)^{i-1}$. 
By Lemma \ref{lem:growth of solutions} there exists a constant $K>0$
such that for every $k\in\N$ there exist points $a_{k}\in (B(x_{0},\epsilon)\cap J(G))\setminus\left\{ x_{0}\right\} $
and $b_{k}\in B(x_{0},\epsilon)\setminus\left\{ x_{0}\right\} $ with
$u_{\vec{0}}(a_{k},b_{k})\neq0$ such that for $0\le\vec{q}\le\vec{n}$,
\[
K^{-1}k^{\iota\left(\vec{q}\right)}\le\frac{u_{\vec{q}}(a_{k},b_{k})}{u_{\vec{0}}(a_{k},b_{k})}\le Kk^{\iota\left(\vec{q}\right)}.
\]
Since $q_{i}\le n$ we have that the numbers $\iota\left(\vec{q}\right)$,
$\vec{q}\le\vec{n}$, are pairwise distinct. We put the elements $\iota\left(\vec{q}\right),\vec{q}\le\vec{n}$
in increasing order and denote them by $w_{1}<w_{2}<\dots<w_{m}$,
where $m:=\card\left\{ \vec{q}:\vec{q}\le\vec{n}\right\} $. Let $d_{1}<\dots<d_{m}\in\N$
and $\ell_{0}\in\N$ be the elements given by Lemma \ref{detlemma}
for the sequence $w_{1}<w_{2}<\dots<w_{m}$ and the constant $K$.
For $\vec{r}\le \vec{n}$ we put $e\left(\vec{r}\right):=d_{\iota\left(\vec{r}\right)}$, $h\left(\vec{r}\right):=\ell_{0}^{e\left(\vec{r}\right)}$
and define 
\[
a_{\vec{r}}':=a_{h\left(\vec{r}\right)}.
\]
Hence, by Lemma \ref{detlemma} we have that $\left(\frac{u_{\vec{q}}(a_{\vec{r}}',b_{\vec{r}}')}{u_{\vec{0}}(a_{\vec{r}}',b_{\vec{r}}')}\right)_{\substack{\vec{r}\le\vec{n}\\
\vec{q}\le\vec{n}
}
}$ is invertible. The proof is complete. 
\end{proof}
\selectlanguage{british}%
\vspace{-5mm} 
\section{Proof of Theorem \ref{main-thm}}
\label{Proof}
In this section, we give the proof of Theorem~\ref{main-thm}. 
\vspace{-4mm} 
\subsection{Lower bound of the pointwise H\"older exponent }
\begin{lem}
\label{lem:lowerbound}Let $C=\sum_{\vec{n}}\beta_{\vec{n}}C_{\vec{n}}\in\T$
be non-trivial. Let $\omega\in I^{\N}$, $z\in J_{\omega}$ and $\vec{n}\in\N_{0}^{s}$.
Then 
\[
\Hol(\sum_{\vec{n}}\beta_{\vec{n}}C_{\vec{n}},z)\ge\liminf_{k\rightarrow \infty }\frac{S_{k}\tilde{\psi}\left(\omega,z\right)}{S_{k}\tilde{\varphi}\left(\omega,z\right)}.
\]
\end{lem}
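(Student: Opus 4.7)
The plan is, for any $\alpha$ strictly less than $\liminf_{k\to\infty}\frac{S_k\tilde\psi(\omega,z)}{S_k\tilde\varphi(\omega,z)}$, to show $\Hol(C,z)\ge\alpha$ by producing an effective pointwise Hölder bound at scale $d(a,z)$. Unwinding the Birkhoff sums gives $S_k\tilde\psi(\omega,z)=\log p_{\omega|_k}$ and $S_k\tilde\varphi(\omega,z)=-\log\|(f_{\omega|_k})'(z)\|$, so the choice of $\alpha$ translates into the key inequality
\[
p_{\omega|_k}\le \|(f_{\omega|_k})'(z)\|^{-\alpha}\qquad\text{for all sufficiently large }k.
\]

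Next I will exploit hyperbolicity to link a point $a$ close to $z$ to a large iterate via inverse branches along the $\omega$-orbit. Since $(\omega,z)\in J(\tilde f)$ and the skew product $\tilde f$ is uniformly expanding on $J(\tilde f)$ (by assumptions (1) and (2), which put $P(G)$ at positive distance from $J(G)$), there exist $\Lambda>1$ and $\rho_0\in(0,\epsilon_0)$ so that on $B(f_{\omega|_k}(z),\rho_0)$ the inverse branch of $f_{\omega|_k}$ sending $f_{\omega|_k}(z)$ to $z$ is well defined, contracts at rate $\Lambda^{-k}$, and has bounded distortion uniformly in $k$ and $\omega$. For $a$ close enough to $z$ I choose $k=k(a)$ so that $d(f_{\omega|_k}(a),f_{\omega|_k}(z))$ lies in $[\rho_0/\Lambda',\rho_0]$; then bounded distortion forces all iterates $f_{\omega|_j}(a)$ with $0\le j\le k$ to lie in $B(f_{\omega|_j}(z),\epsilon_0)$, the relation $d(a,z)\asymp\|(f_{\omega|_k})'(z)\|^{-1}$ holds, and in particular $k(a)\le K_0\log(1/d(a,z))$.

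With this choice the hypotheses of Lemma~\ref{lem:matrix-formulas} are satisfied with $b=z$, so
\[
u_{\vec n}(a,z)=\sum_{\vec m\le\vec n}A_0(\omega,k)_{\vec n,\vec m}\,u_{\vec m}(f_{\omega|_k}(a),f_{\omega|_k}(z)).
\]
Writing $A_0(\omega,k)=p_{\omega|_k}A(\omega,k)$, invoking the polynomial bound $|A(\omega,k)_{\vec n,\vec m}|\le Kk^{|\vec n|}$ of Lemma~\ref{lem:matrix-growth}, and using that each $|u_{\vec m}|$ is uniformly bounded by $2\|C_{\vec m}\|_\infty$, we obtain $|u_{\vec n}(a,z)|\le K_1 p_{\omega|_k}k^{|\vec n|}$. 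Summing over the finite support of $C=\sum\beta_{\vec n}C_{\vec n}$ with $N:=\max\{|\vec n|:\beta_{\vec n}\ne 0\}$ and combining with the key inequality for $p_{\omega|_k}$ and with $d(a,z)\asymp\|(f_{\omega|_k})'(z)\|^{-1}$ gives
\[
|C(a)-C(z)|\le K_2\, p_{\omega|_k}\,k(a)^N\le K_3\,d(a,z)^{\alpha}\bigl(\log(1/d(a,z))\bigr)^N,
\]
which is $o(d(a,z)^{\alpha'})$ as $a\to z$ for every $\alpha'<\alpha$. Hence $\Hol(C,z)\ge\alpha'$, and letting $\alpha'\uparrow\alpha$ and then $\alpha$ to the liminf finishes the proof.

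The main technical step is the inverse-branch/distortion argument that produces $k(a)$ and guarantees the orbit $(f_{\omega|_j}(a))_{j\le k}$ stays in $B(f_{\omega|_j}(z),\epsilon_0)$ as required by Lemma~\ref{lem:matrix-formulas}; this is where assumptions (1)--(2) are used in full force. The other delicate point is absorbing the polynomial factor $k(a)^N$: it only costs a logarithmic factor $\log(1/d(a,z))^N$, hence can be absorbed into any strictly smaller Hölder exponent, so nothing is lost when passing from $\alpha$ to $\alpha'$ and ultimately to the liminf.
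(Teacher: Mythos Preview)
Your proof is correct and follows essentially the same approach as the paper: both use hyperbolicity to build inverse branches with bounded distortion, apply Lemma~\ref{lem:matrix-formulas} to express $C(a)-C(z)$ through the cocycle $A_0(\omega,k)$, invoke the polynomial bound of Lemma~\ref{lem:matrix-growth}, and absorb the resulting $k^N$ factor using the linear growth of $|S_k\tilde\varphi|$. The only cosmetic difference is that the paper works ``scale-first'' (fixing $k$, bounding $\sup_{y\in B_k}|C(y)-C(z)|$, and then quoting the identity $\Hol(C,z)=\liminf_k\frac{\log\sup_{B_k}|C(y)-C(z)|}{S_k\tilde\varphi(\omega,z)}$ from \cite{JS13b}), whereas you work ``point-first'' (fixing $a$, choosing $k=k(a)$, and verifying the H\"older quotient directly); these are dual formulations of the same distortion argument.
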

\begin{proof}
Let $V$ be a neighborhood of $P(G)$ in $F(G)$ as in the proof of 
Lemma~\ref{lem:growth of solutions}. Then $\overline{V}\subset F(G)$ and 
$G(V)\subset V.$  
Let $R>0$ such that $B(J(G),R)\subset \Chat \setminus \overline{V}.$
 Then 
for each $k\in\N$, \foreignlanguage{english}{there
exists a holomorphic branch} $\phi_{k}:B\bigl(f_{\omega_{|k}}\left(z\right),R\bigr)\rightarrow\Chat$
of $f_{\omega_{|k}}^{-1}$ such that $f_{\omega_{|k}}\left(\phi_{k}\left(y\right)\right)=y$
for $y\in B\bigl(f_{\omega_{|k}}\left(z\right),R\bigr)$ and $\phi_{k}\bigl(f_{\omega_{|k}}\left(z\right)\bigr)=z$.
Since $G(V)\subset V$, 
for every $\epsilon>0$ there exists $r_{0}\le R$ such that, for
the sets $B_{k}$, which are for $k\in\N$ given by 
\[
B_{k}:=\phi_{k}\bigl(B\bigl(f_{\omega_{|k}}\left(z\right),r_{0}\bigr)\bigr),
\]
we have that $\diam\bigl(f_{\omega_{|j}}\left(B_{k}\right)\bigr)\le\epsilon$
for all $1\le j\le k$. 
Set $m_{i}(k)=\card\left\{ 1\le j\le k:\omega_{j}=i\right\} $
for $1\le i\le s$. 
Let $\vec{n}_{\max}\in \N_{0}^{s}$ be an element 
such that for every $\vec{n}\in \N_{0}^{s}$ with $\beta _{\vec{n}}\neq 0$, we have 
$\vec{n}\leq \vec{n}_{\max }.$ 
Taking $\epsilon >0$ such that $0<\epsilon <\epsilon _{0}$, 
by Lemma \ref{lem:matrix-formulas} and Lemma \ref{lem:matrix-growth}
there exists $K\ge1$ such that 
\begin{eqnarray*}
\sup_{y\in B_{k}}\left|C(y)-C(z)\right| & = & \sup_{y\in B_{k}}\left|\sum_{\vec{n}\le\vec{n}_{\max}}\beta_{\vec{n}}\left(C_{\vec{n}}(y)-C_{\vec{n}}(z)\right)\right|\\
 & = & p_{\omega_{|k}}\left(\sup_{y\in B_{k}}\left|\sum_{\vec{n}\le\vec{n}_{\max}}\beta_{\vec{n}}\sum_{\vec{j}\in\N_{0}^{s}:\vec{j}\le\vec{n}}A(\omega,k)_{\vec{n},\vec{j}}\cdot u_{\vec{j}}(f_{\omega_{|k}}(y),f_{\omega_{|k}}(z))\right|\right)\\
 & \le & p_{\omega_{|k}}\sum_{\vec{n}\le\vec{n}_{\max}}|\beta_{\vec{n}}|\card\left\{ \vec{j}:\vec{j}\le\vec{n}_{\max}\right\} Kk^{|\vec{n}_{\max}|}\cdot2\max_{\vec{j}\le\vec{n}_{\max}}\Vert C_{\vec{j}}\Vert.
\end{eqnarray*}
We have thus shown that 
\begin{equation}
\log\sup_{y\in B_{k}}\left|C(y)-C(z)\right|\le S_{k}\tilde{\psi}(\omega,z)+\log\left(\sum_{\vec{n}\le\vec{n}_{\max}}|\beta_{\vec{n}}|\card\left\{ \vec{j}:\vec{j}\le\vec{n}_{\max}\right\} Kk^{|\vec{n}_{\max}|}\cdot2\max_{\vec{j}\le\vec{n}_{\max}}\Vert C_{\vec{j}}\Vert\right).\label{eq:upperestimate}
\end{equation}
By \cite[Lemma 5.1]{JS13b} and Koebe's distortion theorem 
(see also the proof of \cite[Lemma 5.2]{JS13b})
we have 
\begin{equation}
\label{eq:holcz}
\Hol(C,z)=\liminf_{r\rightarrow 0}\frac{\log\sup_{y\in B(z,r)}\left|C(y)-C(z)\right|}{\log r}=\liminf_{k\rightarrow \infty }\frac{\log\sup_{y\in B_{k}}\left|C(y)-C(z)\right|}{S_{k}\tilde{\varphi}(\omega,z)}.
\end{equation}
Since $G$ is hyperbolic, \cite[Theorem 2.6]{MR1625124} implies that 
there exist $m\in \N $ and $\theta <0$ such that $S_{m}\tilde{\varphi }<\theta <0.$ 
Combining (\ref{eq:holcz}) with (\ref{eq:upperestimate}) and $S_{k}\tilde{\varphi }<0$ 
for every large $k$,   
 we see that 
\[
\Hol(C,z)\ge\liminf_{k\rightarrow \infty }\frac{S_{k}\tilde{\psi}(\omega,z)}
{S_{k}\tilde{\varphi}(\omega,z)}+\liminf_{k\rightarrow \infty }\frac{\log\left(\sum_{\vec{n}\le\vec{n}_{\max}}|\beta_{\vec{n}}|\card\left\{ \vec{j}:\vec{j}\le\vec{n}_{\max}\right\} Kk^{|\vec{n}_{\max}|}\cdot2\max_{\vec{j}\le\vec{n}_{\max}}\Vert C_{\vec{j}}\Vert\right)}{S_{k}\tilde{\varphi}(\omega,z)}.
\] 
Consequently, we have that 
\[
\lim_{k\rightarrow \infty }\frac{\log\left(\sum_{\vec{n}\le\vec{n}_{\max}}|\beta_{\vec{n}}|\card\left\{ \vec{j}:\vec{j}\le\vec{n}_{\max}\right\} Kk^{|\vec{n}_{\max}|}\cdot2\max_{\vec{j}\le\vec{n}_{\max}}\Vert C_{\vec{j}}\Vert\right)}{S_{k}\tilde{\varphi}(\omega,z)}=0,
\]
which completes the proof of the lemma. 
\end{proof}

\subsection{Upper bound of the pointwise H\"older exponent }

To prove the upper bound of the point H\"older exponent, the following
lemma is crucial.
\begin{lem}
\label{lem:eta-keylemma}Let $\omega\in I^{\N}$ and $x_{0}\in J_{\omega}$.
Let $\vec{n}_{\max}\in\N_{0}^{s}$ and let $\left(\beta_{\vec{n}}\right)_{\vec{n}\le\vec{n}_{\max}}\neq0$. 
Let $(j(k))_{k\in \N}$ be a sequence of positive integers such that 
$j(k)\rightarrow \infty $ as $k\rightarrow \infty .$ 
Then for every $\epsilon>0$ there exist $a,b\in B(x_{0},\epsilon)\setminus\left\{ x_{0}\right\} $
with $a\neq b$ such that\foreignlanguage{english}{\textup{ 
\[
\eta:=\limsup_{k\rightarrow \infty }\left|\sum_{\vec{m}\le\vec{n}_{\max}}\sum_{\vec{n}\le\vec{n}_{\max}}\beta_{\vec{n}}A(\omega,j(k))_{\vec{n},\vec{m}}u_{\vec{m}}(a,b)\right|\in (0,\infty ].
\]
}}
\end{lem}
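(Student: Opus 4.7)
The plan is to reduce the claim to a linear-algebra statement using Proposition~\ref{prop:fullrank} together with the triangular normalization of the matrix cocycle recorded in Lemma~\ref{lem:idnm}. First I introduce the abbreviations
\[
\gamma_{\vec{m}}^{(k)} := \sum_{\vec{n}\le\vec{n}_{\max}} \beta_{\vec{n}}\, A(\omega, j(k))_{\vec{n},\vec{m}}, \qquad S_k(a,b) := \sum_{\vec{m}\le\vec{n}_{\max}} \gamma_{\vec{m}}^{(k)}\, u_{\vec{m}}(a,b),
\]
so that the quantity whose limsup defines $\eta$ in the statement is exactly $|S_k(a,b)|$. The goal becomes to exhibit $a,b\in B(x_{0},\epsilon)\setminus\{x_{0}\}$ with $a\ne b$ and $\limsup_{k\to\infty}|S_k(a,b)|>0$.

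Since $(\beta_{\vec{n}})_{\vec{n}\le\vec{n}_{\max}}$ is nonzero and the index set is finite, I choose $\vec{n}_{*}$ maximal (with respect to the partial order $\le$) in $\{\vec{n}\le\vec{n}_{\max} : \beta_{\vec{n}}\ne 0\}$. By Lemma~\ref{lem:idnm}, $A(\omega,j(k))_{\vec{n},\vec{n}_{*}}=0$ unless $\vec{n}_{*}\le\vec{n}$, and $A(\omega,j(k))_{\vec{n}_{*},\vec{n}_{*}}=1$. Since by maximality $\beta_{\vec{n}}=0$ whenever $\vec{n}_{*}\le\vec{n}\le\vec{n}_{\max}$ and $\vec{n}\ne\vec{n}_{*}$, the defining sum collapses to the key identity
\[
\gamma_{\vec{n}_{*}}^{(k)} = \beta_{\vec{n}_{*}}\ne 0\qquad\text{for every }k\in\N.
\]

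Next I apply Proposition~\ref{prop:fullrank} with the given $x_{0},\epsilon$ and $\vec{n}=\vec{n}_{\max}$ to obtain families $(a'_{\vec{r}})_{\vec{r}\le\vec{n}_{\max}}\subset (B(x_{0},\epsilon)\cap J(G))\setminus\{x_{0}\}$ and $(b'_{\vec{r}})_{\vec{r}\le\vec{n}_{\max}}\subset B(x_{0},\epsilon)\setminus\{x_{0}\}$, with $a'_{\vec{r}}\ne b'_{\vec{r}}$ since $u_{\vec{0}}(a'_{\vec{r}},b'_{\vec{r}})\ne 0$, such that the square matrix $V := (u_{\vec{q}}(a'_{\vec{r}},b'_{\vec{r}}))_{\vec{r},\vec{q}\le\vec{n}_{\max}}$ is invertible. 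By definition of $S_{k}$ the vector identity $\bigl(S_{k}(a'_{\vec{r}},b'_{\vec{r}})\bigr)_{\vec{r}} = V\cdot (\gamma_{\vec{m}}^{(k)})_{\vec{m}}$ holds as an equality of column vectors indexed by the finite set $\{\vec{r}\le\vec{n}_{\max}\}$.

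To conclude I argue by contradiction: suppose $\limsup_{k}|S_{k}(a'_{\vec{r}},b'_{\vec{r}})| = 0$ for every $\vec{r}\le\vec{n}_{\max}$. Then $S_{k}(a'_{\vec{r}},b'_{\vec{r}})\to 0$ for every such $\vec{r}$, and applying the fixed matrix $V^{-1}$ to both sides of the displayed identity forces $\gamma_{\vec{m}}^{(k)}\to 0$ for every $\vec{m}\le\vec{n}_{\max}$, which contradicts $\gamma_{\vec{n}_{*}}^{(k)}=\beta_{\vec{n}_{*}}\ne 0$. Hence some $\vec{r}$ satisfies $\limsup_{k}|S_{k}(a'_{\vec{r}},b'_{\vec{r}})|\in(0,\infty]$, and taking $a:=a'_{\vec{r}}$, $b:=b'_{\vec{r}}$ finishes the proof. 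The main obstacle lies entirely in the preceding construction of the invertible matrix $V$, which was accomplished via Lemmas~\ref{lem:growth of solutions} and~\ref{detlemma} inside Proposition~\ref{prop:fullrank}; once these inputs are in hand, the present lemma is a short linear-algebra consequence of the normalization $A(\omega,j(k))_{\vec{n}_{*},\vec{n}_{*}}=1$.
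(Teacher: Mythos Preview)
Your proof is correct and follows essentially the same approach as the paper's: both arguments hinge on the triangular normalization from Lemma~\ref{lem:idnm} (yielding the constant nonzero component $\gamma_{\vec{n}_*}^{(k)}=\beta_{\vec{n}_*}$) together with the invertible matrix supplied by Proposition~\ref{prop:fullrank}. Your version is slightly more direct, since you apply the fixed inverse $V^{-1}$ to the vector identity and read off $\gamma^{(k)}\to 0$ immediately, whereas the paper normalizes $\lambda(k)$ to the unit sphere, passes to a convergent subsequence, and then invokes Proposition~\ref{prop:fullrank} to force the limit to vanish; the compactness step is unnecessary precisely because of the constant component you isolated.
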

\selectlanguage{english}%
\begin{proof}
First recall that the matrix $\left(A(\omega,k)_{\vec{n},\vec{m}}\right)_{\vec{n}\le\vec{n}_{\max},\vec{m}\le\vec{n}_{\max}}$
is invertible, since it is a triangular matrix with all its
diagonal elements equal to one (see Lemma~\ref{lem:idnm}). Since $\left(\beta_{\vec{n}}\right)_{\vec{n}\le\vec{n}_{\max}}\neq0$
we conclude that, for all $k\in\N$, 
\[
\lambda(k):=\left(\lambda_{\vec{m}}(k)\right)_{\vec{m}\le\vec{n}_{\max}}:=\left(\sum_{\vec{n}\le\vec{n}_{\max}}\beta_{\vec{n}}
A(\omega,j(k))_{\vec{n},\vec{m}}\right)_{\vec{m}\le\vec{n}_{\max}}\neq 0.
\]
Let $\epsilon >0$ and 
now suppose by way of contradiction that $\eta=0$ for all $a,b\in B(x_{0},\epsilon)\setminus 
\{ x_{0}\} $ 
with $a\neq b.$ Then we have for
all $a,b\in B(x_{0},\epsilon)\setminus\left\{ x_{0}\right\} $, 
\begin{equation}
\lim_{k\rightarrow \infty }\sum_{\vec{m}\le\vec{n}_{\max}}\lambda_{\vec{m}}(k)u_{\vec{m}}(a,b)=0.\label{contradiction-eq}
\end{equation}
Since $\lambda(k)\neq0$ we may define $\lambda_{0,\vec{m}}(k):=\lambda_{\vec{m}}(k)/\Vert(\lambda_{\vec{p}}(k))_{\vec{p}\le\vec{n}_{\max}}\Vert$.
Here, for every $\gamma =(\gamma _{\vec{p}})_{\vec{p}\leq \vec{n}_{\max}}$, 
we set $\| \gamma \| =\| (\gamma _{\vec{p}})_{\vec{p}\leq \vec{n}_{\max}}\| =
\sqrt{\sum _{\vec{p}\leq \vec{n}_{\max}}|\gamma _{\vec{p}}|^{2}}.$  
By passing to a subsequence $(j (k_{\ell}))_{\ell \in\N}$ of $(j(k))_{k\in \N }$ we may assume that
$\lambda_{\vec{m}}:=\lim_{\ell \rightarrow \infty }\lambda_{0,\vec{m}}(k_{\ell })\in \C $ exists for
each $\vec{m}\le\vec{n}_{\max}$. Put 
$\lambda:=\left(\lambda_{\vec{m}}\right)_{\vec{m}\le\vec{n}_{\max}}$
and observe that $\Vert\lambda\Vert=1$. 
Let $\vec{r}$ be a maximal element in $\{ \vec{n}:\vec{n}\leq \vec{n}_{\max}, \beta _{\vec{n}}\neq 
0\}$ with respect to $\leq .$ Then by Lemma~\ref{lem:idnm}, 
$$\sum _{\vec{n}\leq \vec{n}_{\max}}\beta _{\vec{n}}\cdot 
A(\omega , j(k_{\ell}))_{\vec{n}.\vec{r}}
=\beta _{\vec{r}}A(\omega , j(k_{\ell}))_{\vec{r},\vec{r}}=\beta _{\vec{r}},  \mbox{ \ for every }\ell \in \N.$$  
Thus, $\| (\lambda _{\vec{p}}(k_{\ell}))_{\vec{p}\leq \vec{n}_{\max}}\| 
\geq |\beta _{\vec{r}}|>0$ for every $\ell \in \N.$  
Hence, it follows from (\ref{contradiction-eq})
that 
\[
\sum_{\vec{m}\le\vec{n}_{\max}}\lambda_{\vec{m}}u_{\vec{m}}(a,b)=0,  
\mbox{\ for all }a,b\in B(x_{0},\epsilon )\setminus \{ x_{0}\}, 
\]
which yields $\lambda=0$ by Proposition \ref{prop:fullrank}. This
is the desired contradiction which completes the proof of the lemma. \end{proof}
\selectlanguage{british}%
\begin{lem}
\label{lem:upperbound}Let $\sum_{\vec{n}}\beta_{\vec{n}}C_{\vec{n}}\in\T$
be non-trivial. Let $\omega\in I^{\N}$, $z\in J_{\omega}$ and $\vec{n}\in\N_{0}^{s}$.
Then 
\[
\Hol(\sum_{\vec{n}}\beta_{\vec{n}}C_{\vec{n}},z)\le
\liminf_{k\rightarrow \infty }\frac{S_{k}\tilde{\psi}\left(\omega,z\right)}{S_{k}\tilde{\varphi}\left(\omega,z\right)}.
\]
 \end{lem}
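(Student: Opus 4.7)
The plan is to construct, along a subsequence, two points $w_{k}\to z$ such that $|C(w_{k})-C(z)|$ is bounded below by a positive constant times $\exp(S_{j(k)}\tilde\psi(\omega,z))$ and $d(w_{k},z)$ is bounded above by a positive constant times $\exp(S_{j(k)}\tilde\varphi(\omega,z))$, where $(j(k))$ realizes the liminf $\gamma:=\liminf_{k}S_{k}\tilde\psi(\omega,z)/S_{k}\tilde\varphi(\omega,z)$. Since $\|f_{\omega|_{j(k)}}'(z)\|=\exp(-S_{j(k)}\tilde\varphi(\omega,z))$, taking the ratio of logarithms will yield $\Hol(C,z)\le\gamma$ via the liminf characterisation of $\Hol$ used in Lemma~\ref{lem:lowerbound}.

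First I set $z_{k}:=f_{\omega|_{j(k)}}(z)\in J(G)$ and, as in the proof of Lemma~\ref{lem:lowerbound}, construct univalent inverse branches $\phi_{k}\colon B(z_{k},r_{0})\to\Chat$ of $f_{\omega|_{j(k)}}$ with $\phi_{k}(z_{k})=z$ which shadow the orbit of $z$, meaning $f_{\omega|_{j}}(\phi_{k}(B(z_{k},r_{0})))\subset B(f_{\omega|_{j}}(z),\epsilon_{0})$ for $0\le j\le j(k)-1$, with $\epsilon_{0}$ as in Lemma~\ref{lem:e0}. Hence Lemma~\ref{lem:matrix-formulas} applies to any pair of points of the form $\phi_{k}(a),\phi_{k}(b)$ with $a,b\in B(z_{k},r_{0})$, yielding
\[
C(\phi_{k}(a))-C(\phi_{k}(b)) = p_{\omega|_{j(k)}}\sum_{\vec q\le\vec n_{\max}}\lambda_{\vec q}(k)\,u_{\vec q}(a,b),
\]
where $\lambda_{\vec m}(k):=\sum_{\vec n\le\vec n_{\max}}\beta_{\vec n}A(\omega,j(k))_{\vec n,\vec m}$ and $u_{\vec q}(a,b)=C_{\vec q}(a)-C_{\vec q}(b)$. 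The candidate points will be $w_{k}\in\{\phi_{k}(a),\phi_{k}(b)\}$ chosen so that $w_{k}$ captures at least half of $|C(\phi_{k}(a))-C(\phi_{k}(b))|$ in the triangle inequality against $C(z)$.

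The choice of $a,b$ is obtained by adapting the argument of Lemma~\ref{lem:eta-keylemma} to a moving base point. By compactness of $\Chat$ I pass to a subsequence so that $z_{k}\to x_{0}\in J(G)$. By Lemma~\ref{lem:idnm}, $\lambda_{\vec r_{\max}}(k)=\beta_{\vec r_{\max}}$ for any maximal element $\vec r_{\max}$ of $\{\vec n:\beta_{\vec n}\ne 0\}$, so $\|\lambda(k)\|\ge|\beta_{\vec r_{\max}}|>0$; a further subsequence gives $\lambda(k)/\|\lambda(k)\|\to\lambda^{*}$ with $\|\lambda^{*}\|=1$. Applying Proposition~\ref{prop:fullrank} at $x_{0}$ with $\epsilon<r_{0}/2$ produces families $(a'_{\vec r})_{\vec r\le\vec n_{\max}}$ and $(b'_{\vec r})_{\vec r\le\vec n_{\max}}$ in $B(x_{0},\epsilon)$ whose matrix $(u_{\vec q}(a'_{\vec r},b'_{\vec r}))_{\vec r,\vec q\le\vec n_{\max}}$ is invertible; invertibility together with $\lambda^{*}\ne 0$ forces some $\vec r^{*}$ with $\eta^{*}:=\sum_{\vec q\le\vec n_{\max}}\lambda^{*}_{\vec q}\,u_{\vec q}(a'_{\vec r^{*}},b'_{\vec r^{*}})\ne 0$. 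Set $a:=a'_{\vec r^{*}}$ and $b:=b'_{\vec r^{*}}$. Since $z_{k}\to x_{0}$, for all large $k$ one has $a,b\in B(z_{k},r_{0})$, and $|\sum_{\vec q}\lambda_{\vec q}(k)u_{\vec q}(a,b)|=\|\lambda(k)\|\,|\sum_{\vec q}(\lambda_{\vec q}(k)/\|\lambda(k)\|)\,u_{\vec q}(a,b)|\ge\tfrac12|\beta_{\vec r_{\max}}||\eta^{*}|>0$.

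Finally, Koebe's distortion theorem applied to the univalent branches $\phi_{k}$, whose images lie in a region uniformly separated from $P(G)$, produces a constant $K>0$ independent of $k$ with $d(\phi_{k}(\xi),z)\le K\exp(S_{j(k)}\tilde\varphi(\omega,z))$ for $\xi\in B(z_{k},r_{0}/2)$. The triangle inequality selects $w_{k}\in\{\phi_{k}(a),\phi_{k}(b)\}$ with $|C(w_{k})-C(z)|\ge\tfrac14|\beta_{\vec r_{\max}}||\eta^{*}|\exp(S_{j(k)}\tilde\psi(\omega,z))$, and $d(w_{k},z)\to 0$ since $S_{j(k)}\tilde\varphi(\omega,z)\to-\infty$ by hyperbolicity. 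Passing to logarithms then gives
\[
\frac{\log|C(w_{k})-C(z)|}{\log d(w_{k},z)}\le\frac{O(1)+S_{j(k)}\tilde\psi(\omega,z)}{O(1)+S_{j(k)}\tilde\varphi(\omega,z)}\xrightarrow[k\to\infty]{}\gamma,
\]
proving $\Hol(C,z)\le\gamma$. The main obstacle is the mismatch between the fixed base point in $J_{\omega}$ required by Lemma~\ref{lem:eta-keylemma} and the $k$-dependent target $z_{k}=f_{\omega|_{j(k)}}(z)$ on which the inverse branches are centered; the resolution is the simultaneous compactness extraction of a limit $x_{0}\in J(G)$ of $z_{k}$ and a limit $\lambda^{*}$ of the normalised cocycle vectors $\lambda(k)/\|\lambda(k)\|$, after which Proposition~\ref{prop:fullrank} can be invoked at the single point $x_{0}$ to produce fixed points $a,b$ that work for all large $k$ in the subsequence.
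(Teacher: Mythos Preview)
Your argument is correct and follows essentially the same route as the paper: choose a subsequence $(j(k))$ realising the liminf, extract a limit point $x_{0}$ of $f_{\omega|_{j(k)}}(z)$, use Proposition~\ref{prop:fullrank} at $x_{0}$ together with a compactness argument on the normalised cocycle vectors to find fixed $a,b$ with $\bigl|\sum_{\vec m}\lambda_{\vec m}(k)u_{\vec m}(a,b)\bigr|$ bounded below, and then pull back by the inverse branches and apply Koebe.

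Two minor differences from the paper's proof are worth noting. First, the paper packages the compactness argument on $\lambda(k)/\|\lambda(k)\|$ into Lemma~\ref{lem:eta-keylemma}; you inline it, which has the advantage of sidestepping the cosmetic issue that Lemma~\ref{lem:eta-keylemma} is stated for $x_{0}\in J_{\omega}$ while here one only knows $x_{0}\in J(G)$ (the proof of that lemma only uses $x_{0}\in J(G)$ anyway). Second, instead of your direct selection of $w_{k}\in\{\phi_{k}(a),\phi_{k}(b)\}$ via the triangle inequality, the paper argues by contradiction: assuming $\Hol(C,z)>\beta>\alpha$ forces both $|C(\phi_{k}(a))-C(z)|/d^{\beta}$ and $|C(\phi_{k}(b))-C(z)|/d^{\beta}$ to tend to zero, contradicting the blow-up of $|C(\phi_{k}(a))-C(\phi_{k}(b))|/d^{\beta}$. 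Your version is slightly more direct; both are equally valid.
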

 \vspace{-5mm} 
\begin{proof}
Let $\alpha =\liminf_{k\rightarrow \infty }
(S_{k}\tilde{\psi} (\omega ,z))/(S_{k}\tilde{\varphi }(\omega ,z)).$ We may assume that there exists a sequence $(j(k))_{k\in\N}$ tending
to infinity such that 
\[
\lim_{k\rightarrow \infty }\frac{S_{j(k)}\tilde{\psi}(\omega,z)}{S_{j(k)}\tilde{\varphi}(\omega,z)}=\alpha.
\]
Moreover, since $f_{\omega |_{j(k)}}(z)\in J_{\sigma^{j(k)}(\omega)}\subset J(G)$
and $J(G)$ is compact, we may assume that $x_{0}:=\lim_{k}f_{\omega |_{j(k)}}(z)$
exists.
Let $V,R$ be as in 
the  proof of Lemma \ref{lem:lowerbound}. 
Then for each $p\in\N$, \foreignlanguage{english}{there
exists a holomorphic branch $\phi_{p}:B\bigl(f_{\omega |_{p}}\left(z\right),R\bigr)\rightarrow\Chat$
of $f_{\omega |_{p}}^{-1}$ such that 
$f_{\omega |_{p}}\left(\phi_{p}\left(y\right)\right)=y$
for $y\in B\bigl(f_{\omega |_{p}}\left(z\right),R\bigr)$ and $\phi_{p}
\bigl(f_{\omega |_{p}}\left(z\right)\bigr)=z$. 
Since $G(V)\subset V$, by taking $R$ so small, we may assume that 
$f_{\omega |_{q}}(\phi _{p}(B(f_{\omega |_{p}}(z),R)))\subset B(f_{\omega |_{q}}(z),\epsilon _{0})$ 
for all $p,q\in \N \cup \{ 0\} $ with $0\leq q\leq p$, where 
$\epsilon _{0}$ is the number given in Lemma~\ref{lem:e0}. 
Let $\epsilon>0$. By Lemma \ref{lem:eta-keylemma} there exist $a,b\in B(x_{0},\epsilon)\setminus\left\{ x_{0}\right\} $
such that 
\[
\eta:=\limsup_{k\rightarrow \infty }\left|\sum_{\vec{m}\le\vec{n}_{\max}}\sum_{\vec{n}\le\vec{n}_{\max}}\beta_{\vec{n}}A(\omega,j(k))_{\vec{n},\vec{m}}u_{\vec{m}}(a,b)\right|\in (0, \infty ].
\]
After passing to a subsequence of $(j(k))_{k\in \N }$ if necessary, 
we may assume that 
\[
\eta=\lim_{k\rightarrow \infty }\left|\sum_{\vec{m}\le\vec{n}_{\max}}\sum_{\vec{n}\le\vec{n}_{\max}}\beta_{\vec{n}}A(\omega,j(k))_{\vec{n},\vec{m}}u_{\vec{m}}(a,b)\right|\in (0, \infty ].
\]
For sufficiently large $k\in\N$ and $\epsilon$ small, we may assume
that $a,b\in B\bigl(f_{\omega |_{j(k)}}\left(z\right),R\bigr)$. We
set $y_{k}:=\phi_{j(k)}(a)$ and $z_{k}:=\phi_{j(k)}(b)$. 
Let $\vec{n}_{\max}\in \N_{0}^{s}$  such that 
if $\vec{n}\in \N_{0}^{s}, \beta _{\vec{n}}\neq 0$ then 
$\vec{n}\leq \vec{n}_{\max}.$ 
By Lemma \ref{lem:matrix-formulas} we have 
\begin{eqnarray*}
C(y_{k})-C(z_{k}) & = & \sum_{\vec{n}\le\vec{n}_{\max}}\beta_{\vec{n}}\left(C_{\vec{n}}(y_{k})-C_{\vec{n}}(z_{k})\right)\\
 & = & p_{\omega |_{j(k)}}\sum_{\vec{n}\le\vec{n}_{\max}}\beta_{\vec{n}}\sum_{\vec{m}\in\N_{0}^{s}:\vec{m}\le\vec{n}}A(\omega,j(k))_{\vec{n},\vec{m}}\cdot u_{\vec{m}}(a,b)\\
 & = & p_{\omega |_{j(k)}}\sum_{\vec{m}\le\vec{n}_{\max}}\left(\sum_{\vec{n}\le\vec{n}_{\max}}\beta_{\vec{n}}
 A(\omega,j(k))_{\vec{n},\vec{m}}\right)u_{\vec{m}}(a,b).
\end{eqnarray*}
Let $\eta _{0}\in (0,\eta ).$ Since $S_{j(k)}\tilde{\varphi }<0$ for all large $k$ 
(see the proof 
of Lemma~\ref{lem:lowerbound}),  
it follows that
\[
\liminf_{k\rightarrow \infty }\frac{\log\left|C(y_{k})-C(z_{k})\right|}{S_{j(k)}\tilde{\varphi}\left(\omega,z\right)}\le\liminf_{k\rightarrow \infty }\frac{S_{j(k)}\tilde{\psi}\left(\omega,z\right)+\log\eta_{0}}{S_{j(k)}\tilde{\varphi}\left(\omega,z\right)}=\alpha.
\]
By Koebe's distortion theorem 
we have 
\begin{equation}
\liminf_{k\rightarrow \infty }\frac{\log\left|C(y_{k})-C(z_{k})\right|}{\log\left(d(y_{k},z_{k})\right)}\le\alpha.\label{eq:exponent-is-alpha-2}
\end{equation}
Finally, we show that $\Hol(C,z)\le\alpha$. To prove this, we show
that $\Hol(C,z)\le\beta$ for every $\beta>\alpha$. Suppose by way
of contradiction that $\Hol(C,z)>\beta$. By the triangle inequality
we have
\begin{equation}
\left|C(y_{k})-C(z)\right|\ge\left|\left|C(y_{k})-C(z_{k})\right|-\left|C(z_{k})-C(z)\right|\right|.\label{eq:triangle}
\end{equation}
By Koebe's distortion theorem we have that $d(y_{k},z)\asymp d(y_{k},z_{k})\asymp d(z_{k},z)$
as $k$ tends to infinity. Consequently, by combining with (\ref{eq:triangle}),
we see that there exists a constant $K>1$ such that 
\begin{equation}
\frac{\left|C(y_{k})-C(z)\right|}{d(y_{k},z)^{\beta}}\ge K^{-1}\frac{\left|C(y_{k})-C(z_{k})\right|}{d(y_{k},z_{k})^{\beta}}-K\frac{\left|C(z_{k})-C(z)\right|}{d(z_{k},z)^{\beta}}.\label{eq:hoelder-triangle}
\end{equation}
Our assumption $\Hol(C,z)>\beta$ implies that 
\[
\lim_{k\rightarrow \infty }\left|C(z_{k})-C(z)\right|\big/d(z_{k},z)^{\beta}=0\quad\mbox{and}\quad
\lim_{k\rightarrow \infty }\left|C(y_{k})-C(z)\right|\big/d(y_{k},z)^{\beta}=0.
\]
Moreover, by (\ref{eq:exponent-is-alpha-2}) and our assumption that
$\beta>\alpha$ we have $\limsup_{k}\left|C(y_{k})-C(z_{k})\right|\big/d(y_{k},z_{k})^{\beta}=\infty$.
Now (\ref{eq:hoelder-triangle}) gives the desired contradiction and
 finishes the proof of the lemma. }
\end{proof}
We conclude that Theorem~\ref{main-thm} follows from combining Lemmas~\ref{lem:lowerbound} 
and \ref{lem:upperbound}. 
\section{Proofs of Theorems  \ref{thm:multifractal} and \ref{thm:c0holder}}
\label{section:pfmulti}
In this section, we give the proofs of Theorems~\ref{thm:multifractal} and \ref{thm:c0holder}.   
The proof of Theorem \ref{thm:multifractal} will follow from the detailed version  Theorem \ref{thm:mf-for-hoelderexponent} stated below. For $C\in \T$ and $z\in\Chat$ we define
\[
\M_{*}\left(C,z\right):=\liminf_{r\rightarrow0}\frac{\log\M\left(C,z,r\right)}{\log r},
\]
where $\M\left(C,z,r\right)$ is for $r>0$ given by 
\[
\M\left(C,z,r\right):=\sup_{y\in B\left(z,r\right)}\left|C\left(y\right)-C\left(z\right)\right|.
\]
Moreover, we define for each $\alpha\in\R$ the corresponding level
sets
\[
R_{*}\left(C,\alpha\right):=\left\{ y\in\Chat:\M_{*}\left(C,y\right)=\alpha\right\}. 
\]
Also, we define the dynamically defined level sets $\mathcal{F}\left(\alpha\right)$,
 which are for $\alpha\in\R$ given by
\[
\mathcal{F}\left(\alpha\right):=\pi\left(\tilde{\mathcal{F}}\left(\alpha\right)\right),\mbox{ where }\tilde{\mathcal{F}}\left(\alpha\right):=\biggl\{ (\omega ,x)\in J\left(\tilde{f}\right):\lim_{n\rightarrow\infty}\frac{S_{n}\tilde{\psi}\left(\omega, x\right)}{S_{n}\tilde{\varphi}\left(\omega, x\right)}=\alpha\biggr\}.
\]
Moreover, for $\alpha \in \R $ we set 
$${\mathcal F}'(\alpha ):=\pi (\tilde{{\mathcal F}}'(\alpha )), 
\mbox{ where } 
\tilde{\mathcal{F}}'\left(\alpha\right):=\biggl\{ (\omega ,x)\in J\left(\tilde{f}\right):\liminf_{n\rightarrow\infty}\frac{S_{n}\tilde{\psi}\left(\omega, x\right)}{S_{n}\tilde{\varphi}\left(\omega, x\right)}=\alpha\biggr\}.
$$
The free energy function  is defined by the
unique function $t:\R\rightarrow\R$ such that $\mathcal{P}\bigl(\beta\tilde{\psi}+t\left(\beta\right)\tilde{\varphi},\tilde{f}\bigr)=0$
for each $\beta\in\R$, where $\mathcal{P}\left(\cdot,\tilde{f}\right)$
denotes the topological pressure with respect to the dynamical system $(J(\tilde{f}),\tilde{f})$ (cf.   \cite{MR648108}).  The number $t\left(0\right)$ is also referred to as the critical
exponent $\delta$ of the rational semigroup $G=\langle f_1,\dots ,f_{s+1} \rangle$
(see \cite{MR2153926}).  
Note that under assumptions (1)(2) of our paper, we have 
$$\delta =\dim _{H}(J(G))$$ (\cite{MR1625124, MR2153926}). 
The convex conjugate of $t$ (\cite[Section 12]{rockafellar-convexanalysisMR0274683})
is given by 
\[
t^{*}:\R\rightarrow\R\cup\left\{ \infty\right\} ,\quad t^{*}\left(c\right):=\sup_{\beta\in\R}\left\{ \beta c-t\left(\beta\right)\right\} ,\quad c\in\R.
\]
\vspace{-2mm} 
We now present the theorem. 
\begin{thm}
\label{thm:mf-for-hoelderexponent} Every  non-trivial  $C\in \T$ satisfies all of the following. 
\vspace{-1mm} 
\begin{enumerate}
\item \textup{\emph{We have $\alpha_{+}=\sup\left\{ \alpha\in\R:R_{*}\left(C,\alpha\right)\neq\emptyset\right\} $
and $\alpha_{-}=\inf\left\{ \alpha\in\R:R_{*}\left(C,\alpha\right)\neq\emptyset\right\} $.
 }}
Moreover, we have $\alpha _{-}=\inf \{ \alpha \in \R : {\mathcal F}(\alpha )\neq \emptyset \} 
$ and $\alpha _{+}=\sup \{ \alpha \in \R : {\mathcal F}(\alpha )\neq \emptyset \} .$ 
\selectlanguage{british}%
\item Let  $\alpha_{0}:=-t'(0)$.
If $\alpha_{-}<\alpha_{+}$ then for each $\alpha\in\left(\alpha_{-},\alpha_{+}\right)$, 
we have ${\mathcal F}(\alpha )\subset {\mathcal F}'(\alpha )=H(C, \alpha)$ and  
\begin{align*}
\dim_{H}\left(\mathcal{F}\left(\alpha\right)\right) & =\dim_{H}\left(R_{*}\left(C,\alpha\right)\right)=\dim_{H}\left(H\left(C,\alpha\right)\right)=-t^{*}\left(-\alpha\right)>0.
\end{align*}
Moreover, $v(\alpha ):=-t^{*}\left(-\alpha \right)$ is a real analytic
strictly concave positive function on $\left(\alpha_{-},\alpha_{+}\right)$
with maximum value $\dim _{H}(J(G))=\delta=-t^{*}\left(-\alpha_{0}\right)>0$.  
Also, $v''<0$ on $(\alpha _{-}, \alpha _{+}).$ 
\item 

\begin{enumerate}
\item We have $\alpha_{-}=\alpha_{+}$ if and only if there exist an automorphism
$\varphi\in\Aut\bigl(\Chat\bigr)$ and $a_1,\dots ,a_{s+1}\in\C$
such that 
\vspace{-3mm} 
\[
\varphi\circ f_{i}\circ\varphi^{-1}\left(z\right)=
a_{i}z^{\pm\deg\left(f_{i}\right)}\quad
\mbox{and}\quad\log\deg\left(f_{i}\right)=-\left(\gamma/\delta\right)\log p_{i},
\]
where   $\gamma$ denotes  the unique number such that $\mathcal{P}\left(\gamma\tilde{\psi},\tilde{f}\right)=0$. 

\item If $\alpha_{-}=\alpha_{+}$ then we have 
\vspace{-1mm} 
\[
\mathcal{F}\left(\alpha_{0}\right) =R_{*}\left(C,\alpha_{0}\right)=H\left(C,\alpha_{0}\right)=J(G),
\]
\vspace{-3mm} 
 and for all $\alpha\in \R $ with $\alpha \neq\alpha_{0}$  we have 
 \vspace{1mm} 
\[
\mathcal{F}\left(\alpha \right)=R_{*}\left(C,\alpha\right)=H\left(C,\alpha\right)=\emptyset.
\]

\end{enumerate}
\end{enumerate}
\end{thm}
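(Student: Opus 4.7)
The plan is to transfer the multifractal formalism for Birkhoff quotients on the skew product $(J(\tilde f),\tilde f)$ established in \cite[Theorem~1.2]{JS13b} to the pointwise H\"older exponents of every non-trivial $C\in\T$ via Theorem~\ref{main-thm}. The first step is to identify the three families of level sets appearing in the theorem. A direct unwinding of definitions gives $\M_*(C,z)=\Hol(C,z)$ at every point---if $|C(y)-C(z)|\le Kd(y,z)^{\alpha}$ near $z$, then $\M(C,z,r)\le Kr^{\alpha}$ for small $r$, so $\liminf_{r\to 0}\log\M(C,z,r)/\log r\ge\alpha$, and the reverse implication is equally elementary---hence $R_*(C,\alpha)=H(C,\alpha)$. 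Combined with Theorem~\ref{main-thm} and the local constancy of every $C\in\T$ on $F(G)$ (so $\Hol(C,z)=\infty$ for $z\in F(G)$), this yields
\[
H(C,\alpha)=R_*(C,\alpha)=\mathcal F'(\alpha)\qquad\text{for every finite }\alpha,
\]
and the inclusion $\mathcal F(\alpha)\subset\mathcal F'(\alpha)=H(C,\alpha)$ is then automatic.

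\textbf{Parts~(1) and (2).} The equalities for $\alpha_{\pm}$ in terms of $R_*$ in part~(1) are immediate from the preliminary identification. For the dynamical version $\alpha_{\pm}=\sup/\inf\{\alpha:\mathcal F(\alpha)\neq\emptyset\}$, one direction comes from $\mathcal F(\alpha)\subset H(C,\alpha)$; for the other, I would apply Birkhoff's ergodic theorem to the $\tilde f$-invariant equilibrium states $\nu_{\beta}$ for $\beta\tilde\psi+t(\beta)\tilde\varphi$ and observe that as $\beta$ varies over $\R$ the ratio $\int\tilde\psi\,d\nu_{\beta}/\int\tilde\varphi\,d\nu_{\beta}=-t'(\beta)$ sweeps the open spectrum $(\alpha_-,\alpha_+)$ by convexity of $t$ (the variational scheme underlying \cite[Theorem~1.2]{JS13b}), producing elements of $\mathcal F(\alpha)$ for every such $\alpha$. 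For part~(2), the lower bound $\dim_H H(C,\alpha)\ge-t^*(-\alpha)$ follows from $\mathcal F(\alpha)\subset H(C,\alpha)$ combined with the dimension formula for $\mathcal F(\alpha)$ in \cite[Theorem~1.2]{JS13b}, while the real analyticity and strict concavity of $v(\alpha)=-t^*(-\alpha)$ on $(\alpha_-,\alpha_+)$, together with $v''<0$, positivity, and the maximum value $\delta=\dim_H J(G)$ attained at $\alpha_0=-t'(0)$, are read off directly from the pressure-function conclusions of \cite[Theorem~1.2]{JS13b}. The delicate matching upper bound requires controlling the $\liminf$-level set $\mathcal F'(\alpha)=H(C,\alpha)$: my plan is to cover $\mathcal F'(\alpha)$ by the sets of points whose quotient $S_k\tilde\psi/S_k\tilde\varphi$ lies in $(\alpha-\epsilon,\alpha+\epsilon)$ for some $k\ge N$, apply to each piece the Gibbs measure for $\beta\tilde\psi+t(\beta)\tilde\varphi$ with $\beta$ dual to $\alpha$, estimate the resulting covering count by $-t^*(-\alpha)+O(\epsilon)$ through a standard Borel--Cantelli argument, and let $\epsilon\to 0$.

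\textbf{Part~(3).} The equality $\alpha_-=\alpha_+$ is equivalent to the free-energy function $t$ being affine, and by Liv\v{s}ic cohomology on the expanding system $(J(\tilde f),\tilde f)$---expansion being guaranteed by assumptions~(1)(2) and Lemma~\ref{lem:exp}---this forces $\tilde\psi-\alpha_0\tilde\varphi=u\circ\tilde f-u$ for some H\"older $u$. Evaluating along every periodic orbit of $\tilde f$ converts the cohomology into the quantitative identity $p_{\omega|_n}=\|f_{\omega|_n}'(z_0)\|^{-\alpha_0}$ at every periodic word, and Zdunik-type rigidity for conformal expanding systems (in the spirit of \cite{MR2153926}) then forces each $f_i$, after a single common M\"obius conjugation, to take the monomial form $a_iz^{\pm\deg f_i}$; the normalisations $\mathcal P(\gamma\tilde\psi,\tilde f)=0$ and $\mathcal P(-\delta\tilde\varphi,\tilde f)=0$ identify $\alpha_0=\gamma/\delta$ and give $\log\deg f_i=-(\gamma/\delta)\log p_i$. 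The converse direction is a direct calculation on explicit monomials. Part~(3)(b) then follows from Theorem~\ref{main-thm}, which forces $\Hol(C,z)=\alpha_0$ on $J(G)$, together with the fact that $\Hol(C,z)=\infty$ on $F(G)$.

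\textbf{Main obstacles.} The chief technical difficulty is the upper bound for $\dim_H\mathcal F'(\alpha)$ in part~(2): because the $\liminf$ in Theorem~\ref{main-thm} places $H(C,\alpha)$ a priori outside the regular level set $\mathcal F(\alpha)$, the argument must verify that the irregular part---points at which the Birkhoff quotient oscillates yet has $\liminf$ equal to $\alpha$---does not inflate the Hausdorff dimension. The next most delicate step is the Zdunik-style rigidity in part~(3), which requires combining Liv\v{s}ic cohomology on the skew product with the classification of conformal expanding rational dynamics on $\Chat$.
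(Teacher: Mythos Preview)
Your proposal is correct and follows essentially the same route as the paper: use Theorem~\ref{main-thm} to identify $\Hol(C,z)$ with the dynamical Birkhoff quotient (hence $H(C,\alpha)=R_*(C,\alpha)=\mathcal F'(\alpha)$ via \cite[Lemma~5.1]{JS13b}), and then invoke the multifractal formalism from \cite{JS13b}. The paper's own proof is a one-line citation of \cite[Theorem~5.3, Proposition~3.9, Lemma~5.1]{JS13b}, treating the upper bound for the $\liminf$-level sets and the degeneracy characterisation as black boxes already established there; you instead unpack those boxes, sketching the Gibbs-measure covering argument for $\dim_H\mathcal F'(\alpha)\le -t^*(-\alpha)$ and the Liv\v{s}ic--Zdunik rigidity for part~(3). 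The two ``main obstacles'' you flag are precisely the substantive content of the cited results in \cite{JS13b}, so your elaboration is accurate but, strictly speaking, already available in the literature and not needed for the present proof.
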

\vspace{-7mm} 
\begin{proof}
The  assertions follow by combining Theorem \ref{main-thm} with \cite[Theorem 5.3, Proposition 3.9, Lemma 5.1]{JS13b}, which  completes the proof of the theorem.
\end{proof}
\vspace{-2mm} 
We now give the proof of Theorem~\ref{thm:c0holder}.
\vspace{-3mm} 
\begin{proof}
By \cite{MR1625124} and \cite[Remark 5]{MR2153926},  assumption (1) of this paper 
is equivalent to 
the property that the associated skew product map 
$\tilde{f}$ 
is expanding in the sense of \cite{MR2153926} and  \cite{JS13b}.   
By \cite{MR2153926} or \cite{JS13b} again, we have that 
$\left(J\left(\tilde{f}\right),\tilde{f}\right)$ is a topological transitive expanding
dynamical system with compact state space.  
Thus, there exists a surjective $\Hol$der continuous morphism from an irreducible Markov
shift over a finite alphabet. The Markov shift $(\Sigma_{A},\sigma)$ is given
by the shift space 
\[
\Sigma_{A}:=\left\{ \omega=\left(\omega_{i}\right)_{i\in\N}\in V^{\N}\mid A(\omega_{i},\omega_{i+1})=1\mbox{ for all }i\in \N \right\} ,
\]
and the left shift $\sigma:\Sigma_{A}\rightarrow\Sigma_{A}$, where
$V$ is a finite alphabet set and $A\in\left\{ 0,1\right\} ^{V\times V}$
is an irreducible incidence matrix. We endow $\Sigma_{A}$ with the
metric 
\[
d_{\Sigma}(\omega,\tau):=2^{-\sup\left\{ n\ge0\mid\omega_{1}=\tau_{1},\dots,\omega_{n}=\tau_{n}\right\} }.
\]
It is known (see e.g. \cite[proof of Proposition 3.10]{JS13b}) that there exists a surjective H\"older continuous
map $\pi_{\Sigma }:\Sigma_{A}\rightarrow J\left(\tilde{f}\right)$ 
 such that $\tilde{f}\circ\pi_{\Sigma }=\pi_{\Sigma}\circ\sigma$. We
define the potentials $\psi:\Sigma_{A}\rightarrow\R$, $\varphi:\Sigma_{A}\rightarrow\R$
given by 
\[
\psi:=\tilde{\psi}\circ\pi_{\Sigma},\quad\varphi:=\tilde{\varphi}\circ\pi_{\Sigma}.
\]
Note that $\psi$ and $\varphi$ are H\"older continuous. For a function
$g:\Sigma_{A}\rightarrow\R$ we write $S_{n}g:=\sum_{j=0}^{n-1}g\circ\sigma^{j}$.
If $g:\Sigma_{A}\rightarrow\R$ is H\"older continuous, then there
exists a constant $K_{g}\ge1$ such that, for every $k\in\N$, $\omega_{1}\dots\omega_{k}u\in\Sigma_{A}$ and $\omega_{1}\dots\omega_{k}v\in\Sigma_{A}$ we have the bounded distortion property
\[
\left|S_{k}g(\omega_{1}\dots\omega_{k}u)-S_{k}g(\omega_{1}\dots\omega_{k}v)\right|\le K_{g}.
\]
As in the proof of Lemma~\ref{lem:lowerbound}, 
there exist $m\in \N $ and $\theta <0$ such that 
$S_{m}\varphi \leq \theta <0.$ 
Since $A$ is irreducible, there exists $l_{0}\in\N$ such that for
each $k\in\N$ and $\omega\in\Sigma_{A}^{km}$ there exists $m\leq l\le l_{0}$
and $\tau\in\Sigma_{A}^{l}$ such that $\overline{\omega\tau}:=\omega\tau\omega\tau\dots\in\Sigma_{A}$.
Here, $\Sigma _{A}^{n}:=\{ \omega =(\omega _{i})_{i=1}^{n}\in V^{n}: A(\omega _{i},\omega _{i+1})=1, 
i=1,\ldots, n-1\} .$  

By the definition of $\alpha_{-}$ and $\alpha_{+}$ and Theorem \ref{main-thm}
we have 
\begin{equation}
\frac{S_{km+l}\psi(\overline{\omega\tau})}{S_{km+l}\varphi(\overline{\omega\tau})}=\lim_{n\rightarrow\infty}\frac{S_{n}\psi(\overline{\omega\tau})}{S_{n}\varphi(\overline{\omega\tau})}=\lim_{n\rightarrow\infty}\frac{S_n\tilde{\varphi}(\pi_{\Sigma}(\overline{\omega\tau}))}{S_{n}\tilde{\varphi}(\pi_{\Sigma}(\overline{\omega\tau}))}\in\left[\alpha_{-},\alpha_{+}\right].\label{eq:limit on periodic points}
\end{equation}
For each  $x\in J(\tilde{f})$ and  $k\in\N$ there exists $\omega\in\Sigma_{A}$
such that $\pi_{\Sigma}(\omega)=x$. Let $m\leq l\le l_{0}$ and $\tau\in\Sigma_{A}^{l}$
such that $\overline{(\omega_{1}\dots\omega_{km}\tau)}\in\Sigma_{A}$.
Using the bounded distortion property of $\psi$ and $\varphi$ and
(\ref{eq:limit on periodic points}) we obtain that for large $k$, 
\begin{eqnarray*}
\frac{S_{km}\tilde{\psi}(x)}{S_{km}\tilde{\varphi}(x)}=\frac{S_{km}\psi(\omega)}{S_{km}\varphi(\omega)} 
& \ge & \frac{-S_{km}\psi(\overline{\omega_{1}\dots\omega_{km}\tau_{1}\dots\tau_{l}})-K_{\psi}}{-S_{km}\varphi(\overline{\omega_{1}\dots\omega_{km}\tau_{1}\dots\tau_{l}})+K_{\varphi}}\\ 
& \ge & \frac{-S_{km+l}\psi(\overline{\omega_{1}\dots\omega_{km}\tau_{1}\dots\tau_{l}})-K_{\psi}-l_{0}\max\left|\psi\right|}{-S_{km+l}\varphi(\overline{\omega_{1}\dots\omega_{km}\tau_{1}\dots\tau_{l}})+K_{\varphi}}\\
 & \ge & \frac{S_{km+l}\psi(\overline{\omega_{1}\dots\omega_{km}\tau_{1}\dots\tau_{l}})}{S_{km+l}\varphi(\overline{\omega_{1}\dots\omega_{km}\tau_{1}\dots\tau_{l}})}\cdot\delta(k)\ge\alpha_{-}\cdot\delta(k),
\end{eqnarray*}
where we have set 
\[
\delta(k):=\left(1-\frac{K_{\psi}+l_{0}\max\left|\psi\right|}{k\min\left|S_{m}\psi\right|}\right)\Big/\left(1+\frac{K_{\varphi}}{k\min\left|S_{m}\varphi\right|}\right).
\]
For  $k\in\N$ we have $r_{k}:=\e^{S_{km}\tilde{\varphi}(x)}<1$. Consequently, we have
\begin{eqnarray*}
\e^{S_{km}\tilde{\psi}(x)}=\left(\e^{S_{km}\tilde{\varphi}(x)}\right)^{\frac{S_{km}\tilde{\psi}(x)}{S_{km}\tilde{\varphi}(x)}} \le r_k^{\alpha_{-} \cdot \delta(k)}.
\end{eqnarray*}
Since $\delta(k)=1+O(k^{-1})$, as $k\rightarrow\infty$, and 
 ${\log(r_{k})}/{k}={S_{km}\tilde{\varphi}}/{k}$ is a  bounded sequence, we have thus shown that there exists a constant $K=K(\tilde{\varphi},\tilde{\psi})\ge1$ such that for all $x\in J(\tilde{f})$ and for all $k\in \N$, 
 \begin{equation} \label{equ:birkhoffquotients}
 \e^{S_{km}\tilde{\psi}(x)}\le K^{\alpha_-} r_k^{\alpha_-}= K^{\alpha_-} \e^{\alpha_-\cdot S_{km}\tilde{\varphi}(x)}.
 \end{equation}
Let  $C\in\T$ be non-trivial and let $z\in J(G)$ with  $z=\pi (x)$ for some $x\in J(\tilde{f})$. 
We use the notations of the proof of 
Lemma~\ref{lem:lowerbound}. 
By (\ref{eq:upperestimate}) in the proof
of Lemma \ref{lem:lowerbound} there exists $p\in\N$ and $K_{1}$
(depending only on $C$) such that, for all $k\in\N$, 
\begin{eqnarray*}
\sup_{y\in B_{km}}\left|C(y)-C(z)\right| & \le & K_{1}(km)^{p}\e^{S_{km}\tilde{\psi}(x)}.
\end{eqnarray*}
Combining with \ref{equ:birkhoffquotients} we obtain 
\[
\sup_{y\in B_{km}}\left|C(y)-C(z)\right|\le K_{1}(km)^{p} K^{\alpha_-} r_k^{\alpha_-} .
\]
Thus, since $\alpha _{-}>0$,  
for every  $\epsilon>0$ we have 
\[
\sup_{y\in B_{km}}\left|C(y)-C(z)\right|\le K_{1}K^{\alpha _{-}}(km)^{p}r_{k}^{\epsilon\alpha_{-}}r_{k}^{\alpha_{-}\cdot(1-\epsilon)}.
\]
Also, $k^{p}r_{k}^{\epsilon\alpha_{-}}\rightarrow0$ as $k\rightarrow\infty$. 
Combining the above with the Koebe distortion theorem, 
we obtain that $C$ is $\alpha_{-}\cdot(1-\epsilon)$-H\"older
continuous on $\Chat$. Note that it suffices to prove the H\"older
continuity  for $z\in J(G)$. To see this, let $z,y\in F(G)$.  Then either $y,z$ belong to the same connected component of $F(G)$, and thus $C(y)=C(z)$ by Corollary \ref{cor:varies}, or there exists $u\in J(G)$ between $y$ and $z$ and the desired H\"older continuity follows from the triangle inequality. Finally, if $C=C_{0}$ then the previous estimates hold with $p=0$
(and hence $\epsilon=0$), which implies that $C_{0}$ is $\alpha_{-}$-
$\Hol$der continuous on $\Chat .$ 
The proof of Theorem~\ref{thm:c0holder} is complete.
\end{proof} 

\section{Proof of Theorem \ref{thm:less-than-one}}
\label{section:pfl-t-o}
\vspace{-3mm} 
In this section, we give the proof of Theorem~\ref{thm:less-than-one}.
\vspace{-2mm} 
\begin{proof}
Suppose that $\alpha_{-} \ge 1$. Then by Theorem \ref{thm:c0holder}  we have that 
$C_{0}$ is a Lipschitz function on $\Chat$. Let $K$ be a minimal set of $G$ with $K\neq L$. 
By conjugating $G$ by a M\"obius transformation, we may assume that  $J(G)$ is a subset of $\C.$
Let $ABCD$ be a rectangle such that $AB$ is included in
a connected component $ U_{L}$ of $F(G)$ with $U_{L}\cap L\neq \emptyset$,
and $CD$ is included in a connected component $U_{K}$ of $F(G)$
with $U_{K}\cap K\neq \emptyset .$
Since the $2$-dimensional Lebesgue measure of $J(G)$ is zero
(actually $\dim _{H}(J(G))<2$), Fubini's theorem
implies that there exists a segment $S$ in $ABCD$ which joins
$AB$ and $CD$ such that the $1$-dimensional Lebesgue measure of $S\cap J(G)$ is zero.
Let us consider $E=C_{0}|_{S}$. Identify $S$ with $[a,b]\subset \R$ 
such that $a$ corresponds to a point in $AB\subset U_{L}$ 
and b corresponds to a point in $CD\subset U_{K}.$  Note that by the definition of $C_0$ we have that $E(a)=1$ and $E(b)=0$.
Since $E$ is  Lipschitz, it is almost everywhere differentiable
on $S$ with respect to the $1$-dimensional Lebesgue measure on $S$ and we have
$E(x)=\int _{a}^{x}E'(t) dt.$ But
$E$ is locally constant on $S\cap F(G)$, and since the 
$1$-dimensional Lebesgue measure of $S\cap J(G)$ is zero,
we have $E'(x)=0$ almost everywhere on $S$, which implies that 
$E$ is constant on $S$. This is the desired contradiction which completes the proof 
of the result $\alpha _{-}<1.$ 

We now let $\alpha \in (\alpha _{-},\min\{ \alpha _{+}, 1\}).$ 
Then Theorems~\ref{main-thm} and \ref{thm:mf-for-hoelderexponent} 
imply that there exists a Borel subset $A_{0}$ of $J(G)$ with 
$\dim _{H}(A_{0})>0$ such that 
for every $x\in A_{0}$ and for every non-trivial $C\in {\mathcal C}$, 
we have $\Hol(C,x)=\alpha .$ 
Let $A=\cup _{g\in G}g^{-1}(A_{0}).$ 
Then $\overline{A}=J(G)$ (\cite[Lemma 3.2]{MR1397693}) and  Theorem \ref{main-thm}  implies that $A$ has the desired property. 
\end{proof}

{\bf Acknowledgements.}  
The authors would like to thank Rich Stankewitz for valuable comments. 
The research of the first author was partially supported by 
the research fellowship JA 2145/1-1 of the German Research Foundation (DFG), by the JSPS postdoctoral research fellowship for foreign researchers (P14321) and by the JSPS Kakenhi 90741869. 
The research of the second author was partially supported by 
JSPS KAKENHI 24540211,  15K04899. 

\def\cprime{$'$}
\providecommand{\bysame}{\leavevmode\hbox to3em{\hrulefill}\thinspace}
\providecommand{\MR}{\relax\ifhmode\unskip\space\fi MR }
\providecommand{\MRhref}[2]{%
  \href{http://www.ams.org/mathscinet-getitem?mr=#1}{#2}
}
\providecommand{\href}[2]{#2}


\begin{thebibliography}{Sum13}
\bibitem[AK06]{AKTakagi}
 P.~Allaart and K.~Kawamura, \emph{Extreme values of some continuous nowhere differentiable functions},  
 Math. Proc. Cambridge Philos. Soc.  140  (2006),  no. 2, 
 269--295.
\bibitem[AK11]{AKsurvey} 
P.~ Allaart and K.~Kawamura, 
\emph{The Takagi function: a survey}, 
Real Anal. Exchange  37  (2011-12),  
no. 1, 1-54. 
\MR{3016850} 





\bibitem[FS91]{MR1145616}
J.~E. Forn{\ae}ss and N.~Sibony, \emph{Random iterations of rational
  functions}, Ergodic Theory Dynam. Systems \textbf{11} (1991), no.~4,
  687--708. \MR{1145616 (93c:58173)}

\bibitem[GR96]{MR1435167}
Z.~Gong and F.~Ren, \emph{A random dynamical system formed by infinitely many
  functions}, J. Fudan Univ. Nat. Sci. \textbf{35} (1996), no.~4, 387--392.
  \MR{1435167 (98k:30032)}


\bibitem[HM96]{MR1397693}
A.~Hinkkanen and G.~J. Martin, \emph{The dynamics of semigroups of rational
  functions. {I}}, Proc. London Math. Soc. (3) \textbf{73} (1996), no.~2,
  358--384. \MR{1397693 (97e:58198)}



\bibitem[JS15a]{JS13b}
J.~Jaerisch and H.~Sumi, \emph{Multifractal formalism for expanding rational
  semigroups and random complex dynamical systems}, Nonlinearity \textbf{28}
  (2015), 2913--2938.
  \bibitem[JS15b]{JStams} 
  J.~Jaerisch and H.~Sumi, 
  \emph{Dynamics of infinitely generated nicely expanding rational semigroups and the inducing method}, 
  to appear in Trans. Amer. Math. Soc., 
  http://arxiv.org/abs/1501.06772.  
\bibitem[JKPS09]{MR2576266}
T.~Jordan, M.~Kesseb{\"o}hmer, M.~Pollicott, and B.~O. Stratmann, \emph{Sets of
  nondifferentiability for conjugacies between expanding interval maps}, Fund.
  Math. \textbf{206} (2009), 161--183. \MR{2576266}


\bibitem[Kato80]{Katopert} 
T.~Kato, \emph{Perturbation Theory for Linear Operators}, Classics in Mathematics, 
Springer, 1980.   

\bibitem[KS08]{kessenondifferentiabilityminkowski}
M.~Kesseb{\"o}hmer and B.~O. Stratmann, \emph{Fractal analysis for sets of
  non-differentiability of minkowski's question mark function}, Journal of
  Number Theory \textbf{128} (2008), 2663--2686.
  
  





\bibitem[Roc70]{rockafellar-convexanalysisMR0274683}
R.~T. Rockafellar, \emph{Convex analysis}, Princeton Mathematical Series, No.
  28, Princeton University Press, Princeton, N.J., 1970. \MR{MR0274683 (43
  \#445)}

  
\bibitem[Ses01]{Sesskew}
O. Sester, {\em Combinatorial configurations of fibered polynomials}, 
Ergodic Theory Dynam. Systems 21 (2001), 915-955. 
\bibitem[Sta12]{MR2900562}
R.~Stankewitz, \emph{Density of repelling fixed points in the {J}ulia set of a
  rational or entire semigroup, {II}}, Discrete Contin. Dyn. Syst. \textbf{32}
  (2012), no.~7, 2583--2589. \MR{2900562}
\bibitem[SS11]{sspb} 
R.~Stankewitz and H.~Sumi, 
\emph{Dynamical properties and structure of Julia sets of postcritically bounded polynomial
semigroups}, Trans. Amer. Math. Soc. 363 (2011) 5293--5319.
\bibitem[Sum97]{sumihyp}
H.~Sumi, \emph{On dynamics of hyperbolic rational semigroups}, 
J. Math. Kyoto. Univ. Vol. 37, No. 4, (1997), 717-733.

\bibitem[Sum98]{MR1625124}
H.~Sumi, \emph{On {H}ausdorff dimension of {J}ulia sets of hyperbolic rational
  semigroups}, Kodai Math. J. \textbf{21} (1998), no.~1, 10--28. \MR{1625124
  (99h:30029)}


\bibitem[Sum00]{MR1767945}
H.~Sumi, \emph{Skew product maps related to finitely generated rational
  semigroups}, Nonlinearity \textbf{13} (2000), no.~4, 995--1019.
\bibitem[Sum01]{MR1827119}
H.~Sumi, \emph{Dynamics of sub-hyperbolic and semi-hyperbolic rational
  semigroups and skew products}, Ergodic Theory Dynam. Systems \textbf{21}
  (2001), no.~2, 563--603. \MR{1827119 (2002d:37073a)}

\bibitem[Sum05]{MR2153926}
H.~Sumi, \emph{Dimensions of {J}ulia sets of expanding rational semigroups},
  Kodai Math. J. \textbf{28} (2005), no.~2, 390--422. \MR{2153926
  (2006c:37050)}

\bibitem[Sum09]{s09cohom} H. Sumi, \emph
{Interaction cohomology of forward or backward self-similar systems}, 
Adv. Math., 222 (2009), no. 3, 729--781.
\bibitem[Sum10]{s11pb3} 
H.~Sumi, \emph{Dynamics of postcritically bounded polynomial semigroups III: classification of semi-hyperbolic
semigroups and random Julia sets which are Jordan curves but not quasicircles}, 
Ergodic Theory Dynam. Systems
30 (6) (2010) 1869--1902. 
\bibitem[Sum11a]{s11random}
H.~Sumi, \emph{Random complex dynamics and semigroups of holomorphic maps},
  Proc. London Math. Soc. (1) (2011), no.~102, 50--112.
\bibitem[Sum11b]{s11pb1} H. Sumi, 
\emph{Dynamics of postcritically bounded polynomial semigroups I: connected components of the Julia sets}, Discrete and Continuous Dynamical Systems Series A, Vol. 29, No. 3, 2011, 1205--1244.
\bibitem[Sum13]{S13Coop}
H.~Sumi, \emph{Cooperation principle, stability and bifurcation in random
  complex dynamics}, Adv. Math. \textbf{245} (2013), 137--181.
 \bibitem[Sumi15a]{rcddc} 
 H.~Sumi, \emph{Random complex dynamics and devil's coliseums}, 
 Nonlinearity {\bf 28} (2015) 1135--1161. 
   \bibitem[Sum15b]{twogenerator}
H.~Sumi, \emph{The space of 2-generator postcritically bounded polynomial semigroups and random complex dynamics},  
Adv. Math., 290 (2016),  809--859.  

\bibitem[SU12]{su09}
H.~Sumi and M.~Urba{\'n}ski, \emph{Bowen parameter and {H}ausdorff dimension
  for expanding rational semigroups}, Discrete Contin. Dyn. Syst. \textbf{32}
  (2012), no.~7, 2591--2606. \MR{2900563}
\bibitem[SU13]{su13} 
H.~Sumi and M.~Urba{\'n}ski, \emph{Transversality family of expanding rational semigroups}, 
Adv. Math. 234 (2013) 697-734. 

\bibitem[Wal82]{MR648108}
P.~Walters, \emph{An introduction to ergodic theory}, Graduate Texts in
  Mathematics, vol.~79, Springer-Verlag, New York, 1982. \MR{MR648108
  (84e:28017)}
  
  
 \bibitem[YHK97]{YHK}
 M. Yamaguti, M. Hata, and J. Kigami, 
 {\em Mathematics of fractals}, 
 Translations of Mathematical Monographs 167 (American Mathematical Society, Providence, RI, 1997) Translated from the 1993 Japanese
original by Kiki Hudson.




\end{thebibliography}
\end{document}